\newtheorem{lthm}{Theorem}
\newtheorem{ldef}{Definition}
\def\Z{\ensuremath {{\mathbb Z}}}
\def\cO{\ensuremath {{\mathcal O}}}
\def\cI{\ensuremath {{\mathcal I}}}
\def\scB{\ensuremath {{\mathscr{B}}}}
\def\scP{\ensuremath {{\mathscr{P}}}}
\def\scR{\ensuremath {{\mathscr{R}}}}
\def\scH{\ensuremath {{\mathscr{H}}}}
\def\scT{\ensuremath {{\mathscr{T}}}}
\def\wzeta{\ensuremath {\widetilde{\zeta}}}
\DeclareMathOperator{\val}{val}
\DeclareMathOperator{\valp}{\val_p}
\DeclareMathOperator{\valpi}{\val_{\pi}}
\DeclareMathOperator{\GL}{\mathrm{GL}}
\DeclareMathOperator{\SL}{\mathrm{SL}}
\newcommand{\defbf}[1]{\textbf{#1}}
\newtheoremstyle{upright}
  {6pt}{6pt}
  {\normalfont}
  {}{\bfseries}
  {.}{.5em}{}
\newtheorem{theorem}{Theorem}
\newtheorem*{theorem*}{Theorem}
\newtheorem{definition}{Definition}
\newtheorem{proposition}{Proposition}
\newtheorem{corollary}{Corollary}
\newtheorem{lemma}{Lemma}
\newtheorem*{conjecture*}{Conjecture}
\newtheorem{remark}{Remark}
\theoremstyle{upright}
\newtheorem{examplex}{Example}
\newenvironment{example}
  {\begin{examplex}}
  {\hfill$\square$\end{examplex}}
\journal{arXiv}
\begin{document}

\begin{frontmatter}


\title{Impacted Buildings for $\GL(2)$}



\author{Malors Espinosa}

\author{Zander Karaganis}


\begin{abstract}

In this paper we define a generating function for buildings of type $\widetilde{A}_1$ (i.e. trees) that are enhanced with a certain filtration structure. We prove that this generating function recovers the zeta function of certain quadratic orders. We do this by studying how the ideals of the orders distribute in the building of $SL(2, K)$.

\end{abstract}
\end{frontmatter}

\section{Introduction}\label{sec: Introduction}

\subsection{The goal of this article}

This paper is concerned with the way that certain zeta functions can be defined and computed. Let $K$ be a $p$-adic field and $\cO_K$ be its ring of integers. Let $L$ be a reduced $K$-algebra of dimension $2$ over $K$ and $\cO_L$ the integral closure of $\cO_K$ in $L$. In \cite{ZYun}, Yun defines for a given order $\cO\subseteq L$ a zeta function $\zeta_{\cO}$ and proves it is a rational function
\begin{equation*}
    \zeta_{\cO}(s) = \dfrac{P(q^{-s})}{V(q^{-s})},
\end{equation*}
where $q$ is the cardinality of the residue field of $K$. The polynomial $V$ is determined by $L/K$, while the polynomial $P$ is determined by $\cO$. 

Let $E$ be a quadratic extension of a number field $F$. For this extension, Langlands in \cite{LanBE04} and Altu\u{g} in \cite{AliI}, construct a formula for the evaluation of the product of certain local orbital integrals. This formula is essential for the success of the analysis being carried out. However, in the aforementioned papers, its source is \textit{ad hoc}. Arthur, in \cite{ArtStrat}, predicted that the polynomials $P$, as $K$ varies on the localization at different primes of $E$ (and $L$ being the appropriate extension according to the splitting behavior of the prime), recover the Euler factors of the formulas. Consequently, it became of interest to compute these polynomials for the quadratic case. They are computed in \cite{MEELZETA}. Then, in \cite{MEELMULT}, the explicit polynomials are used to recover the formulas, thus confirming the prediction of Arthur. 

The above process leaves two problems to be discussed. On the one hand, in \cite{ZYun} it is proven that these order zeta functions exists and continue to be rational for $L/K$ of dimension $n$. Furthermore, the corresponding polynomials $P$ are also expected to be the local factors for formulas that can play the analogous role as those of Langlands and Altu\u{g} did for the quadratic case. The method in \cite{MEELZETA} runs into complications when dealing with $n\ge 3$.

To compute the zeta functions in \cite{MEELZETA}, one verifies that in the cases of interest, they are the classical ideal counting zeta functions. Then one proves it is enough to study the contribution of the principal ideals to the zeta function. In order to quantify this contribution, the principal ideals are classified in different classes, and then the cardinality of each class is explicitly computed. For higher dimension cases, both steps are unclear for it is not evident how to organize ideals. On the other hand, the importance of these formulas arises from their relationship with orbital integrals for $\GL(n)$. When this group changes, it is unclear what the analogous zeta functions could be. 

It is tempting to approach both of these issues through the Bruhat-Tits building alone, however, such an approach has yet to be employed.
The difficulty with this approach is that these zeta functions count ideals, as opposed to rank $2$ lattices. This obscures their distribution in the building and hence hinders their counting. Nevertheless, to find such a possibility is desirable because such methods could be transferred to other groups and their Bruhat-Tits building.

The purpose of this article is to give a construction in the building of $\SL(2, K)$ (i.e. a tree) and then reinterpret and compute the zeta function $\zeta_{\cO}$ with these new definitions.

\subsection{Results and organization of this article}

Our first main definition is the following one.

\begin{ldef}
   An \defbf{impacted building} is a pair $(\scB, \scP)$, where $\scB$ is a building of type $\widetilde{A}_1$ and $\scP$ is a subcomplex of $\scB$ which we call the \defbf{basin}.  
\end{ldef}
Given an impacted building we define for $n\ge 1$,
\begin{equation*}
    \scP_n = \left\{v\in V(\scB) \mid d(x, \scP_{n-1})\le 1 \right\}.
\end{equation*}
Consequently, we have a layer structure around the basin $\scP$, by defining
\begin{equation*}
    \scR_n = \scP_n\backslash \scP_{n-1},
\end{equation*}
and $\scR_0 = \scP_0 = \scP$. 

Given a vertex $v$, we denote by  $h(v)$ as the unique $n$ for which $x\in\scR_n$. With this, we are able to provide our second main definition.
\begin{ldef}
    Let $(\scB, \scP)$ be an impacted building and let $v$ be any vertex of $\scB$. We define the \defbf{layer generating function from v} as
    \begin{equation*}
        \zeta_v^{\scR}(X) := \displaystyle\sum_{d = 0}^{\infty}\#\left\{x\in \scR_{h(v)} \mid x \mbox{ can be reached by a path of length } d \mbox{ from } v  \right\}X^d.
    \end{equation*}
\end{ldef}

The definition of impacted building shows that the building by itself is not enough to make the desired computations. A basin, and hence a particular structuring into layers, is also required. This reorganization is captured in the coefficients of the generating functions from the vertices. We will discuss these definitions and related ones, as well as some of their basic properties, in Section \ref{sec: Impacted Buildings and their Generating Functions}.

Once we are in this context, three particular choices for $\scP$ become important. A single vertex, a single edge or a whole apartment. We will call the corresponding impacted buildings the ramified, unramified and split impacted buildings and denote them $\scB_R, \scB_U, \scB_S$. In Section \ref{sec: Explicit computations of generating functions} we will compute the corresponding zeta functions for these three cases. As we will see, the computations are straightforward and do not require any knowledge from arithmetic origin. They belong to the impacted building alone. In particular, at this stage these computations recover the values of $\zeta_{\cO}$ for the quadratic case, even though we are not yet making the explicit connection between the two perspectives.

Section \ref{sec: Order polynomials for GL(2)  and their computations via ideal types} briefly discusses the results of \cite{MEELZETA}. We need these results to be able to establish the connections between the two perspectives. We keep the discussion as brief as possible and refer the reader to the original paper for proofs. In Section \ref{sec: Unit distribution and natural appearance of Impacted Buildings} we use the reviewed results to explicitly locate the ideals of the orders $\cO_n$ we care about (and define in Section \ref{sec: Order polynomials for GL(2)  and their computations via ideal types}). Concretely, we prove

\begin{lthm}
    Let $L/K$ be one of our three cases (i.e. ramified, unramified, or split), $n\ge 0$ and $\scB$ the corresponding impacted building (i.e. ramified, unramified, or split, respectively). A vertex $v$ of $\scB$ represents a (principal, rank $2$) ideal of $\cO_n$ if and only if $v\in\scR_n$. In here $\scB$ is the corresponding building.
\end{lthm}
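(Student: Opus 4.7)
The plan is to reinterpret principal rank-$2$ ideals of $\cO_n$ as the $L^\times$-orbit of the canonical vertex $[\cO_n]$ in $\scB$, and then to verify that this orbit equals $\scR_n$. Multiplication makes $L$ a $K$-subalgebra of $\mathrm{End}_K(L) \cong \Mat(2,K)$, producing an embedding $L^\times \hookrightarrow \GL(2,K)$. Under this embedding, each principal rank-$2$ fractional $\cO_n$-ideal has the form $\alpha \cO_n$ with $\alpha \in L^\times$, and the corresponding vertex of $\scB$ (a $\cO_K$-lattice class up to $K^\times$-homothety) is $\alpha \cdot [\cO_n]$. The theorem therefore reduces to the equality $L^\times \cdot [\cO_n] = \scR_n$.

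For the inclusion $L^\times \cdot [\cO_n] \subseteq \scR_n$, I would first establish $[\cO_n] \in \scR_n$ by a direct lattice computation: a $\cO_K$-basis of $\cO_L$ adapted to the case presents $\cO_n = \cO_K + \pi^n \cO_L$ as a sublattice of $\cO_L$ with elementary divisors $(1, \pi^n)$, so $d([\cO_n],[\cO_L]) = n$; combined with the explicit description of $\scP$ from Section \ref{sec: Explicit computations of generating functions}, the geodesic from $[\cO_L]$ to $[\cO_n]$ leaves $\scP$ at the first step and never returns, forcing $d([\cO_n],\scP) = n$. Next, I would check that $\scP$ is $L^\times$-stable as a subcomplex: in the split case $L^\times$ is the diagonal torus stabilizing the apartment setwise; in the non-split cases $L^\times/K^\times$ preserves the distinguished basin either by fixing it pointwise (when the relevant torus is compact) or through a finite-order symmetry generated by a uniformizer of $L$. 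Since every element of $L^\times$ preserves $\scP$, it preserves the whole layer filtration $\{\scR_m\}$, which places the entire orbit $L^\times \cdot [\cO_n]$ inside $\scR_n$.

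The reverse inclusion $\scR_n \subseteq L^\times \cdot [\cO_n]$ is the main obstacle. My preferred approach is induction on $n$: if $v \in \scR_n$ then $v$ has a neighbor $v' \in \scR_{n-1}$, which by the inductive hypothesis is of the form $\alpha \cdot [\cO_{n-1}]$ for some $\alpha \in L^\times$; after multiplying by $\alpha^{-1}$, one reduces to showing that the stabilizer of $[\cO_{n-1}]$ in $L^\times$ acts transitively on the set of neighbors of $[\cO_{n-1}]$ lying in $\scR_n$ (as opposed to $\scR_{n-2}$). This stabilizer contains $K^\times \cO_{n-1}^\times$, whose action on the $q+1$ neighbors of $[\cO_{n-1}]$ factors through $\cO_{n-1}^\times$ acting on the residue-field $\mathbb{P}^1$, and the required transitivity reduces to a concrete orbit count there. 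A second, less computational route is to bypass induction by matching the cardinality $|\scR_n|$ (read off from the tree structure of the impacted building computed in Section \ref{sec: Explicit computations of generating functions}) against the index $[L^\times : K^\times \cO_n^\times]$, which is itself an explicit computation via the principal unit filtration of $\cO_n$.
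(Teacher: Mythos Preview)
Your approach is correct and genuinely different from the paper's. The paper proves this theorem by a direct case-by-case analysis (Theorems \ref{thm: Ideals Ram Case}, \ref{thm: Ideals Unram Case}, \ref{thm: Ideals split Case}): in each case it builds explicit coset representatives $u_\alpha$ of $\cO_0^\times/\cO_n^\times$ via the slope maps of Proposition \ref{prop: the slopes map}, verifies by matrix computations that each $u_\alpha$ fixes the basin (Propositions \ref{prop: unit action in the building}, \ref{prop: unramunit action in the building}, \ref{prop: unit action split case}), and then counts to see that the lattices $u_\alpha\cO_n$ (together with $u_\alpha\pi\cO_n$ in the ramified case and their $(1,p^m)$-translates in the split case) exhaust $\scR_n$. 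Your reformulation of the problem as $L^\times\cdot[\cO_n]=\scR_n$, together with the one-line observation that $L^\times$ preserves $\scP$ setwise and hence the entire layer filtration, replaces several pages of explicit computation with a clean group-theoretic statement; the inductive transitivity argument for the reverse inclusion is likewise more uniform than the paper's separate treatments.

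Two points are worth flagging. First, a small slip: you write $\cO_n=\cO_K+\pi^n\cO_L$ with elementary divisors $(1,\pi^n)$, but the correct description is $\cO_n=\cO_K+p^n\cO_L$ with elementary divisors $(1,p^n)$; this matters in the ramified case. Second, your cardinality-matching alternative for the reverse inclusion does not apply in the split case, where $\scR_n$ is infinite, so the induction argument is genuinely needed there (as you seem to anticipate). More substantively, the paper's laborious explicit description is not wasted effort: the precise bookkeeping of which vertices represent low versus high ideals, and of which types, is exactly what is invoked later in the proof of Lemma \ref{lem: contribution is distance}, which in turn drives Theorem \ref{thm: principal zeta = vertex zeta}. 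Your argument establishes the orbit equality cleanly but does not by itself produce that finer stratification, so if you adopt it you should expect to reintroduce some of the paper's explicit analysis when you reach Section \ref{sec: Discussion and proof of the main theorem}.
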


This theorem shows that the impacted buildings that we have independently studied indeed contain among them those required for the study of the zeta functions of orders. We have mentioned above that the study of the zeta function succeeds because it can be reduced to that of the principal ideals. Instead, in the impacted building we have the basin generating functions. The theorem above is qualitatively establishing that the principal ideals are to the ideals what the layer $\scR_n$ is to $\scP_n$. 

In order to solidify that these are the same structure seen from two different views, we must prove that quantitatively they also correspond. That is, their contribution to the corresponding zeta functions coincide. That is our main theorem:

\begin{lthm}\label{thm: principal zeta = vertex zeta}
    Let $L/K$ be one of the above considered extensions (i.e. unramified, ramified, split extensions) and $\scB$ the corresponding impacted building (i.e. $\scB_U, \scB_R$, $\scB_S$). Then we have the equality
    \begin{equation*}
        \zeta_{\cO_n}^P(s) = \zeta_{\scB, \cO_n}(q^{-s}),
    \end{equation*}
    where $\zeta_{\scB, \cO_n}(X)$ is the generating function from the vertex associated to $\cO_n$ for the corresponding impacted building and $\zeta_{\cO_n}^P$ is the contribution of the principal ideals to $\zeta_{\cO_n}(s)$.
\end{lthm}

This theorem is proven in Section \ref{sec: Discussion and proof of the main theorem}. We have mentioned before that in \cite{MEELZETA} the contribution of the principal ideals is computable because one can classify them into different types and then count how many ideals of each type exist. The essential idea to prove the main theorem is to describe precisely which types are equivalent to a given vertex $v$ of $\scR_n$. We will prove that is codified by the distance $d(v, \cO_n)$. In this way, the types disappear from the picture and everything gets reorganized into the layer zeta functions.

In particular, the main theorem and the computations of Section \ref{sec: Explicit computations of generating functions}, give an alternative way to obtain the values of the polynomials $P$ we have mentioned before. The main difference lies in the fact that we \textit{do not} require to compute how many ideals are of each individual type. Instead, we count how many vertices are at a given distance within a given layer.
\subsection{Previous literature}

Similar zeta functions associated with orders were defined and studied by Solomon in \cite{Solomon1977ZetaFA}. Their properties were furthermore understood and developed in the important articles \cite{Reiner1}, \cite{Reiner3}, \cite{Reiner2} by Bushnell and Reiner. These zeta functions differ from those we discuss in this paper in that they count $\cO_K$-lattices of rank $2$ inside of $\cO_n$, while the ones defined by Yun in \cite{ZYun} count ideals. As we have mentioned, it is precisely this difference what leads to the impacted building to acquire the extra information not captured by the building alone.

On the other hand, the polynomials $P$ that we have mentioned, and which we recompute in Section \ref{sec: Explicit computations of generating functions} for the quadratic case,  were already computed in \cite{Kaneko2003} and more recently \cite{KMizuno} by different methods.

\section{Impacted Buildings and their Generating Functions}\label{sec: Impacted Buildings and their Generating Functions}

\subsection{The Generating Functions from a Vertex}

Let $\scB$ be a building with fundamental apartment of type $\widetilde{A}_1$; that is, the apartments are real lines with vertices at each integer, and the building is a tree.

We begin with our construction by introducing a simple but fundamental concept.
\begin{definition}\label{def: impacted building}
    We define an \defbf{impacted building} as a pair $(\scB, \scP)$, where $\scB$ is a building of type $\widetilde{A}_1$ and $\scP$ is a connected subcomplex of $\scB$ which we call the \defbf{basin}.
\end{definition}

Given an impacted building we construct a filtration
\begin{equation*}
    \scP = \scP_0 \subset \scP_1 \subset \scP_2 \subset \scP_3 \subset \cdots,
\end{equation*}
by declaring inductively for $n\ge 1$,
\begin{equation}\label{eqn: defining nth basin}
    \scP_n = \left\{v\in V(\scB) \mid d(x, \scP_{n-1})\le 1 \right\}.
\end{equation}
Associated with this filtration, we have an associated layer structure around the basin $\scP$, by defining
\begin{equation*}
    \scR_n = \scP_n\backslash \scP_{n-1},
\end{equation*}
and $\scR_0 = \scP_0 = \scP$. 

\begin{remark}
 The name comes from the \say{graphical} idea of a building $\scB$ being impacted by $\mathscr{P}$ creating a crater. The basin is the bottom of the crater formed and the different layers given by our filtration form a topographical map of the crater.
\end{remark}
Using this construction, we define the following concept for a vertex.
\begin{definition}\label{def: height of a vertex}
    Let $(\scB, \scP)$ be an impacted building and let $v$ be any vertex of $\scB$. We define the \defbf{height of v} to be minimal $n$ such that $v\in \scP_n$. Equivalently, the height of $v$ is the unique $n$ such that $v\in\scR_n$. We shall denote the height of $v$ by $h(v)$ or $h_{\scP}(v)$ if we wish to emphasize the basin.
\end{definition}

Using this structure, we now define two generating functions, derived solely from combinatorial data arising in the impacted buildings.

\begin{definition}\label{def: generating functions}
    Let $(\scB, \scP)$ be an impacted building and let $v$ be any vertex of $\scB$. We define the \defbf{layer generating function from v} as
    \begin{equation}\label{eq: layer GF}
        \zeta_v^{\scR}(X) := \displaystyle\sum_{d = 0}^{\infty}\#\left\{x\in \scR_{h(v)} \mid x \mbox{ can be reached by a path of length } d \mbox{ from } v  \right\}X^d.
    \end{equation}
    We also define the \defbf{basin generating function from v} as
    \begin{equation}\label{eq: basin GF}
        \zeta_v^{\scP}(X) := \displaystyle\sum_{d = 0}^{\infty}\#\left\{x\in \scP_{h(v)} \mid x \mbox{ can be reached by a path of length } d \mbox{ from } v  \right\}X^d.
    \end{equation}
    To simplify notation, we shall denote $\zeta_v^{\scR}$ by $\zeta_v$. However, we shall always write $\zeta_v^{\scP}(X)$ to denote the basin generating functions. Also, for the sake of clarity we denote the coefficients of these generating functions by
    \begin{equation*}
        r(d,v)=\#\{x\in \scR_{h(v)}: x \mbox{ can be reached by a path of length } d \mbox{ from } v  \},
    \end{equation*}
    and
    \begin{equation*}
        p(d,v)=\#\{x\in \scP_{h(v)}: x \mbox{ can be reached by a path of length } d \mbox{ from } v  \},
    \end{equation*}
\end{definition}
\begin{remark}
    We emphasize that we are not requiring the path to be nonintersecting nor to be contained entirely $\scR_{h(v)}$ or $\scP_{h(v)}$. 
    
\end{remark}

\begin{example}\label{ex: unram and ram basins}
    The Coxeter Complex of type $\widetilde{A}_1$ is the real line with vertices of two alternating types (which we show by painting them of two colors: red and blue) the type of a vertex is determined by the parity of its distance from $0$. Figure \ref{fig: impacted buildings} shows two \say{impacts} on this building.

    \begin{figure}[ht]
    \centering
        \begin{subfigure}[t]{0.45\textwidth}
            \centering
            \includegraphics[scale = 0.6]{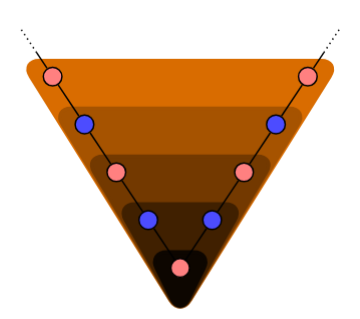}
            \subcaption{\say{Unramified} Impact}
            \label{fig: unramified Impact A2}
        \end{subfigure}
        \begin{subfigure}[t]{0.45\textwidth}
            \centering
            \includegraphics[scale = 0.6]{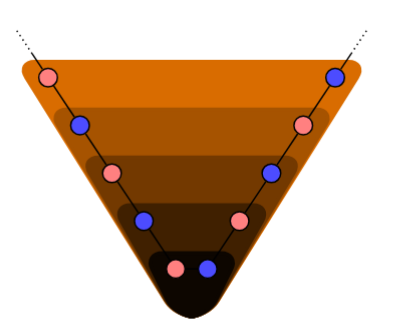}
            \subcaption{\say{Ramified} Impact}
            \label{fig: ramified Impact A2}
        \end{subfigure}
       \caption{Two examples of \protect\say{topographic} maps of impacted buildings.}
            \label{fig: impacted buildings} 
    \end{figure}

In Figure~\ref{fig: unramified Impact A2} we have $\scP$ is a single vertex, while in Figure \ref{fig: ramified Impact A2} we have $\scP$ consists of a chamber. The choice of the name \say{unramified} and \say{ramified} will become clear in Section~\ref{sec: Discussion and proof of the main theorem}. 

Let us compute some layer generating functions in the \say{unramified} case. Fix a half apartment with edge at the point on the basin and call $\cO_n$ its unique vertex in $\scR_n$. In this case, each layer $\scR_n$ consists of exactly two vertices of the same color, except for $\scR_0$ which is only $\cO_0$. In particular, from $\cO_n$ only paths with even length can reach $\scR_n$ again (since type is preserved by even distances). The distance between the two vertices of $\scR_n$ is $2n$, thus any shorter even distance can only reach $\cO_n$ itself. 

The above implies, for $n\ge 1$ that 
\begin{equation*}
    r(d, v)=
    \begin{cases} 
0 & \text{if } d \mbox{ is odd }, \\
1 & \text{if } d \mbox{ is even and } d < 2n,\\
2 & \text{if } d \mbox{ is even and } d \ge 2n,\\
\end{cases}
\end{equation*}
which in turn leads to
\begin{align*}
    \zeta_{\cO_n}(X) &
    = 1 + X^2 + \cdots + X^{2n - 2} + 2X^{2n} + 2X^{2n + 2} + \cdots\\
    &= 1 + X^2 + \cdots + X^{2n - 2} + \dfrac{2X^{2n}}{1 - X^2}\\
    &= \dfrac{1 + X^2 + \cdots + X^{2n - 2} - X^2 - \cdots - X^{2n} + 2X^{2n}}{1 - X^2}\\
    &= \dfrac{1 + X^{2n}}{1 - X^2}
\end{align*}
A similar computation, for $n = 0$, shows
\begin{equation*}
    \zeta_{\cO_0}(X) = \dfrac{1}{1 - X^2}.
\end{equation*}
We leave as an exercise to the interested reader to compute these zeta functions for the \say{ramified} case. In Example \ref{ex: rationality and basin zetas} below, we compute the answer in a slightly shorter way.
\end{example}

\subsection{The Way Out}

The constructions in the previous section only require an impacted building $(\scB, \scP)$. With this information we can construct $\zeta_v(X)$ for all $v$. We now aim to put more structure to the building by constructing a \say{bigger} generating function recursively; recall that $\scB$ is of type $\widetilde{A}_1$.

\begin{definition}\label{def: way out}
   Let $(\scB, \scP)$ be an impacted building and $\cO\in\scP$. We define a \defbf{way out (from $\cO$)} as a half apartment $\scH$ such that
   \begin{enumerate}
       \item its boundary vertex is $\cO$,
       \item for each $n = 0, 1, 2, ...$, $\scH$ there exists a unique vertex $\cO_n$ of height $h(\cO_n) = n$.
   \end{enumerate}
   We denote by $(\scB, \scP, \scH)$ a choice of impacted building with a way out.
\end{definition}
\begin{remark}
    Noitce that a way out does not necessarily exist for every $\cO\in\scP$. In all the cases we will work with it is clear that it does. 
\end{remark}

The purpose of having a way out is to chose, among all the vertices, a particular collection of generating functions from the vertices of $\scH$. That is, we have highlighted the importance of
\begin{equation*}
    \zeta_{\cO_0}, \zeta_{\cO_1}, \zeta_{\cO_2}, \zeta_{\cO_3},... 
\end{equation*}
and of
\begin{equation*}
    \zeta_{\cO_0}^{\scP}, \zeta_{\cO_1}^{\scP}, \zeta_{\cO_2}^{\scP}, \zeta_{\cO_3}^{\scP},... 
\end{equation*}

Before we can establish the fundamental relationship between the above functions defined in this way, we must discuss some simple combinatorial properties of these generating functions.

\subsection{Combinatorics, Rationality and the Recurrence Relation}

In Definition \ref{def: generating functions} the coefficient of $X^d$, in either case, requires a path that is not necessarily a geodesic path. This leads naturally to the related generating functions
\begin{align*}
    \widetilde{\zeta}_v^{\scR}(X) &:= \displaystyle\sum_{d = 0}^{\infty}\left\{x\in \scR_{h(v)} \mid x \mbox{ can be reached by a geodesic path of length } d \mbox{ from } v  \right\}X^d,\\
    \widetilde{\zeta}_v^{\scP}(X) &:= \displaystyle\sum_{d = 0}^{\infty}\left\{x\in \scP_{h(v)} \mid x \mbox{ can be reached by a geodesic path of length } d \mbox{ from } v  \right\}X^d.
\end{align*}
We shall also omit the $\scR$ from the notation and write simply $\wzeta_v$. Similar to Definition~\ref{def: generating functions}, we set $\widetilde{r}(d,v)$ to be the coefficient of $X^d$ in $\widetilde{\zeta}_v^{\scR}(X)$ and $\widetilde{p}(d,v)$ that of $X^d$ in $\widetilde{\zeta}_v^{\scP}(X)$.

\begin{proposition}\label{prop: geodesic non geodesic}
   Let $(\scB,\scP)$ be an impacted building, and $v$ a vertex of $\scB$, then
    \begin{equation*}
        \zeta_v(X) = \dfrac{\wzeta_v(X)}{1 - X^2} \quad\mbox{and}\quad  
        \zeta_v^{\scP}(X) = \dfrac{\wzeta^{\scP}_v(X)}{1 - X^2}.
    \end{equation*}
\end{proposition}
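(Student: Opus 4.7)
The plan is to exploit the tree structure of $\scB$. In a tree, between any two vertices $v$ and $x$ there is a unique geodesic, of length $\ell = d(v,x)$. Any walk from $v$ to $x$ has length of the same parity as $\ell$, because the tree is bipartite (with the two vertex types providing the bipartition), so crossing an edge always flips the parity. Hence any walk from $v$ to $x$ has length $\ell + 2k$ for some $k \ge 0$. Conversely, because $\scB$ is a thick building of type $\widetilde{A}_1$ every vertex has at least two neighbors, so given the geodesic from $v$ to $x$ we may extend it to a walk of length $\ell + 2k$ by appending $k$ back-and-forth moves along any edge incident to $x$. Therefore a walk of length $d$ from $v$ to $x$ exists \emph{if and only if} $d \ge \ell$ and $d \equiv \ell \pmod 2$.

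Now I would translate this into counts. Let $\widetilde{r}(\ell, v)$ denote, as in the statement, the number of vertices $x \in \scR_{h(v)}$ with $d(v,x) = \ell$; the equivalence above says
\begin{equation*}
    r(d, v) \;=\; \sum_{k \ge 0} \widetilde{r}(d - 2k,\, v),
\end{equation*}
with the convention $\widetilde{r}(n,v) = 0$ for $n < 0$. Multiplying by $X^d$ and summing over $d$, and setting $\ell = d - 2k$, I obtain
\begin{equation*}
    \zeta_v(X) \;=\; \sum_{d \ge 0}\sum_{k \ge 0} \widetilde{r}(d-2k, v)\, X^d \;=\; \left(\sum_{\ell \ge 0} \widetilde{r}(\ell, v) X^{\ell}\right)\!\left(\sum_{k \ge 0} X^{2k}\right) \;=\; \frac{\widetilde{\zeta}_v(X)}{1 - X^2}.
\end{equation*}
The argument is formally identical for the basin version: the only thing that changes is replacing $\scR_{h(v)}$ by $\scP_{h(v)}$ in the definition of the relevant coefficients, and this set never intervenes in the walk-extension step; it only restricts which endpoints $x$ we count. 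Thus the same manipulation yields $\zeta_v^{\scP}(X) = \widetilde{\zeta}_v^{\scP}(X)/(1-X^2)$.

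The only genuine point to verify is the padding step: I must know that \emph{every} vertex $x$ of $\scB$ has a neighbor, so that the back-and-forth extension is available. This is where the hypothesis that $\scB$ is an $\widetilde{A}_1$ building (rather than an arbitrary tree, which could have isolated vertices) is used. In all three cases of interest — $\scB_R$, $\scB_U$, $\scB_S$ — the buildings are thick and every vertex has degree at least $q+1 \ge 2$, so the argument applies without further comment. I do not anticipate a real obstacle here; the main thing to get right is simply bookkeeping the index shift $d \mapsto \ell + 2k$ and recording the parity/extension observation cleanly.
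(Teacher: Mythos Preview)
Your proof is correct and follows essentially the same approach as the paper's: both identify that $x$ is reachable from $v$ by a walk of length $d$ if and only if $d \ge d(v,x)$ and $d \equiv d(v,x) \pmod 2$, and then sum the resulting geometric series. The only cosmetic difference is that the paper organizes the double sum by first summing over $x \in \scR_n$ (writing $\wzeta_v(X) = \sum_{x} X^{d(x,v)}$ and then multiplying by $\sum_{k\ge 0} X^{2k}$), whereas you sum over $d$ first and express $r(d,v) = \sum_{k\ge 0} \widetilde r(d-2k,v)$; your extra care about the padding step (existence of a neighbor at $x$) is not strictly needed since every vertex of an $\widetilde{A}_1$ building lies on an apartment, but it does no harm.
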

\begin{proof}
    Both equalities are proven using the same method; we will consider only the first. Set $n = h(v)$; then when counting coefficients for $\widetilde{\zeta}_v(X)$, for each $x \in \scR_{n}$, it will only be counted once: exactly when $d = d(x,v)$. Hence, we have that 
    $$\widetilde{\zeta}_v(X) = \sum_{x\in \scR_n} X^{d(x, v)}.$$
    By our previous comment, we notice that when $d \in d(x,v) + 2\Z_{\geq 0}$, there is a path of length $d$ connecting $x$ and $v$. This shows that
    \begin{equation*}
        \zeta_v(X) = \displaystyle\sum_{x\in\scR_n}\sum_{\ell\in 2\Z_{\geq 0}}X^{d(x,v) + \ell} = \dfrac{1}{1 - X^2}\displaystyle\sum_{x\in\scR_n}X^{d(x, v)} = \dfrac{\wzeta_v(X)}{1 - X^2},
    \end{equation*} concluding the proof.
\end{proof}

\begin{corollary}\label{cor: rationality finite basin}
    Let $(\scB, \scP)$ be an impacted building with $\scP$ finite, then $\zeta_v(X)$ and $\zeta_{v}^{\scP}(X)$ are rational functions.
\end{corollary}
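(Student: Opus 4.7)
The plan is to reduce to the geodesic generating functions via Proposition~\ref{prop: geodesic non geodesic} and then to show that, under the hypothesis that $\scP$ is finite, both $\wzeta_v(X)$ and $\wzeta_v^{\scP}(X)$ are in fact \emph{polynomials}. Since $(1-X^2)^{-1}$ times a polynomial is a rational function, rationality of $\zeta_v$ and $\zeta_v^{\scP}$ will follow immediately.

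The key step is to establish that $\scP_{h(v)}$, and hence $\scR_{h(v)}$, is a finite set of vertices. This uses the fact that the buildings of type $\widetilde{A}_1$ that appear here are locally finite trees (in the applications, every vertex has valence $q+1$, where $q$ is the residue cardinality of the underlying field; more generally, we assume throughout that $\scB$ is locally finite, as is standard for Bruhat--Tits trees). Granted this, I would proceed by induction on $n$ to show that each $\scP_n$ is finite. The base case is $\scP_0 = \scP$, finite by hypothesis. For the inductive step, by \eqref{eqn: defining nth basin},
\begin{equation*}
    \scP_n = \scP_{n-1} \cup \bigcup_{u\in \scP_{n-1}} \{w \in V(\scB) : d(w,u) \le 1\},
\end{equation*}
which is a finite union of finite sets by local finiteness, hence finite. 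Consequently $\scR_n = \scP_n \setminus \scP_{n-1}$ is also finite for every $n$.

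With this in hand, set $n = h(v)$. By the formula established in the proof of Proposition~\ref{prop: geodesic non geodesic},
\begin{equation*}
    \wzeta_v(X) = \sum_{x \in \scR_n} X^{d(x,v)}, \qquad \wzeta_v^{\scP}(X) = \sum_{x \in \scP_n} X^{d(x,v)},
\end{equation*}
and since both index sets are finite, these are polynomials in $X$. Invoking Proposition~\ref{prop: geodesic non geodesic} once more then gives
\begin{equation*}
    \zeta_v(X) = \frac{\wzeta_v(X)}{1 - X^2}, \qquad \zeta_v^{\scP}(X) = \frac{\wzeta_v^{\scP}(X)}{1 - X^2},
\end{equation*}
each a polynomial divided by $1-X^2$, hence rational. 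The main (minor) obstacle is really just making local finiteness of $\scB$ explicit; once that is granted, the inductive finiteness argument and Proposition~\ref{prop: geodesic non geodesic} do all the work, and no further combinatorial input is required.
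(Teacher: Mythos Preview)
Your proof is correct and follows essentially the same route as the paper: both reduce via Proposition~\ref{prop: geodesic non geodesic} to showing that $\wzeta_v$ and $\wzeta_v^{\scP}$ are polynomials, which in turn rests on the finiteness of each $\scP_n$. The paper phrases the polynomial conclusion via a diameter bound rather than the explicit finite-sum formula, and it silently assumes local finiteness where you make it explicit, but these are cosmetic differences rather than a different argument.
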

\begin{proof}
    If $\scP$ is finite, then every $\scP_n$ is finite. In particular, each of them has a finite diameter. This implies that $\wzeta_v$ and $\wzeta_v^{\scP}$ are polynomials because the coefficient of $X^d$ vanishes for $d$ greater than the diameter. This concludes the proof.
\end{proof}
\begin{example}\label{ex: rationality and basin zetas}
   Let us return to the constructions of Example \ref{ex: unram and ram basins}, to highlight how Corollary~\ref{cor: rationality finite basin} applies to save computational effort. In the \say{unramified} impact, we have seen that for $n\ge 1$ there are two vertices of the same color in $\scP_n$. One is at distance $0$ from $\cO_n$. The other is at distance $2n$. Thus
   \begin{equation*}
       \wzeta_{\cO_n} = 1 + X^{2n}.
   \end{equation*}
   Corollary \ref{cor: rationality finite basin} implies
   \begin{equation*}
       \zeta_{\cO_n} = \dfrac{1 + X^{2n}}{1 - X^2},
   \end{equation*}
   agreeing with our previous work. We also know that there is exactly one vertex at each of the distances $0, 1, 2,..., 2n$ in $\scP_n$. Thus
   \begin{equation*}
       \wzeta_{\cO_n}^{\scP}(X) = 1 + X + X^2 + \cdots + X^{2n} = \dfrac{1 - X^{2n + 1}}{1 - X}.
   \end{equation*}
   As a consequence, we get
   \begin{equation*}
       \zeta_{\cO_n}^{\scP}(X) = \dfrac{1 + X + X^2 + \cdots + X^{2n}}{1 - X^2} = \dfrac{1 - X^{2n + 1}}{(1 - X)(1 - X^2)}.
   \end{equation*}
   In the \say{ramified} impact, we have that from $\cO_n$ there is exactly a vertex at distance $0, 1, ..., 2n + 1$ in $\scP_n$. Thus
   \begin{equation*}
       \zeta_{\cO_n}^{\scP}(X) = \dfrac{1 + X + \cdots +X^{2n + 1}}{1 - X^2} = \dfrac{(1 + X)(1 + X^2 + \cdots +X^{2n})}{1 - X^2} = \dfrac{1 + X^2 + \cdots +X^{2n}}{1 - X}.
   \end{equation*}
\end{example}

The functions $\zeta_{\cO_n}$ also admit a recursive formula.

\begin{theorem}\label{thm: recurrence relation genfun}
    Let $(\scB, \scP, \scH)$ be an impacted building with way out, then for $n\ge 0$ we have
    \begin{equation*}
        \zeta_{\cO_{n+1}}^{\scP}(X) = \zeta_{\cO_{n+1}}(X) + X\zeta_{\cO_{n}}^{\scP}(X).
    \end{equation*}
\end{theorem}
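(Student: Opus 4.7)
The plan is to invoke Proposition \ref{prop: geodesic non geodesic} to reduce to the analogous identity for the geodesic generating functions. Multiplying the desired recurrence by $1 - X^2$, I would first show the equivalent statement
\begin{equation*}
    \wzeta_{\cO_{n+1}}^{\scP}(X) = \wzeta_{\cO_{n+1}}(X) + X\,\wzeta_{\cO_n}^{\scP}(X),
\end{equation*}
where the tilded series count geodesic paths. Since $\scP_{n+1} = \scR_{n+1} \sqcup \scP_n$, the left-hand side breaks into two pieces: vertices $x \in \scR_{n+1}$, which by definition contribute exactly $\wzeta_{\cO_{n+1}}(X)$, and vertices $x \in \scP_n$, which should match the second term. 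So the whole theorem reduces to proving the distance identity $d(\cO_{n+1}, x) = 1 + d(\cO_n, x)$ for every $x \in \scP_n$.

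The key step I would carry out is to establish this distance identity via the following tree-geometric lemma: for any $x \in \scP_n$, the unique geodesic in $\scB$ from $\cO_{n+1}$ to $x$ passes through $\cO_n$. Two preliminary observations feed into it. First, I would verify by induction on $n$ that each $\scP_n$ is a connected subcomplex: $\scP = \scP_0$ is connected by Definition \ref{def: impacted building}, and each vertex appearing in $\scR_{n+1}$ is attached to $\scP_n$ by an edge, preserving connectedness. Second, I would show that every $v \in \scR_{n+1}$ has a unique neighbor in $\scP_n$: existence comes from the defining inequality $d(v, \scP_n) \le 1$, and uniqueness comes from the tree property, because two neighbors $u_1, u_2 \in \scP_n$ of $v$ together with the path in the connected subtree $\scP_n$ from $u_1$ to $u_2$ would close up into a cycle in $\scB$. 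Specializing to $v = \cO_{n+1}$, the way-out condition supplies $\cO_n$ as a neighbor of $\cO_{n+1}$ lying in $\scP_n$ (they are consecutive vertices of the half-apartment $\scH$), so this unique neighbor must be $\cO_n$.

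With the lemma in hand, the geodesic from $\cO_{n+1}$ to any $x \in \scP_n$ factors as the edge $\cO_{n+1}\cO_n$ followed by the geodesic from $\cO_n$ to $x$ inside the subtree $\scP_n$, so $d(\cO_{n+1}, x) = 1 + d(\cO_n, x)$ as claimed. Summing $X^{d(\cO_{n+1}, x)} = X \cdot X^{d(\cO_n, x)}$ over $x \in \scP_n$ yields the missing identification, combining pieces produces the tilded recurrence, and dividing through by $1 - X^2$ (once more using Proposition \ref{prop: geodesic non geodesic}) returns the statement of the theorem. The main obstacle, or rather the only genuinely non-formal step, is the tree-geometric lemma; once connectedness of $\scP_n$ and uniqueness of neighbors in $\scP_n$ are pinned down, everything else is routine bookkeeping.
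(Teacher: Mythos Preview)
Your proposal is correct and follows essentially the same approach as the paper's proof: both reduce via Proposition \ref{prop: geodesic non geodesic} to the tilded identity, split $\scP_{n+1} = \scR_{n+1} \sqcup \scP_n$, and argue that the geodesic from $\cO_{n+1}$ to any $x \in \scP_n$ must pass through $\cO_n$. Your version is actually more careful than the paper's, which simply asserts that ``the only way from $\cO_{n+1}$ to enter $\scP_n$ is through $\cO_n$'' without spelling out the connectedness of $\scP_n$ or the unique-neighbor argument you give.
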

\begin{proof}
    Using Proposition \ref{prop: geodesic non geodesic}, we see it is enough to prove
   \begin{equation*}
       \wzeta_{\cO_{n+1}}^{\scP}(X) = \wzeta_{\cO_{n+1}}(X) + X\wzeta_{\cO_{n}}^{\scP}(X).
   \end{equation*}
   Let $d\geq 1$ be given; comparing coefficients of $X^d$ and unraveling the definition of $\widetilde{\zeta}_{v}(X)$ yields that an equivalent formulation of this is 
   $$\widetilde{p}(\cO_{n+1}, d) = \widetilde{r}(\cO_{n+1}, d) + \widetilde{p}(\cO_{n}, d-1).$$
   We now notice that $\scP_{n+1} = \scR_{n+1} \sqcup \scP_{n}$. Because we are counting geodesics and $\scB$ is a tree, the only way from $\cO_{n+1}$ to enter $\scP_n$ is through $\cO_n$. Moreover, such paths must immediately go from $\cO_{n+1}$ to $\cO_n$ to prevent self-intersection. This defines a bijection between paths contributing to $\widetilde{p}(\cO_{n+1}, d)$ not contributing to $\widetilde{r}(\cO_{n+1}, d)$ and paths contributing to $\widetilde{p}(\cO_n, d-1)$, yielding our result.
\end{proof}
\section{Explicit computations of generating functions}\label{sec: Explicit computations of generating functions}

The purpose of this section is to study three families of impacted buildings that arise naturally in the study of the zeta functions of orders discussed in the introduction. We call them the \say{unramified,} \say{ramified,} and \say{split} cases due to their connection with the zeta functions of orders. Specifically, the zeta functions of each of these cases corresponds to an impacted building of the corresponding family. In this section, we continue to work exclusively with the buildings without the appearance of arithmetic. 

\begin{definition}
    Let $m\ge 2$ be a positive integer. We denote by $\scT_m$ the homogeneous tree of degree $m + 1$.
\end{definition}

For each of the above trees we now define an impacted building with way out structure. We describe the basins first.

\begin{definition}\label{def: impacted buildings cases defs}
    Let $m\ge 2$ be a positive integer. We define the following basins:
    \begin{enumerate}
        \item The unramified basin $\scP_U$ consists of a single vertex.
        \item The ramified basin $\scP_R$ consists of a single edge.
        \item The split basin $\scP_S$ consists of a single apartment.
    \end{enumerate}
    We denote the impacted building $(\scT_m, \scP_U)$, $(\scT_m, \scP_R)$ and $(\scT_m, \scP_S)$ by $\scB_U(m)$, $\scB_R(m)$ and $\scB_S(m)$, respectively. We call them the unramified, ramified and split impacted buildings of degree $m + 1$. We will sometimes omit the word \say{impacted} as there will be no possibility of confusion. When we do not need to specify a particular case we will drop the subindex.
\end{definition}

In each of the above cases, we can fix a vertex $\cO\in\scP$ and follow a half apartment from $\cO$ away from $\scP$. Notice, we cannot do this if $m = 1$ in the split case, which explains our choice of $m\ge 2$. For the ramified and unramified case, we can extend our definitions. We have previously seen these constructions in Examples~\ref{ex: unram and ram basins} and~\ref{ex: rationality and basin zetas}. In what follows, we will fix a way out once and for all, as it will become clear that the results that we obtain do not depend on this choice. The corresponding impacted buildings may be seen in figure~\ref{basinsInEachCase}.

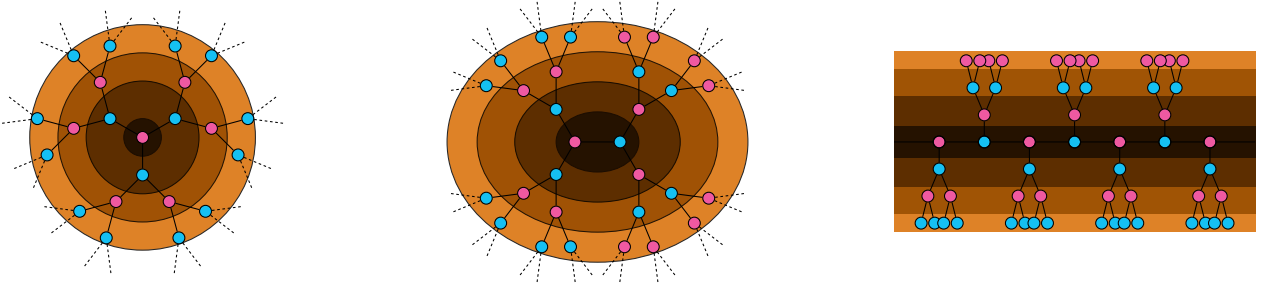
\begin{figure}[h]
\centering
\begin{minipage}{0.33\textwidth}
\begin{tikzpicture}[scale = 0.5,
    grow cyclic,
    level distance=1cm,
    level 1/.style={sibling angle=120},
    level 2/.style={sibling angle=90},
    level 3/.style={sibling angle=60},
    level 4/.style={sibling angle=45, dash pattern=on 1pt off 1pt},
    nodes={circle, draw, inner sep=0pt, minimum size=4.5pt} 
  ]

\draw[fill={rgb, 255:red, 217; green,108; blue,0}, opacity=0.85] (0,0) circle(3cm);

\draw[fill={rgb, 255:red, 149; green,74; blue,0}, opacity=0.85] (0,0) circle(2.25cm);

\draw[fill={rgb, 255:red, 81; green,40; blue,0}, opacity=0.85] (0,0) circle(1.5cm);

\draw[fill={rgb, 255:red, 13; green,6; blue,0}, opacity=0.7 ] (0,0) circle(0.5cm);

\path[rotate=30]
  node[fill=magenta!80] {}   child foreach \cntI in {1,...,3} {    node[fill=cyan!70] {}    child foreach \cntII in {1,...,2} {       node[fill=magenta!80] {}      child foreach \cntIII in {1,...,2} {        node[fill=cyan!70] {}        child foreach \cntIV in {1,...,2} {}}}};

\end{tikzpicture}
\end{minipage}
\begin{minipage}{0.33\textwidth}
\begin{tikzpicture}[scale=0.5,
    grow cyclic,
    level distance=1cm,
    level 1/.style={sibling angle=120},
    level 2/.style={sibling angle=60},
    level 3/.style={sibling angle=45},
    level 4/.style={sibling angle=30, dash pattern=on 1pt off 1pt},
    nodes={circle, draw, inner sep=0pt, minimum size=4.5pt} 
  ]
  \draw[fill={rgb, 255:red, 217; green,108; blue,0}, opacity=0.85] (0, 0) ellipse (4cm and 3.2cm);
  
  \draw[fill={rgb, 255:red, 149; green,74; blue,0}, opacity=0.85] (0, 0) ellipse (3.2cm and 2.4cm);

  \draw[fill={rgb, 255:red, 81; green,40; blue,0}, opacity=0.85] (0, 0) ellipse (2.2cm and 1.6cm);

  \draw[fill={rgb, 255:red, 13; green,6; blue,0}, opacity=0.7] (0, 0) ellipse (1.1cm and 0.8cm);

\draw[black] (-0.6, 0) -- (0.6, 0);
  \path[shift = {(0.6,0)}, rotate=0]
node[fill=cyan!70] {}   child foreach \cntI in {1,...,2} {    node[fill=magenta!80] {}    child foreach \cntII in {1,...,2} {       node[fill=cyan!70] {}      child foreach \cntIII in {1,...,2} {        node[fill=magenta!80] {}        child foreach \cntIV in {1,...,2} {}}}};

\path[shift = {(-0.6,0)}, rotate=-180]
node[fill=magenta!80] {}   child foreach \cntI in {1,...,2} {    node[fill=cyan!70] {}    child foreach \cntII in {1,...,2} {       node[fill=magenta!80] {}      child foreach \cntIII in {1,...,2} {        node[fill=cyan!70] {}        child foreach \cntIV in {1,...,2} {}}}};

\end{tikzpicture}
\end{minipage}
\begin{minipage}{0.3\textwidth}
\begin{tikzpicture}[scale = 0.3,
  every node/.style={circle, draw, inner sep=0pt, minimum size=4.5pt},
  level distance=12mm,
  level 1/.style={sibling distance=14mm},
  level 2/.style={sibling distance=10mm},
  level 3/.style={sibling distance=6mm},
  level 4/.style={
    level distance=6mm,
    edge from parent/.style={dash pattern=on 1pt off 1pt}
  }
]

\fill[fill={rgb, 255:red, 217; green,108; blue,0}, opacity=0.85] (-8,-40mm) rectangle (8,40mm);
\fill[fill={rgb, 255:red, 149; green,76; blue,0}, opacity=0.85] (-8,-32mm) rectangle (8,32mm);
\fill[fill={rgb, 255:red, 81; green,40; blue,0}, opacity=0.85] (-8,-20mm) rectangle (8,20mm);
\fill[fill={rgb, 255:red, 13; green,6; blue,0}, opacity=0.7] (-8,-7mm) rectangle (8,7mm);

\draw (-8,0) -- (8,0);

\foreach \i in {-3,...,3} {
  \ifodd\numexpr\i+3\relax
    \node[fill=cyan!70] (root\i) at (\i*2,0) {};
  \else
    \node[fill=magenta!80] (root\i) at (\i*2,0) {};
  \fi
}

\foreach \i/\dir in {-3/down, -2/up, -1/down, 0/up, 1/down, 2/up, 3/down} {

  \ifodd\i
    \tikzset{
      level 1/.append style={fill=cyan!70},
      level 2/.append style={fill=magenta!80},
      level 3/.append style={fill=cyan!70}
    }
  \else
    \tikzset{
      level 1/.append style={fill=magenta!80},
      level 2/.append style={fill=cyan!70},
      level 3/.append style={fill=magenta!80}
    }
  \fi

  \begin{scope}[shift={(root\i)}]
    \node {}
      child[grow=\dir] {
        node[level 1] {}
        child {
          node[level 2] {}
          child {
            node[level 3] {}
            child[level 4] { node[draw=none, minimum size=0pt] {} }
            child[level 4] { node[draw=none, minimum size=0pt] {} }
          }
          child {
            node[level 3] {}
            child[level 4] { node[draw=none, minimum size=0pt] {} }
            child[level 4] { node[draw=none, minimum size=0pt] {} }
          }
        }
        child {
          node[level 2] {}
          child {
            node[level 3] {}
            child[level 4] { node[draw=none, minimum size=0pt] {} }
            child[level 4] { node[draw=none, minimum size=0pt] {} }
          }
          child {
            node[level 3] {}
            child[level 4] { node[draw=none, minimum size=0pt] {} }
            child[level 4] { node[draw=none, minimum size=0pt] {} }
          }
        }
      };
  \end{scope}
}
\end{tikzpicture}
\end{minipage}
\caption{The impacted buildings in the unramified, ramified, and split case from left to right.}
\label{basinsInEachCase}
\end{figure}



\subsection{The unramified case}

In this case, we think of $\scB_U(m)$ as a tree with a single root $\cO_0$. One immediately sees that $\scP_n$ consists of all vertices at distance at most $n$ from $\cO_0$. In particular, the vertices at distance exactly $n$ are those vertices in $\scR_n$. All of them will be of the same type and the maximum distance between any such vertices is $2n$.

\begin{lemma}\label{lem: unram countOfVerts}
    Let $v\neq \cO_0$ be any vertex of $\scB_U(m)$, then 
    \begin{equation*}
        \mid\!\scR_{h(v)}\!\mid = (m+1)m^{h(v)-1}
    \end{equation*}
\end{lemma}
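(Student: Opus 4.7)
The plan is a direct induction on $n = h(v) \geq 1$, exploiting the fact that $\scT_m$ is a tree rooted (conceptually) at the single basin vertex $\cO_0$, so $\scR_n$ is exactly the sphere of radius $n$ about $\cO_0$. For the base case $n=1$, I would simply invoke the definition of $\scT_m$: every vertex has $m+1$ neighbors, so $|\scR_1|=m+1=(m+1)m^{0}$, matching the claimed formula.

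For the inductive step, I would establish the recurrence $|\scR_n| = m\cdot|\scR_{n-1}|$ for $n\geq 2$. The key observation is that since $\scT_m$ is a tree, each vertex $w$ of $\scR_{n-1}$ lies on a unique geodesic back to $\cO_0$, and hence has exactly one neighbor in $\scR_{n-2}$ and zero neighbors in $\scR_{n-1}$ itself; its remaining $m$ neighbors therefore all lie in $\scR_n$. Conversely, by construction in \eqref{eqn: defining nth basin}, every vertex of $\scR_n$ has at least one neighbor in $\scP_{n-1}$, and again by the tree property exactly one such neighbor, which must lie in $\scR_{n-1}$ (it cannot lie in $\scP_{n-2}$, else the vertex would already be in $\scP_{n-1}$). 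This yields a bijection between $\scR_n$ and pairs $(w,w')$ with $w\in\scR_{n-1}$ and $w'$ one of the $m$ neighbors of $w$ not lying on the geodesic back to $\cO_0$, proving $|\scR_n|=m\cdot|\scR_{n-1}|$.

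Combining the base case with the recurrence gives $|\scR_n|=(m+1)m^{n-1}$ by immediate induction. There is essentially no hard step here; the only thing to be careful about is the edge case $n=1$, where $\scR_{n-2}=\scR_{-1}$ is vacuous and one must directly say that every neighbor of $\cO_0$ contributes to $\scR_1$ (giving $m+1$ rather than $m\cdot|\scR_0|=m$). Once $n\geq 2$ the tree structure makes the count completely mechanical.
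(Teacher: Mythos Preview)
Your proposal is correct and is essentially the same argument as the paper's: both identify $\scR_n$ with the sphere of radius $n$ about $\cO_0$ in the regular tree $\scT_m$ and count it by noting that $\cO_0$ has $m+1$ neighbors and every subsequent vertex contributes $m$ new ones. The only difference is presentational---you phrase it as a formal induction with an explicit recurrence $|\scR_n|=m\,|\scR_{n-1}|$, whereas the paper simply states the branch count directly.
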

\begin{proof}
    In this case, $\scR_{h(v)}$ consists of the vertices at distance $h(v)$ from $\cO_0$. By this, we count $m + 1$ branches out of $\cO_0$. Moreover, for the next layer, each vertex adds $m$ new vertices per branch. Thus, per possible branch, we get  $m^{h(v)-1}$ vertices in the $h(v)$-layer; the result follows.  
\end{proof}

\begin{proposition}\label{prop: UnramifiedCoeffs}
    Let $v\neq \cO_0$ be a vertex in $\scB_U(m)$. The values $r(d,v)$ are given by 
    \begin{equation*}
        r(d,v) = \begin{cases}
        0 & \text{if $d$ is odd,}\\
        m^{d/2} & \text{if }d < 2h(v),\, \text{and $d$ is even,}\\
        (m+1)m^{h(v)-1} & \text{if } d\geq 2h(v),\, \text{and $d$ is even.}
    \end{cases}
    \end{equation*}
\end{proposition}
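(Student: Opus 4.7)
The plan is to pass through the geodesic-counting generating function $\widetilde\zeta_v$ via Proposition~\ref{prop: geodesic non geodesic}, compute $\widetilde r(2j,v)$ directly by a tree-geometric argument, and then extract $r(d,v)$ from the resulting convolution with $1/(1-X^2)$.

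First, since $\scT_m$ is bipartite and every vertex of $\scR_{h(v)}$ lies at distance $n=h(v)$ from $\cO_0$, all such vertices belong to the same color class as $v$. Hence any walk joining two of them has even length, and in particular $r(d,v)=0$ for odd $d$. This disposes of the first case of the proposition.

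For even $d$, I would fix the unique geodesic $\cO_0=u_0,u_1,\ldots,u_n=v$ (provided by the way-out structure) and view $\scB_U(m)$ as rooted at $\cO_0$. For each $x\in\scR_n$ the least common ancestor of $v$ and $x$ lies on this geodesic, say at $u_{n-j}$, and one reads off $d(v,x)=2j$ with $0\leq j\leq n$. The count $\widetilde r(2j,v)$ then splits into three subcases: for $j=0$ only $x=v$ contributes, giving $1$; for $1\leq j\leq n-1$ the vertex $u_{n-j}$ has $m$ children, of which $m-1$ branch away from the geodesic, and each such branch contains $m^{j-1}$ descendants at depth $n$, so $\widetilde r(2j,v)=(m-1)m^{j-1}$; for $j=n$ the root $\cO_0$ has $m+1$ children, $m$ of them off-path, each contributing $m^{n-1}$ depth-$n$ descendants, giving $\widetilde r(2n,v)=m^n$.

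Combining these via Proposition~\ref{prop: geodesic non geodesic} yields $r(d,v)=\sum_{j=0}^{d/2}\widetilde r(2j,v)$ for even $d$. When $d<2n$, so $d/2\leq n-1$, the sum telescopes as
\[
r(d,v)=1+(m-1)\sum_{j=1}^{d/2}m^{j-1}=1+(m^{d/2}-1)=m^{d/2}.
\]
When $d\geq 2n$ the sum also absorbs $\widetilde r(2n,v)=m^n$ and stabilizes at $1+(m^{n-1}-1)+m^n=(m+1)m^{n-1}$, which agrees with $|\scR_n|$ as computed in Lemma~\ref{lem: unram countOfVerts}.

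The only delicate bookkeeping is the root-degree mismatch: $\cO_0$ has $m+1$ children while every other vertex has $m$. This is what forces the separate treatment of $j=n$ and is precisely the boundary input that makes the two regimes collapse to the advertised closed forms; everything else reduces to a geometric-series identity.
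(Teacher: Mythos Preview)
Your proof is correct, and it takes a genuinely different route from the paper's own argument. The paper argues directly about $r(d,v)$: for even $d\ge 2h(v)$ every vertex of $\scR_{h(v)}$ is reachable (so Lemma~\ref{lem: unram countOfVerts} finishes it), while for even $d<2h(v)$ the paper identifies which subtree from $\cO_0$ is out of reach and counts the remaining depth-$n$ vertices directly. By contrast, you factor through Proposition~\ref{prop: geodesic non geodesic}, compute the geodesic coefficients $\widetilde r(2j,v)$ via the least-common-ancestor decomposition, and then recover $r(d,v)$ as the partial sum $\sum_{j\le d/2}\widetilde r(2j,v)$, which telescopes cleanly. Your approach has the virtue of making the geometric-series structure explicit and of separating the ``root has degree $m+1$'' anomaly into a single boundary term $\widetilde r(2n,v)=m^n$; the paper's argument is shorter but its treatment of the middle case is rather compressed.

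One small remark: the geodesic $\cO_0=u_0,\ldots,u_n=v$ exists and is unique simply because $\scT_m$ is a tree; the way-out structure is not what provides it, and indeed the statement applies to an arbitrary $v\neq\cO_0$, not only to the vertices $\cO_n$ on a fixed way out. This does not affect the argument.
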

\begin{proof}
    Because all the vertices in $\scR_{h(v)}$ have the same color, they must be at even distance one from each other. This proves the statement for $d$ odd. On the other hand, when $d\geq 2h(v)$ is even, every vertex in $\scR_{h(v)}$ can be reached. Thus Lemma \ref{lem: unram countOfVerts} implies the result in this case.

    We proceed now to when $d<2h(v)$ and even; write $d=2k$. Then, there exists a unique branch of $\scB_{U}$ whose intersection with $\scR_{h(v)}$ is at distance exactly $2h(v)$ from $v.$ By the above logic, we may reach every vertex of distance $v-k$ from $k$, except those on the one branch defined by this unique point. This proves the theorem, applying Lemma~\ref{lem: unram countOfVerts} once more.
\end{proof}

We may now compute the basin generating functions associated to our way out. 

\begin{proposition}\label{prop: unram gen fun}
    In the unramified case, we have
    \begin{equation*}
        \zeta_{\cO_n}(X) = \dfrac{1 + (m-1)X^2 + m(m-1)X^4 + \cdots + m^{n-2}(m-1)X^{2(n-1)}+m^{n}X^{2n}}{1-X^2}.
    \end{equation*}
    and
    \begin{equation*}
        \zeta_{\cO_n}^{\scP}(X) = \dfrac{1+ X + m(X^{2}+ X^{3}) + \cdots + m^{n}(X^{2n-2}+ X^{2n-1}) + m^nX^{2n}}{1 - X^2}.
    \end{equation*}
\end{proposition}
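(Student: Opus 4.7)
My plan is to obtain the first identity by direct summation from Proposition \ref{prop: UnramifiedCoeffs}, and then bootstrap to the second identity by induction on $n$ using the recurrence in Theorem \ref{thm: recurrence relation genfun}.

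For $\zeta_{\cO_n}(X)$, inserting the piecewise values of $r(d,\cO_n)$ from Proposition \ref{prop: UnramifiedCoeffs} gives
\[
\zeta_{\cO_n}(X) \;=\; \sum_{k=0}^{n-1} m^k X^{2k} \;+\; (m+1)m^{n-1}\sum_{k \geq n} X^{2k}.
\]
The tail is the geometric series $(m+1)m^{n-1}X^{2n}/(1-X^2)$. Placing both summands over the common denominator $1-X^2$ and collecting like powers, the internal differences $m^k - m^{k-1} = m^{k-1}(m-1)$ reproduce the middle coefficients of the claimed numerator, while the jump $(m+1)m^{n-1} - m^{n-1} = m^n$ at the threshold $d = 2n$ produces the leading term $m^nX^{2n}$. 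This recovers the stated expression for $\zeta_{\cO_n}(X)$.

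For $\zeta_{\cO_n}^{\scP}$, I induct on $n$. The base case $n = 0$ is immediate since $\scP_0 = \scR_0 = \{\cO_0\}$ forces $\zeta_{\cO_0}^{\scP}(X) = 1/(1-X^2)$, which matches the proposed formula when the paired-block sum is empty. For the inductive step, Theorem \ref{thm: recurrence relation genfun} yields
\[
\zeta_{\cO_{n+1}}^{\scP}(X) \;=\; \zeta_{\cO_{n+1}}(X) \;+\; X\,\zeta_{\cO_n}^{\scP}(X),
\]
and since both summands on the right share denominator $1-X^2$, the problem reduces to a polynomial identity between numerators. Multiplying the inductive-hypothesis numerator by $X$ shifts each paired block $m^k(X^{2k}+X^{2k+1})$ to $m^k(X^{2k+1}+X^{2k+2})$; adding the numerator of $\zeta_{\cO_{n+1}}$ (already known from the first part) and applying the identity $m^{k-1} + m^{k-1}(m-1) = m^k$ at each even power reproduces the desired paired pattern for $n+1$.

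The main, though routine, obstacle is index bookkeeping at three boundary positions when matching coefficients: the constant term, the junction where the old leading $m^nX^{2n}$ gets absorbed into the new paired block $m^n(X^{2n}+X^{2n+1})$ (using exactly the same $m^{k-1}+m^{k-1}(m-1)=m^k$ collapse), and the new top term $m^{n+1}X^{2n+2}$, which comes entirely from $\zeta_{\cO_{n+1}}$. Once these three alignments are checked, the induction closes and both formulas are established.
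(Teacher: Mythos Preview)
Your proof is correct and follows essentially the same approach as the paper. For $\zeta_{\cO_n}$ both you and the paper insert the values from Proposition~\ref{prop: UnramifiedCoeffs}, sum the geometric tail, and clear the denominator; for $\zeta_{\cO_n}^{\scP}$ the paper unrolls the recurrence of Theorem~\ref{thm: recurrence relation genfun} into the closed form $\zeta_{\cO_n}^{\scP}=\sum_{i=0}^{n}X^{i}\zeta_{\cO_{n-i}}$ and then collects coefficients, whereas you carry out the same recurrence one step at a time by induction---a purely presentational difference.
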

\begin{proof}
 By Proposition \ref{prop: UnramifiedCoeffs},  
\begin{align*}
    \zeta_{\cO_{h(v)}} &= 1 + mX^2 + m^2X^4 + \cdots + m^{n-1}X^{2n-2} + (m+1)m^{n-1}X^{2n}(1 + X^2 + X^4 + \cdots )\\
    &= 1 + mX^2 + m^2X^4 + \cdots + m^{n-1}X^{2n-2} + \frac{(m+1)m^{n-1}X^{2n}}{1-X^2}\\
    &=\dfrac{1 + (m-1)X^2 + m(m-1)X^4 + \cdots + m^{n-2}(m-1)X^{2(n-1)}+m^{n}X^{2n}}{1-X^2}.
\end{align*}  
Using Theorem \ref{thm: recurrence relation genfun} recursively gives
\begin{equation*}
    \zeta_{\cO_n}^{\scP} = \sum_{i=0}^{n} X^i\zeta_{\cO_{n-i}}
\end{equation*}
Substituting the values for each of $\zeta_{\cO_{n-i}}$ in the above equation and chasing powers of $X$ (which amounts to a tedious but straightforward calculation) yields
\begin{equation*}
        \zeta_{\cO_n}^{\scP}(X) = \dfrac{1+ X + m(X^{2}+ X^{3}) + \cdots + m^{n}(X^{2n-2}+ X^{2n-1}) + m^nX^{2n}}{1 - X^2}.
\end{equation*}
This concludes the proof.
\end{proof}
\begin{remark}
    By Proposition \ref{prop: geodesic non geodesic}, the above implies that in the unramified case
    \begin{equation*}
        \wzeta_{\cO_n}^{\scP}(X) = 1+ X + m(X^{2}+ X^{3}) + \cdots + m^{n-1}(X^{2n-2}+ X^{2n-1}) + m^nX^{2n}.
    \end{equation*}
    One can thus interpret these coefficients as the number of vertices at the corresponding exact distances from $\cO_n$. We leave to the reader to carry out this interpretation.
\end{remark}

\subsection{The ramified case}
Continuing with the strategy outlined at the start of this section, we note that $\mathscr{B}_R(m)$ can be thought of as the tree $\mathcal{T}_m$ with basis the chamber defined by $\cO_0, \cO_1$. In particular, we notice that $\scP_n$ consists of $m$ branches from $\cO_0$ and $m$ branches from $\scP_n$; if we note that no branches of an $\cO_i$ contain an $\cO_j$, this gives a well defined description. 

\begin{lemma}\label{L: countInRamifiedCase}
    Let $v \not\in \mathscr{P}_R$ be any vertex of $\mathscr{B}_R(m)$, then
    $$|\mathscr{R}_{h(v)}| = 2m^{h(v)}.$$
\end{lemma}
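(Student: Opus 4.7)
The plan is to decompose $\scB_R(m)$ into two rooted subtrees joined across the basin edge, identify the height of a vertex with its graph-distance to the basin, and then count by level in each subtree.

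First I would label the two endpoints of the basin chamber as $\cO_0$ and $\cO_0'$ (both have height $0$ since $\scR_0 = \scP_R$). Deleting the basin edge from $\scB_R(m)$ yields two connected components $T$ and $T'$, because $\scB_R(m)$ is a tree. Each component is a rooted subtree whose root has exactly $m$ neighbors inside the subtree (its $m+1$ neighbors in $\scB_R(m)$ minus the one endpoint across the deleted edge), and every non-root vertex has exactly $m$ children (its $m+1$ neighbors minus its parent). In particular, each of $T$ and $T'$ is the rooted $m$-ary tree in which the root has $m$ children.

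Next I would observe that for $v \notin \scP_R$ we have $h(v) = d(v, \scP_R)$. This follows directly from Equation~(\ref{eqn: defining nth basin}): a vertex at distance exactly $k$ from $\scP_R$ lies in $\scP_k$ but not in $\scP_{k-1}$, hence in $\scR_k$. Suppose $v \in T$. Since $\scB_R(m)$ is a tree, the unique geodesic from $v$ to $\cO_0'$ must cross the basin edge, giving $d(v, \cO_0') = d(v, \cO_0) + 1$, so $d(v, \scP_R) = d(v, \cO_0)$; the symmetric statement holds for $v \in T'$.

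Consequently $\scR_n$ is the disjoint union of the vertices at distance exactly $n$ from $\cO_0$ in $T$ and those at distance exactly $n$ from $\cO_0'$ in $T'$. In a rooted tree whose root has $m$ children and in which every other vertex has $m$ children, a straightforward induction on $n$ shows there are exactly $m^n$ vertices at depth $n \geq 1$. Summing the two contributions gives $|\scR_{h(v)}| = 2m^{h(v)}$, as desired. There is no real obstacle here; the only mild care needed is the identification of $h(v)$ with the distance to the basin, which is immediate from the inductive definition of $\scP_n$.
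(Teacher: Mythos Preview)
Your proof is correct and follows essentially the same approach as the paper: decompose $\scR_{h(v)}$ into the vertices emanating from each of the two endpoints of the basin edge, observe that each side is an $m$-ary rooted tree contributing $m^{h(v)}$ vertices at the relevant level, and sum. The paper argues disjointness of the two halves via parity of the distance to one endpoint, whereas you obtain it by deleting the basin edge and using connectedness; this is a cosmetic difference only.
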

\begin{proof}
    The set of vertices in $\scR_{h(v)}$ is given by $m$ branches of the ball $B(\cO_0;h(v))$ and $m$ branch of the ball $B(\cO_1;h(v))$. These sets are clearly disjoint as the distances of their elements from $0$ have distinct parity. Hence, the result follows.
\end{proof}

\begin{proposition}\label{ramifiedCoeffs}
    Let $v\in \scB_R(m)$ and $v\not\in \scP$ be given. The values $r(d,v)$ are given by 
    $$r(d,v)= \begin{cases}
        0 & d=2k+1,\, k < h(v)\\
        m^{k} & d = 2k,\, k < h(v)\\
        m^{h(v)} & d \geq 2h(v)\\
    \end{cases}.$$
\end{proposition}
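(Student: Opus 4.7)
The plan is to exploit the bipartite two-coloring of the tree together with the unique-path property to reduce walk counts to geodesic-distance counts. By symmetry we may assume $v$ lies in the subtree attached to $\cO_0$, so that $d(v,\cO_0) = h(v) = n$ and $d(v,\cO_1) = n+1$. First I would split the layer as $\scR_n = \scR_n^{0} \sqcup \scR_n^{1}$, where $\scR_n^{i}$ denotes the vertices of $\scR_n$ lying in the subtree attached to $\cO_i$; the proof of Lemma~\ref{L: countInRamifiedCase} already establishes $|\scR_n^{0}| = |\scR_n^{1}| = m^{n}$. A quick check using tree paths shows that elements of $\scR_n^{0}$ sit at geodesic distances $0, 2, 4, \ldots, 2n$ from $v$ (twice the depth difference with the least common ancestor inside $\cO_0$'s subtree), while every element of $\scR_n^{1}$ sits at the single geodesic distance $2n+1$ from $v$ (go up to $\cO_0$, cross the basin edge, descend).

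Next I would invoke the standard tree fact: a walk of length $d$ from $v$ reaches a vertex $x$ if and only if $d \geq d_{\mathrm{geo}}(v,x)$ and $d \equiv d_{\mathrm{geo}}(v,x) \pmod{2}$, since inserting or deleting back-and-forth detours changes the walk length only in even increments. Consequently same-side vertices are reachable only by walks of even length, and opposite-side vertices only by walks of odd length at least $2n+1$. This immediately settles the case $d = 2k+1$ with $k < h(v)$: no same-side vertex has odd geodesic distance, and no opposite-side vertex has geodesic distance at most $2k+1 < 2n+1$, so $r(d,v) = 0$.

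For $d = 2k$ with $k < h(v)$, only same-side vertices contribute, and I would count them by summing over the possible geodesic distance $2j$ for $0 \leq j \leq k$: the $j = 0$ term contributes $1$ (namely $v$ itself, reachable by an even back-and-forth walk), and for each $1 \leq j \leq k$ the common ancestor with $v$ lies strictly inside $\cO_0$'s subtree, contributing $(m-1)m^{j-1}$ vertices via the $m-1$ alternative child branches followed by $j-1$ free descents. Telescoping yields the identity
\begin{equation*}
    r(2k,v) \;=\; 1 + \sum_{j=1}^{k} (m-1)m^{j-1} \;=\; 1 + (m^{k} - 1) \;=\; m^{k}.
\end{equation*}
Finally, for $d \geq 2h(v)$: if $d$ is even, all of $\scR_n^{0}$ is reachable and none of $\scR_n^{1}$ (wrong parity), giving $m^{n}$; if $d$ is odd then $d \geq 2n+1$, so all of $\scR_n^{1}$ is reachable and none of $\scR_n^{0}$, again giving $m^{n}$.

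The calculations themselves are mechanical; the conceptual content is the bookkeeping of which side and which parity is active in each case. The subtlest point is recognizing that the constant $m^{h(v)}$ in the regime $d \geq 2h(v)$ hides two genuinely different phenomena (even $d$ counts the same side, odd $d$ counts the opposite side), and that the leading $1$ in the identity $1 + \sum_{j=1}^{k}(m-1)m^{j-1} = m^{k}$ comes from counting $v$ itself as reachable via non-geodesic backtracking. This is the main point one must not miss; once it is absorbed the whole proof reduces to a uniform parity/distance accounting.
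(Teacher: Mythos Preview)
Your proof is correct and follows essentially the same approach as the paper: both arguments split $\scR_n$ by which side of the basin edge a vertex lies on, use the parity/type alternation to separate even from odd $d$, and then count reachable vertices on the appropriate side. The only cosmetic difference is that for the case $d=2k$ with $k<h(v)$ the paper simply defers to the argument already given in the unramified case, whereas you write out the telescoping sum $1+\sum_{j=1}^{k}(m-1)m^{j-1}=m^{k}$ explicitly; this makes your write-up more self-contained but is not a genuinely different method.
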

\begin{proof}
    To begin, we notice that vertices on opposite sides on the basin $\scP$ have opposite type; hence if $d$ is odd, paths must go through the basin, while if $d$ they must not. Immediately this gives that if $d$ is odd and $d<2k$, then there are no such paths. The case of $d=2k,$ $k<h(v)$ is the same as the proof of Theorem~\ref{lem: unram countOfVerts}.

    Now when $d\geq 2h(v),$ the vertices are exactly those contained in the intersection $\scR_n$ and of distance $h(v)$ from $\cO_{j(d)}$, where $j(d)=2(\frac{d}{2} - \lfloor\frac{d}{2}\rfloor)$,
    as the diameter of $\scP_n$ is exactly $h(v).$ Thus the proof of Lemma~\ref{L: countInRamifiedCase} applies.
\end{proof}

We may now compute the basin generating functions associated to our way out.

\begin{proposition}
    In the ramified case, we have 
    \begin{align*}\zeta_{\cO_n}(X) &= \frac{1-X + mX^2- mX^3 + \cdots +m^{n-1} X^{2n-2} - m^{n-1} X^{2n-1} + m^n X^{2n}}{1-X}\\
    \zeta_{\cO_{n}}^P &= \frac{1+mX^2 + m^2X^4 + \cdots + m^nX^{2n}}{1-X}.
    \end{align*}
\end{proposition}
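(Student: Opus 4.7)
The proof splits into two parts, one for each formula. For the layer generating function $\zeta_{\cO_n}(X)$, I will simply sum the series using the coefficients provided by Proposition \ref{ramifiedCoeffs}. Splitting at $d = 2n$, the series decomposes as a finite polynomial $\sum_{k=0}^{n-1} m^k X^{2k}$ (from the $d < 2h(v)$ range, where odd exponents contribute zero) plus a tail $m^n \sum_{d \ge 2n} X^d = m^n X^{2n}/(1-X)$ (from the $d \ge 2h(v)$ range, where parity no longer matters since paths can now wrap around through the basin). Placing both over the common denominator $1-X$ and expanding
\[
(1-X)\sum_{k=0}^{n-1} m^k X^{2k} = 1 - X + mX^2 - mX^3 + \cdots + m^{n-1} X^{2n-2} - m^{n-1}X^{2n-1}
\]
and adding $m^n X^{2n}$ produces exactly the alternating numerator claimed in the proposition.

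For the basin generating function $\zeta_{\cO_n}^{\scP}(X)$, I plan to induct on $n$ via the recurrence of Theorem \ref{thm: recurrence relation genfun}. The base case $n=0$ must be handled separately, since $\cO_0$ lies in $\scP$ and so falls outside the hypotheses of Proposition \ref{ramifiedCoeffs}: the basin $\scP$ is a single edge whose two vertices are at geodesic distances $0$ and $1$ from $\cO_0$, giving $\wzeta_{\cO_0}^{\scP}(X) = 1 + X$ and hence $\zeta_{\cO_0}^{\scP}(X) = (1+X)/(1-X^2) = 1/(1-X)$, which agrees with the claimed formula at $n=0$. For the inductive step, assuming the formula for $n-1$, Theorem \ref{thm: recurrence relation genfun} yields
\[
\zeta_{\cO_n}^{\scP}(X) = \zeta_{\cO_n}(X) + X\,\zeta_{\cO_{n-1}}^{\scP}(X),
\]
and placing both terms over the common denominator $1-X$ one observes that the negative odd-power monomials $-m^k X^{2k+1}$ in the numerator of $\zeta_{\cO_n}(X)$ are cancelled exactly by the terms $X\cdot m^k X^{2k} = m^k X^{2k+1}$ arising from $X\,\zeta_{\cO_{n-1}}^{\scP}(X)$, leaving the purely even polynomial $1 + mX^2 + \cdots + m^n X^{2n}$ as the numerator.

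There is no serious obstacle in this argument: both parts are essentially bookkeeping, and the key phenomenon---the telescopic cancellation of odd-degree terms during the induction---is built into the alternating shape of $\zeta_{\cO_n}(X)$. The only genuine care required is the slightly nonstandard base case at $n=0$, where one must compute $\zeta_{\cO_0}^{\scP}$ by hand because $\cO_0 \in \scP$ places it outside the scope of Proposition \ref{ramifiedCoeffs}; verifying that the right-hand side of the claimed formula also specializes to $1/(1-X)$ at $n=0$ confirms compatibility and anchors the induction.
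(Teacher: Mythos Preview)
Your proof is correct and follows essentially the same approach as the paper: the computation of $\zeta_{\cO_n}(X)$ from Proposition~\ref{ramifiedCoeffs} is identical, and for $\zeta_{\cO_n}^{\scP}(X)$ both you and the paper rely on the recurrence of Theorem~\ref{thm: recurrence relation genfun}, the only cosmetic difference being that the paper unrolls it to the finite sum $\sum_{i=0}^n X^i\zeta_{\cO_{n-i}}$ while you phrase it as a one-step induction (and your explicit verification of the $n=0$ base case is a welcome detail the paper omits).
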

\begin{proof}
    By Lemma~\ref{ramifiedCoeffs}, we find that 
    \begin{align*}
    \zeta_{\cO_{n}}(X) &= 1 + mX^2 + m^2X^4 + \cdots + m^{n-1}X^{2n-2} + m^{n}(X^{2n} + X^{2n+1} + \cdots)\\
    &= \frac{1}{1-X}((1-X)(1+ mX^2 + m^2X^4 + \cdots + m^{n-1}X^{2n-2}) + m^nX^{2n})\\
    &= \frac{1}{1-X}(1-X + mX^2- mX^3 + \cdots + m^{n-1} X^{2n-2} - m^{n-1} X^{2n-1} + m^n X^{2n}).
\end{align*}
Moreover, using Theorem~\ref{thm: recurrence relation genfun} yields that $$\zeta_{\cO_n}^P(X) = \sum_{i=0}^n X^i\zeta_{\cO_{n-i}};$$
by isolating coefficients in this sum, the result for $\zeta_{\cO_n}^P(X)$ follows.
\end{proof}

\subsection{The split case}
Before beginning with the split case, we emphasize that here the basin $\scP$ is itself infinite, being an apartment.  Moreover, the orders $\cO_n$ are associated to a half apartment out of $\cO_0$, a point in the apartment.  

\begin{lemma}\label{L: coeffs split}
   Let $v\in \scR_n$ be given. The numbers $r(d,v)$ are given by
    $$r(d,v) = \begin{cases}
        m^{k-1} & d = 2k < 2n\\
        0 & d = 2k+1 < 2n\\
        \ell(m-1)m^{n-1} & d = 2n + \ell
    \end{cases}$$
\end{lemma}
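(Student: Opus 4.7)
The plan is to enumerate the vertices of $\scR_n$ at each exact geodesic distance from $v$, and then use the parity argument from Proposition~\ref{prop: geodesic non geodesic} to assemble these exact-distance counts into $r(d,v)$. The starting point is the structural observation that $\scB_S(m)$ decomposes as the apartment $\scP$ with a rooted tree attached at each apartment vertex; the root (the apartment vertex itself) has $m-1$ children, while every vertex at positive depth has $m$ children. In particular, each apartment vertex has exactly $(m-1)m^{n-1}$ descendants in $\scR_n$.

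Fix the geodesic $v = v_0, v_1, \ldots, v_n$ from $v$ to $\scP$, so that $v_i \in \scR_{n-i}$ and $v_n \in \scP$. Every $x \in \scR_n$ falls into one of two classes: \textbf{(A)} $x$ lies in the subtree attached to $v_n$, or \textbf{(B)} $x$ lies in the subtree attached to some other apartment vertex. In case (A) I would parametrise by the depth $j$ of the least common ancestor of $v$ and $x$ in that subtree; for interior LCA (that is, $1 \le j \le n-1$) the LCA $v_{n-j}$ has $m$ children, one of which lies on the geodesic from $v$, and the other $m-1$ subtrees each contribute $m^{j-1}$ vertices of $\scR_n$, for a total of $(m-1)m^{j-1}$ vertices at exact distance $2(n-j)$ from $v$. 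The boundary sub-case $j = 0$, with LCA equal to the apartment vertex $v_n$, contributes $(m-2)m^{n-1}$ vertices of $\scR_n$ at distance $2n$, since $v_n$ has only $m-1$ outward children rather than $m$. In case (B), exactly two apartment vertices lie at apartment distance $\ell \ge 1$ from $v_n$, and each contributes $(m-1)m^{n-1}$ descendants in $\scR_n$, giving $2(m-1)m^{n-1}$ vertices at exact distance $2n+\ell$ from $v$.

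With the exact-distance counts in hand, Proposition~\ref{prop: geodesic non geodesic} provides $r(d,v)$ as the sum of those exact counts over all distances of matching parity up to $d$. For $d = 2k < 2n$, only case (A) with interior LCA contributes, and the sum reduces to a geometric series in $m$; for $d = 2k+1 < 2n$, no exact geodesic distance of odd parity is available below $2n+1$, yielding zero; for $d = 2n+\ell$ one combines the matching-parity contributions of cases (A) and (B). Substituting the closed forms from the previous paragraph and simplifying then yields the expressions in the statement.

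The main obstacle, I expect, is the boundary sub-case $j = 0$ of case (A). Because the apartment vertex $v_n$ has one fewer outward child than an interior vertex of its attached subtree, the count at exact distance $2n$ breaks the interior pattern; keeping this off-by-one straight, and tracking how it combines with the case (B) contributions that first appear at the same distance $2n$, is what lets the closed forms come out uniformly. Once this subtlety is accommodated, the remainder is routine geometric-series bookkeeping.
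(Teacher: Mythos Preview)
Your plan is sound and follows the same underlying idea as the paper's proof (stay in one branch for $d<2n$; once $d\ge 2n$ count how far along the apartment you can travel), but you organise it more carefully by first tabulating exact geodesic distances and then summing via Proposition~\ref{prop: geodesic non geodesic}. Two remarks.

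First, there is an indexing slip in your case (A) interior count. With your convention $v_{n-j}\in\scR_j$, the children of $v_{n-j}$ sit in $\scR_{j+1}$, and each of the $m-1$ off-geodesic subtrees contributes $m^{\,n-j-1}$ vertices of $\scR_n$, not $m^{\,j-1}$. Equivalently, writing $k=n-j$ for the half-distance, the exact-distance-$2k$ count is $(m-1)m^{\,k-1}$ for $1\le k\le n-1$; summing these together with the single vertex at distance $0$ gives $r(2k,v)=m^k$.

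Second, once you carry out the summations correctly you will find
\[
r(2k,v)=m^{k}\quad(0\le k\le n-1),\qquad r(2n+\ell,v)=(\ell+1)(m-1)m^{\,n-1}\quad(\ell\ge 0),
\]
rather than the $m^{\,k-1}$ and $\ell(m-1)m^{\,n-1}$ printed in the lemma. These corrected values are exactly what the paper itself uses in the very next proposition (where the generating function begins $1+mX^2+\cdots+m^{\,n-1}X^{2n-2}$ and the tail is $(m-1)m^{\,n-1}X^{2n}/(1-X)^2=\sum_{\ell\ge 0}(\ell+1)(m-1)m^{\,n-1}X^{2n+\ell}$), so the discrepancy is a typo in the lemma statement, not an error in your method. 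Your boundary analysis at $j=0$ with the $(m-2)m^{\,n-1}$ count is exactly the right observation needed to make the $\ell=0$ case come out to $(m-1)m^{\,n-1}$ after combining with the $d<2n$ contributions.
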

\begin{proof}
    The distance from $v$ to the basin is $h(v)$; let us write $n=h(v)$. Hence to enter the basin and leave it again, returning at the same height a distance of at least $2n$ must be traveled. In particular, we find that if $d < 2n$ then the only vertices of $\scR_n$ that may be reached are the ones on the same branch as $v.$ 

    Now when $d \geq 2n,$ one can move $\ell = d - 2n$ units along the fundamental basin. This leads to $\ell$ vertices to leave the basin on ($\ell/2$ in each direction). The result follows upon realizing there are $m-1 = m + 1       - 2$ branches out from each vertex.
\end{proof}

Once again, this data is sufficient to compute the zeta functions using our way out. 
\begin{proposition}
    In the split case, we have \begin{align*}
        \zeta_{\cO_n}(X) &= \frac{1 - 2X + (m-1)X^2 - 2X^3 + (m-1)X^4 - \cdots + m^{n-2}(m-1)X^{2(n-1)} - 2X^{2n-1}+m^nX^{2n}}{(1-X)^2}\\
        \zeta_{\cO_n}^P(X) &= \frac{1 - X + m(X^2 - X^3) + \cdots + m^n(X^n - X^{n+1})}{(1-X)^2}.
    \end{align*}
\end{proposition}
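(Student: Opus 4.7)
The plan is to mirror exactly the two-step strategy used in the unramified and ramified cases. First I will compute $\zeta_{\cO_n}(X)$ directly from the coefficient formula of Lemma \ref{L: coeffs split}, splitting the defining series into a finite head (for $d<2n$) and an infinite tail (for $d\ge 2n$), summing the tail using the standard identity $\sum_{\ell\ge 0}\ell X^\ell = X/(1-X)^2$, and then placing everything over the common denominator $(1-X)^2$. Second, I will obtain $\zeta_{\cO_n}^{\scP}(X)$ by iterating the recurrence relation of Theorem \ref{thm: recurrence relation genfun}.

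For the first step, Lemma \ref{L: coeffs split} gives
\begin{equation*}
    \zeta_{\cO_n}(X)=\sum_{k=0}^{n-1} m^{k-1}X^{2k}\;+\;\sum_{\ell\ge 0}\ell(m-1)m^{n-1}X^{2n+\ell},
\end{equation*}
with the convention that the constant term contributes $1$. The tail is handled by the identity above, giving
\begin{equation*}
    \sum_{\ell\ge 0}\ell\, X^{2n+\ell}=\dfrac{X^{2n+1}}{(1-X)^2},
\end{equation*}
so the tail equals $(m-1)m^{n-1}X^{2n+1}/(1-X)^2$. Writing the head over the common denominator $(1-X)^2$ and expanding the factor $(1-X)^2=1-2X+X^2$ multiplies each monomial $m^{k-1}X^{2k}$ by $1-2X+X^2$; this produces exactly the alternating pattern $\cdots m^{k-1}X^{2k}-2m^{k-1}X^{2k+1}+m^{k-1}X^{2k+2}\cdots$ that, after telescoping between consecutive values of $k$, collapses to the claimed numerator
\begin{equation*}
    1-2X+(m-1)X^2-2X^3+(m-1)X^4-\cdots+m^{n-2}(m-1)X^{2(n-1)}-2X^{2n-1}+m^nX^{2n}.
\end{equation*}
The key bookkeeping is simply that the $+X^2$ piece from one term cancels most of the constant piece of the next, leaving the coefficient $m^{k-1}(m-1)$ in even positions and $-2$ in odd positions, while the tail contribution closes up the picture at $X^{2n}$.

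For the second step, iterating Theorem \ref{thm: recurrence relation genfun} and using that $\scR_0=\scP_0$ (so $\zeta_{\cO_0}^{\scP}=\zeta_{\cO_0}$) gives
\begin{equation*}
    \zeta_{\cO_n}^{\scP}(X)=\sum_{i=0}^{n}X^i\,\zeta_{\cO_{n-i}}(X).
\end{equation*}
Substituting the formula for $\zeta_{\cO_{n-i}}(X)$ just proven, the common denominator stays $(1-X)^2$, and one then chases powers of $X$ exactly as in the proofs of Propositions for the unramified and ramified cases. Most coefficients cancel in the telescoping sum, leaving the alternating block $1-X+m(X^2-X^3)+\cdots+m^n(X^{2n}-X^{2n+1})$ (interpreting the displayed exponents in the proposition in the natural pattern suggested by the two preceding cases).

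The main obstacle is purely bookkeeping: ensuring that the telescoping between the head and tail at the boundary $d=2n$ produces the isolated $m^nX^{2n}$ term rather than an extra $-2m^{n-1}X^{2n-1}$ coefficient, and likewise that iterating the recurrence produces clean cancellations rather than a residual polynomial. I would handle this by computing symbolically for $n=1,2$ first (to verify the pattern), and then giving a general argument by induction on $n$ using Theorem \ref{thm: recurrence relation genfun}: the inductive step multiplies the previous numerator by $X$ and adds the numerator of $\zeta_{\cO_n}(X)$, and the alternating structure guarantees that only the two \textbf{extremal} coefficients change.
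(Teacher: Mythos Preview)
Your proposal is correct and follows essentially the same route as the paper: split $\zeta_{\cO_n}(X)$ into the finite head $1+mX^2+\cdots+m^{n-1}X^{2n-2}$ plus the tail $\sum_{\ell\ge 0}(\ell+1)(m-1)m^{n-1}X^{2n+\ell}=(m-1)m^{n-1}X^{2n}/(1-X)^2$, clear the denominator, and then obtain $\zeta_{\cO_n}^{\scP}$ by iterating Theorem~\ref{thm: recurrence relation genfun} as $\sum_{i=0}^n X^i\zeta_{\cO_{n-i}}$ and isolating coefficients. The only point to watch is that the coefficient in Lemma~\ref{L: coeffs split} for $d=2n+\ell$ should be read as $(\ell+1)(m-1)m^{n-1}$ (so the tail sums to $X^{2n}/(1-X)^2$, not $X^{2n+1}/(1-X)^2$); once you make that correction your telescoping produces the stated numerator exactly.
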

\begin{proof}
    Using Lemma~\ref{L: coeffs split}, one computes \begin{align*}\zeta_{\cO_n} &= 1 + mX^2 + \cdots + m^{n-1}X^{2n-2} + (m-1)m^{n-1}X^{2n} + 2(m-1)m^{n-1}X^{2n+1} + \cdots \\
    &= 1 + mX^2 + \cdots + m^{n-1}X^{2n-2} + \frac{(m-1)m^{n-1}X^{2n}}{(1-X)^2}\\
    &= \frac{1 - 2X + (m-1)X^2 - 2X^3 + (m-1)X^4 - \cdots + m^{n-2}(m-1)X^{2(n-1)} - 2X^{2n-1}+m^nX^{2n}}{(1-X)^2}.\end{align*}
    Now, using the recursive formula obtained from Theorem~\ref{thm: recurrence relation genfun} and isolating for coefficients in the numerator yields the result about $\zeta_{\cO_n}^P(X)$, concluding our proof.
\end{proof}

\section{Order polynomials for $\GL(2)$  and their computations via ideal types}\label{sec: Order polynomials for GL(2)  and their computations via ideal types}

In this section, we present the constructions used in~\cite{MEELZETA} to compute the polynomials mentioned in Section~\ref{sec: Introduction}, via ideals and their type. Throughout this section, our treatment will be brief, presenting only the main statements; the interested reader can refer to~\cite{MEELZETA} for these proofs. The results of this section will be used in Section~\ref{sec: Unit distribution and natural appearance of Impacted Buildings} and to prove our main theorem in~\ref{sec: Discussion and proof of the main theorem}.

\subsection{The setup and the Zeta Function of Orders}

Let $K$ be a nonarchimedean local field of characteristic zero and $\cO_K$ its ring of integers. Let $p$ be a uniformizer of $K$ and $q$ the cardinality of the residue field of $K$. Let $L$ be a reduced $K$-algebra of dimension $2$ over $K$ and $\cO_L$ the integral closure of $\cO_K$ in $L$. Under these hypotheses, there are three possibilities for $L$: an unramified quadratic extension of $K$, a ramified quadratic extension of $K$, or $K\times K$. We refer to these cases as the \textit{unramified}, \textit{ramified} and \textit{split} case, respectively. When $L$ is a field, we shall denote by $\pi$ a fixed uniformizer of $L$ and by $e$ and $f$ the ramification and inertia degrees of $L/K$ respectively.

The general theory of discrete valuation rings guarantees the existence of $\Delta\in\cO_K$ such that
\begin{equation*}
    \cO_L = \cO_K[\Delta].
\end{equation*}
This implies that $L = K(\Delta)$. Based on this we can define
\begin{equation*}
    \cO_n := \cO_K[p^n\Delta]\subset\cO_L,
\end{equation*}
for $n = 0, 1, 2, \cdots$. Each of these sets is an order of $\cO_L$ and we refer to them collectively as the \textit{main sequence of orders}. It is an easy exercise to prove that $\cO_n$ is independent of $\Delta$. In this section we shall have no need to specify a particular $\Delta$. However, in Section \ref{sec: Unit distribution and natural appearance of Impacted Buildings} we will pick for each case convenient choices.

In \cite{ZYun}, Yun defines for (general) orders a zeta function and proves that it is a rational function of $q^{-s}$. For the orders $\cO_n$ of our main sequence, it can be verified that these zeta functions reduce to
\begin{equation*}
    \zeta_{\cO_n}(s) = \displaystyle\sum_{I\subset\cO_n}\dfrac{1}{[\cO_n: I]^s},
\end{equation*}
where the sum runs over ideals $I\subseteq\cO_n$ of finite index. One can verify that the ideals of finite index are exactly those that have rank $2$ as $\cO_K$-modules. We shall always assume that the ideals we are working with satisfy this latter condition.

The aforementioned zeta functions are thus the classical ideal counting functions. In particular, when $L$ is a field extension and $n = 0$ it coincides with its local zeta function. 

The rationality of these zeta functions implies there are two polynomials, $P(X)$ and $V(X)$ such that
\begin{equation*}
    \zeta_{\cO_n}(s) = \dfrac{P(q^{-s})}{V(q^{-s})}.
\end{equation*}
The denominator polynomial depends uniquely on the extension $L/K$ and is easy to compute. On the other hand, the numerator polynomial $P$ depends on the particular order $\cO_n$. Their values are given by the following result.


\begin{theorem}\label{thm: polynomialvalues}
For each $n\ge 0$ define the following polynomials:
\begin{align*}
        R_n(X) &= 1 + qX^2 + q^2X^4 + ... + q^nX^{2n},
\end{align*}
and for $n\ge 1$ define
\begin{align*}
    U_n(X) &= (1 + X)R_{n - 1}(X) + q^nX^{2n},\\
    S_n(X) &= (1 - X)R_{n - 1}(X) + q^nX^{2n}.
\end{align*}
Finally, also put $U_0(X) = S_0(X) = 1$. Explicitly, these polynomials are
\begin{align*}
    R_n(X) &= 1 + qX^2 + q^2X^4 + ... + q^nX^{2n},\\
    U_n(X) &= 1 + X + qX^2 + qX^3 + ... + q^{n - 1}X^{2n - 2} + q^{n - 1}X^{2n - 1} + q^nX^{2n},\\
    S_n(X) &= 1 - X + qX^2 - qX^3 + ... + q^{n - 1}X^{2n - 2} - q^{n - 1}X^{2n - 1} + q^nX^{2n}.
\end{align*}Then we have 
\begin{align*}
    (1 - q^{-s})\zeta_{\cO_n}(s) &= R_n(q^{-s}),\\
    (1 - q^{-2s})\zeta_{\cO_n}(s) &= U_n(q^{-s}),\\
    (1 - q^{-s})^2\zeta_{\cO_n}(s) &= S_n(q^{-s}).
\end{align*}
That is, $R_n, U_n$ and $S_n$ are the numerators of the corresponding zeta functions in the ramified, unramified and split case, respectively.
\end{theorem}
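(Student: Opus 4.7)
The plan is to compute $\zeta_{\cO_n}(s)$ by reducing it to a sum over principal ideals, and then organizing those principal ideals into finitely many combinatorial ``types'' whose counts can be summed in closed form. The shape of the claim suggests the factorization
\begin{equation*}
\zeta_{\cO_n}(s) \;=\; P_n(q^{-s})\cdot\zeta_{\cO_L}(s),
\end{equation*}
where $P_n\in\{R_n,U_n,S_n\}$ and $\zeta_{\cO_L}(s)$ is the local zeta of the maximal order, whose local factor is $(1-q^{-s})^{-1}$, $(1-q^{-2s})^{-1}$, or $(1-q^{-s})^{-2}$ in the ramified, unramified, and split case respectively. I would first establish this factorization by decomposing each finite-index ideal $I\subseteq\cO_n$ uniquely as a product $I=\fa\cdot J$, where $\fa$ is pulled back from an ideal of $\cO_L$ and $J$ is a ``primitive'' principal ideal of $\cO_n$ (not further divisible by any ideal coming from $\cO_L$). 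Ideals of the first type contribute exactly $\zeta_{\cO_L}(s)$, so the theorem reduces to computing the generating series $P_n(X)$ enumerating primitive principal ideals weighted by their $\cO_n$-index.

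Next I would parametrize the primitive principal ideals using the explicit presentation $\cO_n=\cO_K\oplus\cO_K\cdot(p^n\Delta)$. Every nonzero $\alpha\in\cO_n$ is written uniquely as $\alpha=a+b\,p^n\Delta$ with $a,b\in\cO_K$, and $[\cO_n:\alpha\cO_n]$ can be read off from $\val_K(a)$, $\val_K(b)$, and $N_{L/K}(\alpha)$. Two generators give the same ideal iff they differ by a unit of $\cO_n^{\times}$, so the problem becomes to classify the orbits of $\cO_n^{\times}$ acting on the set of primitive generators. I would define the \emph{type} of a primitive $\alpha$ as a combinatorial invariant (typically the pair of valuations $(\val_K(a),\val_K(b))$ plus, when appropriate, a refinement by a residue class modulo a suitable power of $p$) and verify that every $\cO_n^{\times}$-orbit has a unique representative of some type, that the index is constant on each type, and that the number of orbits of each type is an explicit monomial in $q$. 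The different shapes of $R_n$, $U_n$, $S_n$ reflect the three different type structures: ramification forces the two valuation scales to collapse into a single one (so only even powers $q^iX^{2i}$ survive in $R_n$); the two factors of $L=K\times K$ act independently, producing the squared denominator and, after collecting, the alternating-sign pattern in $S_n$; the unramified case sits in between, generating both even and odd powers in $U_n$ via the residue-field quadratic structure.

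Summing the contributions of all types, $P_n(q^{-s})$ appears as a finite sum of explicit geometric series, whose evaluation matches the advertised closed form; multiplying by the appropriate factor $1-q^{-s}$, $1-q^{-2s}$, or $(1-q^{-s})^2$ then gives the three stated identities. The main obstacle will be the type classification: proving simultaneously that it is exhaustive (every primitive generator belongs to some type) and nonredundant (distinct types produce distinct principal ideals). This hinges on a careful description of $\cO_n^{\times}$ as a finite-index subgroup of $\cO_L^{\times}$ and of how it acts on residues modulo the conductor $p^n\cO_L$. Once that structural analysis is complete, the three closed-form identities follow from routine manipulations of geometric series.
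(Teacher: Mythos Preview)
The paper does not prove this theorem directly; it cites \cite{MEELZETA} and then summarizes that paper's strategy in the surrounding subsections. That strategy shares your second half (classify principal ideals by a valuation-based ``type'', count each type via the unit index $[\cO_0^{*}:\cO_n^{*}]$ or $[\cO_{n-d}^{*}:\cO_n^{*}]$, and sum geometric series), but the reduction step is different. The paper does \emph{not} factor $\zeta_{\cO_n}$ as $P_n\cdot\zeta_{\cO_L}$ via an ideal decomposition $I=\fa\cdot J$. Instead it uses the traveling map $T_{n-1}\colon J\mapsto pJ$, which is shown to be a bijection from all rank-$2$ ideals of $\cO_{n-1}$ onto the \emph{non-principal} ideals of $\cO_n$. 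This yields the recurrence
\[
\zeta_{\cO_n}(s)=\zeta_{\cO_n}^{P}(s)+q^{-s}\zeta_{\cO_{n-1}}(s),
\]
and the numerator polynomials $R_n,U_n,S_n$ drop out after computing each $\zeta_{\cO_m}^{P}$ and unwinding the recursion down to $\zeta_{\cO_0}=\zeta_{\cO_L}$.

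Your proposed reduction is where the gap lies. In a non-maximal order $\cO_n$ there is no unique factorization of ideals, and the phrase ``$\fa$ pulled back from an ideal of $\cO_L$'' is ambiguous: neither $\fa\cap\cO_n$ nor $\fa\cdot\cO_n$ behaves well enough to make $I=\fa\cdot J$ a canonical decomposition, and the needed index multiplicativity $[\cO_n:\fa J]=[\cO_n:\fa]\,[\cO_n:J]$ is not automatic. More seriously, you assert that the primitive factor $J$ is always \emph{principal}; since $\cO_n$ genuinely has non-principal ideals for $n\ge 1$, this requires proof, and the mechanism that produces it (namely that every non-principal ideal is $p$ times an ideal of the \emph{shallower} order $\cO_{n-1}$, not of $\cO_n$ itself) is exactly the traveling-map statement you are bypassing. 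A secondary point: the paper's ``type'' is the $L$-valuation $\val_{\pi}(\alpha)$ (or the pair $(\val_p\alpha_1,\val_p\alpha_2)$ in the split case), not the $\cO_K$-coordinate valuations $(\val_K a,\val_K b)$ you propose; the two are related but not interchangeable, and the paper's choice is what makes the high/low dichotomy and the count $|X_\omega|=[\cO_{n-d}^{*}:\cO_n^{*}]$ clean.
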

\begin{proof}
    This is Theorem 5.4 of \cite{MEELZETA}.
\end{proof}

\subsection{The Traveling map and the Principal Zeta Function}

In \cite{MEELZETA} the strategy to prove Theorem \ref{thm: polynomialvalues} requires two steps. The first one establishes a recurrence relation between different zeta functions. This step has as consequence the reduction of the problem to that of computing the contribution from the principal ideals. The second one consists on computing the latter by organizing the principal ideals by their type.
We begin with discussing the recurrence relation.

\begin{definition}
For each integer $n\ge 0$, let $\cI_n$ be the set of $\cO_K$-rank 2 ideals of $\cO_n$. Define the \textbf{traveling map}
\begin{equation*}
    T_n: \cI_n \longrightarrow \cI_{n + 1},\quad
    T_n(J) = pJ.
\end{equation*}
\end{definition}

This definition is motivated by the following theorem.
\begin{theorem}\label{thm: travelingmap}
The traveling map $T_n$ is a bijection onto its image, which consists exactly of the non-principal ideals in $\cI_{n+1}$.
\end{theorem}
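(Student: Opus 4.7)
The plan is to prove the theorem in four steps: well-definedness, injectivity, the inclusion $\mathrm{Im}(T_n) \subseteq \{\mbox{non-principal ideals}\}$, and the reverse inclusion. First, I would check well-definedness: since $\cO_{n+1} = \cO_K[p^{n+1}\Delta] \subseteq \cO_K[p^n\Delta] = \cO_n$, any $\cO_n$-ideal $J$ is automatically closed under multiplication by $\cO_{n+1}$, and multiplication by the nonzero scalar $p$ preserves $\cO_K$-rank, so $pJ \in \cI_{n+1}$. Injectivity is immediate: if $pJ_1 = pJ_2$ as subsets of $L$, cancelling $p$ gives $J_1 = J_2$.

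For the containment $\mathrm{Im}(T_n) \subseteq \{\mbox{non-principal}\}$, suppose for contradiction that $pJ = \alpha\cO_{n+1}$ for some $\alpha \in L$. Since $\alpha\cO_{n+1}$ has $\cO_K$-rank $2$, $\alpha$ is not a zero-divisor in $L$. Because $J$ is $\cO_n$-stable, so is $pJ$, which forces $\cO_n \cdot \alpha \subseteq \alpha\cO_{n+1}$; cancelling $\alpha$ yields $\cO_n \subseteq \cO_{n+1}$, contradicting the strict inclusion $\cO_{n+1} \subsetneq \cO_n$.

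The reverse containment is the substantive step. Fix a non-principal $I \in \cI_{n+1}$ and consider its multiplier order $(I:I) := \{x \in L : xI \subseteq I\}$, which contains $\cO_{n+1}$. Since the $\cO_K$-orders of $L$ form the chain $\cO_0 \supsetneq \cO_1 \supsetneq \cdots$, we have $(I:I) = \cO_k$ for some $k \leq n+1$. The crux is that non-principality forces $k \leq n$, so $I$ is automatically an $\cO_n$-ideal. Once this is established, $I$ being proper and $\cO_{n+1}$ being local with maximal ideal $\fm_{n+1} = p\cO_n$ (an identity that follows from the $\cO_K$-module decomposition $\cO_{n+1} = \cO_K + p^{n+1}\Delta\cO_K$, via $\cO_{n+1}/p\cO_n \cong \cO_K/p\cO_K$) yields $I \subseteq p\cO_n$. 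Hence $J := p^{-1}I \subseteq \cO_n$ is an $\cO_n$-ideal of finite index in $\cO_n$, and $T_n(J) = I$.

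The main obstacle is the crux above: showing $(I:I) = \cO_{n+1}$ forces $I$ to be principal. Each $\cO_k$ is a monogenic $\cO_K$-algebra of the form $\cO_K[X]/(f(X))$, hence a one-dimensional Gorenstein local ring. By the Bass-style classification of fractional ideals over Gorenstein orders, an ideal whose multiplier ring equals $\cO_{n+1}$ is invertible; since $\cO_{n+1}$ is local, invertible modules are free of rank one, i.e., $I$ is principal. If one prefers to avoid invoking this general theory, the same conclusion can be reached by a direct enumeration of the fractional ideals of $\cO_{n+1}$ in each of the three cases (unramified, ramified, split), using the presentation of $\cO_{n+1}$ as $\cO_K + p^{n+1}\Delta\cO_K$; this is in the spirit of the ``ideal type'' analysis of~\cite{MEELZETA}. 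A secondary but routine verification is the identity $\fm_{n+1} = p\cO_n$ in all three cases.
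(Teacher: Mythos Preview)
Your argument is correct. The paper itself does not prove this statement; it simply cites \cite{MEELZETA}, Theorem~3.22, so there is no in-paper proof to compare against line by line.

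That said, your route is almost certainly more conceptual than the one in the cited reference. The surrounding discussion in the paper (and the structure of \cite{MEELZETA} as summarized in Section~\ref{sec: Order polynomials for GL(2)  and their computations via ideal types}) suggests that the original proof proceeds by a hands-on classification of ideals in each of the three cases, in the spirit of the ``ideal type'' analysis you allude to at the end. Your proof instead isolates the structural reason: the orders $\cO_k$ form a chain, $\cO_{n+1}$ is local with $\fm_{n+1}=p\cO_n$, and the Gorenstein property of $\cO_{n+1}=\cO_K[p^{n+1}\Delta]$ (a monogenic, hence complete-intersection, $\cO_K$-algebra) forces any ideal with endomorphism ring exactly $\cO_{n+1}$ to be invertible, hence principal. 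This packages the three cases uniformly and makes transparent why non-principality pushes the multiplier ring up to $\cO_n$. The trade-off is that you invoke the Bass characterization of Gorenstein orders, which is heavier machinery than a direct enumeration; your closing remark that one can instead verify the crux case by case is an honest acknowledgment of this. Both the identity $\fm_{n+1}=p\cO_n$ and the locality of $\cO_{n+1}$ for $n\ge 0$ are routine checks from the presentation $\cO_{n+1}=\cO_K+p^{n+1}\Delta\,\cO_K$, exactly as you indicate.
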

\begin{proof}
    This is Theorem 3.22 in \cite{MEELZETA}.
\end{proof}
This motivates us to isolate the contribution to $\zeta_{\cO_n}(s)$ from the principal ideals. 
\begin{definition}\label{def: principal ideal zeta function}
    For each $n\ge 0$, we define the \defbf{principal ideal zeta function} as
    \begin{equation*}
    \zeta_{\cO_n}^P(s) := \sum_{\substack{I \subseteq \cO_n \\ \text{principal}}} \frac{1}{[\cO_n : I]^s}
\end{equation*}
\end{definition}
A consequence of Theorem \ref{thm: travelingmap}, together with some index manipulations, is the recurrence relation we mentioned above. 
\begin{theorem}\label{thm: recurrencestatement}
For $n\ge 1$, the zeta functions of the orders $\cO_n$ satisfy the recurrence relation
\begin{equation}\label{eq: recurrence relation zetas}
    \zeta_{\cO_n}(s) = \zeta_{\cO_n}^P(s) + q^{-s}\zeta_{\cO_{n-1}}(s).
\end{equation}
\end{theorem}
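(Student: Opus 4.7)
The plan is to split the sum defining $\zeta_{\cO_n}(s)$ according to whether an ideal is principal or not, and then to use Theorem~\ref{thm: travelingmap} to reparametrize the non-principal contribution by $\cI_{n-1}$. Concretely, I would write
\begin{equation*}
    \zeta_{\cO_n}(s) = \zeta_{\cO_n}^P(s) + \sum_{\substack{I\in\cI_n\\ I\text{ non-principal}}}\frac{1}{[\cO_n:I]^s},
\end{equation*}
and by Theorem~\ref{thm: travelingmap} the non-principal ideals of $\cO_n$ are exactly the images $T_{n-1}(J) = pJ$ for $J\in\cI_{n-1}$, with $T_{n-1}$ a bijection onto this image. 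Hence the remaining sum becomes $\sum_{J\in\cI_{n-1}}[\cO_n:pJ]^{-s}$, and the whole problem reduces to computing the index $[\cO_n:pJ]$ in terms of $[\cO_{n-1}:J]$.

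For the index computation, the key observation is that $\cO_n\subseteq\cO_{n-1}$ and $p\cO_{n-1}\subseteq\cO_n$, so that $pJ\subseteq p\cO_{n-1}\subseteq\cO_n$ and one has the tower
\begin{equation*}
    [\cO_n:pJ] \;=\; [\cO_n:p\cO_{n-1}]\cdot[p\cO_{n-1}:pJ].
\end{equation*}
Multiplication by $p$ is injective on $L$, so $[p\cO_{n-1}:pJ]=[\cO_{n-1}:J]$. For the first factor, using the explicit $\cO_K$-module descriptions $\cO_n=\cO_K\oplus p^n\Delta\,\cO_K$ and $p\cO_{n-1}=p\cO_K\oplus p^n\Delta\,\cO_K$, one sees that $\cO_n/p\cO_{n-1}\cong\cO_K/p\cO_K$ has cardinality $q$. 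Therefore $[\cO_n:pJ] = q\cdot[\cO_{n-1}:J]$.

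Combining these two paragraphs,
\begin{equation*}
    \sum_{J\in\cI_{n-1}}\frac{1}{[\cO_n:pJ]^s} \;=\; q^{-s}\sum_{J\in\cI_{n-1}}\frac{1}{[\cO_{n-1}:J]^s} \;=\; q^{-s}\zeta_{\cO_{n-1}}(s),
\end{equation*}
which yields the recurrence. The only real obstacle is the index identity $[\cO_n:p\cO_{n-1}]=q$: it rests on the simultaneous flag structure $p\cO_{n-1}\subseteq\cO_n\subseteq\cO_{n-1}$ of the orders and the $\cO_K$-module descriptions above, and one should verify in passing that these decompositions hold uniformly in the unramified, ramified, and split cases (they do, since in all three cases $\cO_L = \cO_K\oplus\Delta\,\cO_K$ and $\cO_n = \cO_K\oplus p^n\Delta\,\cO_K$ by definition). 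Everything else is a direct rewriting of sums, with no arithmetic subtlety.
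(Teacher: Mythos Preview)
Your proof is correct and follows exactly the approach the paper indicates: the paper itself only cites \cite{MEELZETA} for this statement, but the surrounding text says the recurrence is ``a consequence of Theorem~\ref{thm: travelingmap}, together with some index manipulations,'' which is precisely what you carry out. Your index computation $[\cO_n:p\cO_{n-1}]=q$ via the direct-sum descriptions $\cO_n=\cO_K\oplus p^n\Delta\,\cO_K$ and $p\cO_{n-1}=p\cO_K\oplus p^n\Delta\,\cO_K$ is the expected one and holds uniformly in all three cases.
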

\begin{proof}
    This is Theorem 4.2 in \cite{MEELZETA}.
\end{proof}

\subsection{Types of Principal Ideals}

If one can compute $\zeta_{\cO_n}^P(s)$ then, by the repeated use of the recurrence relation \eqref{eq: recurrence relation zetas}, one can compute $\zeta_{\cO_n}(s)$. In order to do this one introduces the type of an element in $\cO_L$. 

\begin{definition}\label{def: type of an element}
    Let $x\in \cO_0 = \cO_L$ be any element. We define its \textbf{type} by
\begin{equation*}
    \varepsilon(x) = 
    \left\{
	\begin{array}{ll}
		\valpi(x)  & \mbox{in the nonsplit case }  \\
		(\valp(x_1), \valp(x_2)) & \mbox{in the split case }  
	\end{array}
\right.
\end{equation*}
where in the split case we have $x = (x_1, x_2)$. We define a \textbf{possible type} as an element of the image of $\varepsilon$. We also define for every $x\in \cO_L$
\begin{equation*}
    \eta(x) = \min\{\valp(a) \mid a \mbox{ is a coordinate of } \varepsilon(x)\}.
\end{equation*}

Furthermore, let $I\subseteq\cO_n$ be a principal ideal and $\alpha$ one of its generators. We define the \textbf{type of $I$} by
\begin{equation*}
    \varepsilon(I):= \varepsilon(\alpha),
\end{equation*}
and $\eta(I) := \eta(\alpha)$. It is straightforward to verify these definitions are independent of the generator $\alpha$ of $I$.
\end{definition}

The types thus correspond to the ideals that can contribute to the sum $\zeta_{\cO_n}^P$. In practice, the contribution of ideals is determined by their type. Moreover, the contribution of types is not uniform; rather, they are classified in two main families, depending only on the \say{magnitude} of a type.

\begin{definition}\label{def: high and low types}
Let $\omega$ be a possible type. In the nonsplit case, we say $\omega$ is a \textbf{low type} if $\omega < ne$ where $e$ is the ramification degree of $L/K$. In the split case, we say $\omega = (\omega_1, \omega_2)$ is a \textbf{low type} if $\omega_1 < n$ or $\omega_2 < n$.
Otherwise, we say $\omega$ is a \textbf{high type}. 
\end{definition}
Adapting the above definition to ideals leads to the following definition.
\begin{definition}\label{def: high and low ideals}
Let $\omega$ be a possible type of a principal ideal of $\cO_n$. We say $I$ is a \textbf{low ideal} if $\varepsilon(I)$ is a low type. That is, if
\begin{equation*}
    \eta(I) < \left\{
	\begin{array}{ll}
		ne & \mbox{in the nonsplit case, }  \\
		n & \mbox{in the split case. }  
	\end{array}
\right.
\end{equation*}
Otherwise, we say $I$ is a \textbf{high ideal}. We will refer to $ne$ and $n$, respectively in the nonsplit and split cases, as the \textbf{ideal threshold} and denote if uniformly by $t_n$.
\end{definition}

\subsection{The computation of the Zeta Function of Orders}

With the concept of low and high ideals at hand, we can compute the principal idea zeta function. First, we notice that the index $[\cO_n: I]$ is uniquely determined by $\epsilon(I)$.

\begin{definition}\label{def: type contribution}
Let $\omega$ be a possible type of a principal ideal $I\subseteq \cO_n$. We define its \textbf{ type contribution} as
\begin{equation*}
    c(\omega):= \left\{
	\begin{array}{ll}
		f\omega & \mbox{in the nonsplit case, }  \\
		\omega_1 + \omega_2 & \mbox{in the split case. }  
	\end{array}
\right.
\end{equation*}
In here we are denoting $\omega = (\omega_1, \omega_2)$ in the split case, where the possible types are pairs of nonnegative integers. 
\end{definition}
In terms of the type contribution one easily proves
\begin{equation}\label{eq: index value contribution}
    [\cO_n:I] = q^{-c(\varepsilon(I))}.
\end{equation}

Let $\omega$ be a possible type and define 
\begin{equation*}
   X_{\omega} :=  \{I\subseteq\cO_n, \mbox{ principal ideal } \mid\; \varepsilon(I) = \omega\}. 
\end{equation*}
Equation~\eqref{eq: index value contribution} implies that every ideal in $X_{\omega}$ contributes the same to the zeta function. Thus, grouping ideals by type, we deduce
\begin{equation*}
    \zeta_{\cO_n}^P(s) = \displaystyle\sum_{\omega}\dfrac{| X_{\omega}|}{q^{c(\omega)}}.
\end{equation*}

With this, we have reduced the computation of the principal zeta function to a two-step-process: \begin{itemize}
    \item deduce which possible types appear as types of an ideal, and
    \item for each of these types, compute $|X_{\omega}|$.
\end{itemize}
This process is summarized in the next theorem.

\begin{theorem}\label{thm:Type situation} 
The criteria to determine which possible types are types of principal ideals and their contribution are given by the following facts.
    \begin{description}
        \item[High ideals:] Every possible high type $\omega$ is the type of some principal ideal. In such case,
        \begin{equation*}
            |X_{\omega}| = [\cO_0^*:\cO_n^*].
        \end{equation*}
        \item[Low ideals:] In the nonsplit case, a possible low ideal $\omega$ is the type of some principal ideal if and only if $n = de$ for some integer $d$. 
        
        In the split case, a possible low ideal $\omega = (\omega_1, \omega_2)$ is the type of some principal ideal if and only if $\omega_1 = \omega_2 = d$. 
        
        In any of these cases,
        \begin{equation*}
            |X_{\omega}| = [\cO_{n-d}^*:\cO_{n}^*] = \dfrac{[\cO_{0}^*:\cO_n^*]}{[\cO_{0}^*:\cO_{n - d}^*]}.
        \end{equation*}
    \end{description}
\end{theorem}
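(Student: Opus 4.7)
The plan is to recast $|X_\omega|$ as an orbit count. Two elements $\alpha, \beta$ of $\cO_n$ generate the same principal ideal if and only if $\beta = u\alpha$ for some $u \in \cO_n^*$, and the type is invariant under units. Setting $Y_\omega := \{\alpha \in \cO_n : \varepsilon(\alpha) = \omega\}$ therefore yields a bijection $X_\omega \cong Y_\omega/\cO_n^*$, and the whole task becomes to describe $Y_\omega$ and count $\cO_n^*$-orbits. Before splitting into cases, I would record the explicit descriptions $\cO_n = \cO_K + p^n \cO_L$ in the nonsplit case and $\cO_n = \{(a + p^n x, a + p^n y) : a, x, y \in \cO_K\}$ in the split case, both easily checked from $\cO_n = \cO_K[p^n \Delta]$. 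The crucial consequence is the containment $p^n \cO_L \subseteq \cO_n$.

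For the high case, whenever $\omega \geq t_n$, every element $\alpha \in \cO_L$ with $\varepsilon(\alpha) = \omega$ already lies in $p^n \cO_L \subseteq \cO_n$. Therefore $Y_\omega$ coincides with the full set of elements of $\cO_0$ of type $\omega$, which is a single $\cO_0^*$-torsor. Counting suborbits under $\cO_n^* \subseteq \cO_0^*$ gives $|X_\omega| = [\cO_0^* : \cO_n^*]$, settling both the existence and size claims simultaneously.

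For the low case I would work directly with the presentations above. In the nonsplit case, if $\alpha = a + p^n \beta \in \cO_n$ has $\valpi(\alpha) = \omega < ne$, then $\valpi(p^n\beta) \geq ne > \omega$, so $\valpi(a) = \omega$; since $a \in \cO_K$, its valuation is divisible by $e$, forcing $\omega = de$ for some $0 \leq d < n$. Writing $a = p^d u$ with $u \in \cO_K^*$, the quotient $\alpha/p^d = u + p^{n-d}\beta$ lies in $\cO_{n-d}$ and is a unit there, because its $\cO_K$-part is a unit. Conversely every element of $p^d \cO_{n-d}^*$ lies in $\cO_n$ and has the required type, giving $Y_\omega = p^d \cO_{n-d}^*$, and the orbit count is $[\cO_{n-d}^* : \cO_n^*]$; the tower formula for unit indices then identifies this with $[\cO_0^*:\cO_n^*]/[\cO_0^*:\cO_{n-d}^*]$. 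The split case follows the same template, with the extra rigidity that the common summand $a$ on the diagonal contributes identically to both coordinates, which is precisely what forces $\omega_1 = \omega_2$ once the type is low. The main obstacle I anticipate is isolating this split-case rigidity cleanly and reconciling it with the freedom of independent coordinates in the high case; once the structural description of $\cO_n$ and the role of the $\cO_K$-constant term is in hand, the orbit-counting is uniform across all three cases.
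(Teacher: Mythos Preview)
Your orbit-counting approach is correct and essentially self-contained. The paper itself does not prove this theorem; it simply records it as ``discussed in Section~4 of \cite{MEELZETA}.'' So there is no in-paper argument to compare against, and what you have written would in fact supply the missing proof.

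A few remarks on details you should tighten. First, the key description $\cO_n = \cO_K + p^n\cO_L$ is not a direct sum, but your valuation argument only needs the inclusion $p^n\cO_L\subseteq\cO_n$ together with the fact that every element of $\cO_n$ can be written as $a + p^n\beta$ with $a\in\cO_K$, so this is fine. Second, the assertion that $\alpha/p^d = u + p^{n-d}\beta$ is a \emph{unit} in $\cO_{n-d}$ (not merely in $\cO_L$) deserves one line: since $u\in\cO_K^*$ and $n-d\ge 1$, the geometric series for $(1 + p^{n-d}u^{-1}\beta)^{-1}$ converges in $1 + p^{n-d}\cO_L$, so the inverse lands back in $\cO_K + p^{n-d}\cO_L = \cO_{n-d}$. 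The analogous check in the split case is that both coordinates of the inverse remain congruent modulo $p^{n-d}$, which follows for the same reason. Third, in the split low case your ``rigidity'' observation is exactly right and needs no further work: if (say) $\omega_1 < n$ then $\valp(a) = \omega_1$, and the second coordinate $a + p^n y$ inherits the same valuation automatically, forcing $\omega_2 = \omega_1$.

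Finally, note that the theorem as printed contains a typo (``$n = de$'' should read ``$\omega = de$''); you have correctly proved the intended statement.
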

\begin{proof}
    This is discussed in Section 4 of \cite{MEELZETA}.
\end{proof}
\begin{example}
    To exemplify the difference between possible types and those that are actually types of ideals we show what happens for $\cO_3$. 
     \begin{figure}[ht]
    \centering
        \begin{subfigure}[t]{0.30\textwidth}
            \centering
            \includegraphics[scale = 0.6]{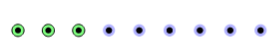}
            \subcaption{Unramified Case}
            \label{fig: unramified Impact}
        \end{subfigure}
        \begin{subfigure}[t]{0.30\textwidth}
            \centering
            \includegraphics[scale = 0.7]{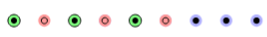}
            \subcaption{Ramified Case}
            \label{fig: unramified Impact}
        \end{subfigure}
        \begin{subfigure}[t]{0.30\textwidth}
            \centering
            \includegraphics[scale = 0.7]{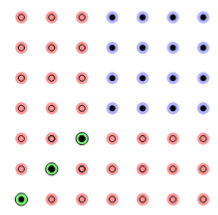}
            \subcaption{Split Case}
            \label{fig: ramified Impact}
        \end{subfigure}
       \caption{The different cases and their possible types.}
            \label{figure3: type distribution} 
    \end{figure}
    In Figure \ref{figure3: type distribution}, each dot represent a possible type. The green dots represent the low types that appear, the blue ones the high types that appear, and the red ones those who are not achieved by any ideal.
\end{example}

The indices that appear in Theorem \ref{thm:Type situation} are given by the following result.
\begin{proposition}\label{valuesindices}
The indices of the units subgroups satisfy for $n\ge 1$,
\begin{equation*}
    [\cO_0^*:\cO_n^*] = \left\{
    \begin{array}{ll}
        q^n & \mbox{in the ramified case,}\\
        (q+1)q^{n-1} & \mbox{in the unramified case,}\\
        (q-1)q^{n-1} & \mbox{in the split case.}\\
    \end{array}
    \right.
\end{equation*}
Also $[\cO_0^*:\cO_0^*] = 1$.
\end{proposition}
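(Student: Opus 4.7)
The plan is to reduce the index computation to a uniform residue-level calculation and then split into the three cases. First I would use the explicit description $\cO_L = \cO_K + \cO_K\Delta$ to rewrite
\[
\cO_n = \cO_K[p^n\Delta] = \cO_K + p^n\cO_K\Delta = \cO_K + p^n\cO_L,
\]
and check directly (writing elements in the basis $\{1,\Delta\}$, or componentwise in the split case) that $\cO_K\cap p^n\cO_L = p^n\cO_K$. In particular, $p^n\cO_L\subseteq \cO_n$. I would also verify that $\cO_n^* = \cO_L^*\cap\cO_n$: one direction is clear, and for the converse, in the nonsplit case $\cO_n$ is local so units are detected modulo the maximal ideal, while in the split case the explicit description $\cO_n = \{(c,d)\in\cO_K\times\cO_K : c\equiv d\pmod{p^n}\}$ makes this immediate.

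The crux of the proof is the filtration
\[
1 + p^n\cO_L \;\subseteq\; \cO_n^* \;\subseteq\; \cO_0^* = \cO_L^*,
\]
which makes sense because $p^n\cO_L$ lies in the Jacobson radical of $\cO_L$ and is contained in $\cO_n$. The standard isomorphism $\cO_L^*/(1+p^n\cO_L)\cong(\cO_L/p^n\cO_L)^*$ identifies $\cO_n^*/(1+p^n\cO_L)$ with $(\cO_K/p^n\cO_K)^*$ via the embedding $\cO_K/p^n\cO_K\hookrightarrow\cO_L/p^n\cO_L$ provided by $\cO_K\cap p^n\cO_L = p^n\cO_K$. Hence
\[
[\cO_0^*:\cO_n^*] \;=\; \frac{\bigl|(\cO_L/p^n\cO_L)^*\bigr|}{\bigl|(\cO_K/p^n\cO_K)^*\bigr|}.
\]

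With this formula, the three cases become a short residue-field count; in all of them $|\cO_L/p^n\cO_L|=q^{2n}$ and $|(\cO_K/p^n\cO_K)^*| = q^{n-1}(q-1)$. In the ramified case, $p^n\cO_L=\pi^{2n}\cO_L$ and the residue field has $q$ elements, so $|(\cO_L/p^n\cO_L)^*| = q^{2n-1}(q-1)$ and the ratio is $q^n$. In the unramified case, $p$ is still a uniformizer of $\cO_L$ but the residue field has $q^2$ elements, giving $|(\cO_L/p^n\cO_L)^*| = q^{2n-2}(q^2-1)$ and the ratio $q^{n-1}(q+1)$. In the split case $(\cO_L/p^n\cO_L)^*=(\cO_K/p^n\cO_K)^*\times(\cO_K/p^n\cO_K)^*$, producing $q^{n-1}(q-1)$. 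The $n=0$ case is trivial.

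The main obstacle is not the arithmetic but the structural identification at the second step: one must confirm that the image of $\cO_n^*$ in $(\cO_L/p^n\cO_L)^*$ is exactly $(\cO_K/p^n\cO_K)^*$, including that every unit of $\cO_K/p^n\cO_K$ lifts to a genuine unit in $\cO_n^*$ (which follows from lifting to $\cO_K^*$ and using the inclusion $\cO_K^*\subseteq\cO_n^*$). Once this is secured, the case-by-case counts are routine.
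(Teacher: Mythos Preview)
Your argument is correct. The filtration $1+p^n\cO_L\subseteq\cO_n^*\subseteq\cO_L^*$ together with the identification of $\cO_n^*/(1+p^n\cO_L)$ with $(\cO_K/p^n\cO_K)^*$ inside $(\cO_L/p^n\cO_L)^*$ is sound (the surjectivity you flag as the main obstacle is indeed handled by lifting to $\cO_K^*\subseteq\cO_n^*$), and the three residue counts are accurate.

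The paper does not actually prove this proposition; it simply cites Proposition~5.2 of \cite{MEELZETA}. So there is no in-paper argument to compare against directly. That said, the surrounding material in Section~\ref{sec: Unit distribution and natural appearance of Impacted Buildings} contains an alternative route: the slope maps $M_n$ of Proposition~\ref{prop: the slopes map} give $\cO_n^*/\cO_{n+1}^*\cong\cO_K/p\cO_K$ for every $n\ge1$ (Corollary~\ref{cor: units base step}), which reduces the whole statement to the single base case $[\cO_0^*:\cO_1^*]$. Your approach computes all $n$ at once via a single quotient and is cleaner as a standalone proof; the slope-map approach, by contrast, makes the successive quotients $\cO_n^*/\cO_{n+1}^*$ explicit, which is what the paper actually exploits later to produce the coset representatives $u_\alpha$.
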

\begin{proof}
This is proposition 5.2 in \cite{MEELZETA}.
\end{proof}
At this stage one can use the recurrence relation and all the above information on types to compute the zeta function. This computations leads to the polynomials of Theorem \ref{thm: polynomialvalues}.

\section{Unit distribution and natural appearance of Impacted Buildings}\label{sec: Unit distribution and natural appearance of Impacted Buildings}

Notice that the polynomials found in Section \ref{sec: Explicit computations of generating functions} coincide with those arising in Theorem~\ref{thm: polynomialvalues}. Moreover, the recursive approach in each case is similar. 
The Building-theoretic approach in Section~\ref{sec: Explicit computations of generating functions} uses the recurrence relation appearing from the $n$th layer polynomial and $(n-1)$st basin polynomial to construct the $n$th basin generating function. Similarly, the ideal-theoretic perspective in Section~\ref{sec: Order polynomials for GL(2)  and their computations via ideal types} induces this recurrence relation using the traveling map.

This in particular suggests that the principal ideal zeta function \textit{is} a layer generating function evaluated at $X = q^{-s}$. The purpose of Theorem \ref{thm: principal zeta = vertex zeta} below is to prove this result. The proof of this theorem depends on a dictionary between zeta functions and impacted buildings; this is the objective of the current section. Specifically, we outline how a quadratic reduced algebra $L/K$ translates to an impacted building structure (with way out), in the usual Bruhat-Tits building of $\SL(2, K)$.

\subsection{The unit structure}

The choice of $\Delta$ amounts to a choice of basis of $L$ over $K$. Thus, it corresponds to a linear isomorphism
\begin{equation*}
        \begin{tikzcd}
             L \arrow{r} & K \times K\\
             x + y\Delta \ar[r, mapsto] & (x, y)
        \end{tikzcd}
\end{equation*}
In particular, $\cO_K-$modules of rank $2$ on each side correspond with each other; thus, we have an isomorphism of buildings. In particular, we see that we are working on $\mathcal{T}(q)$, the Bruhat-Tits building of $\SL(2, K)$.

The choice of $\Delta$, which puts coordinates on $L$, identifies the $\cO_K$-lattices 
\begin{equation*}
    O_{n} = \{x + yp^n\Delta \mid x, y\in\cO_K\}, \; n \in \mathbb{Z}.
\end{equation*}
with the vertices of the fundamental apartment (on the right hand side), corresponding to the canonical basis 
\begin{equation*}
   \{e_1 = (1, 0), e_2 = (0, 1)\},
\end{equation*}
and whose fundamental chamber (by choice) is given the edge with vertices $\cO_0$ and $\cO_1$. 

Our goal now is the locate the main sequence of orders, as well as the corresponding ideals in the building of $\SL(2, K)$. In what follows, we shall many times refer to vertices. When we do this we are always thinking on the tree of $\SL(2, K)$ under the above identification. In general, we denote the minimal polynomial of $\Delta$ by $X^2 - \tau X + \delta$ with $\tau, \delta\in \cO_K$. 

The following families of maps will allow us to understand how the structure of the tree changes in each case to accommodates to the structure of the ideals. 

\begin{definition}\label{dfn: slopmap}
    Let $n\ge 1$ be an integer. We define the map $M_n: \cO_n^*\longrightarrow \cO_K/p\cO_K$ by
    \begin{equation*}
        M_n(w + zp^{n}\Delta) = zw^{-1}.
    \end{equation*}
    We call it the \defbf{$n$-th slope map}. Notice this is well defined because $w\in\cO_K^*$.
\end{definition}

\begin{proposition}\label{prop: the slopes map}
   For $n\ge 1$, the $n$-th slope map is an epimorphism with $\ker(M_n) = \cO_{n+1}^*$.
\end{proposition}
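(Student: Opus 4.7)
The plan is to establish, in order: (i) well-definedness of $M_n$, (ii) the homomorphism property, (iii) the description of the kernel, and (iv) surjectivity. The main computational input is the product law in $\cO_n$, which we will reduce modulo $p$ to trivialize the cross-terms that involve $p^n$ or $p^{2n}$. Throughout, we use that elements of $\cO_n$ are uniquely of the form $w + zp^n\Delta$ with $w,z\in\cO_K$, which follows from $(p^n\Delta)^2 = p^n\tau(p^n\Delta) - p^{2n}\delta \in \cO_K + p^n\Delta\cO_K$ via the minimal polynomial $X^2 - \tau X + \delta$ of $\Delta$.

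First I would show that $\alpha = w + zp^n\Delta \in \cO_n^*$ forces $w\in \cO_K^*$, so that $M_n$ is well-defined. A direct computation with the conjugate $\bar\alpha = w + zp^n\tau - zp^n\Delta$ gives
\begin{equation*}
N(\alpha) = w^2 + \tau w z p^n + \delta z^2 p^{2n}.
\end{equation*}
Since $n\ge 1$, we have $N(\alpha) \equiv w^2 \pmod{p}$. A standard argument (the inverse in $L$ is $\bar\alpha/N(\alpha)$, which lies in $\cO_n$ exactly when $N(\alpha)\in\cO_K^*$) then identifies $\cO_n^*$ with those $w + zp^n\Delta$ for which $w\in\cO_K^*$. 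In particular $w^{-1}\bmod p$ makes sense, so $M_n$ is a well-defined map $\cO_n^*\to\cO_K/p\cO_K$.

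Next, to verify that $M_n$ is a group homomorphism from $(\cO_n^*,\cdot)$ to $(\cO_K/p\cO_K,+)$, I compute
\begin{equation*}
(w_1 + z_1 p^n\Delta)(w_2 + z_2 p^n\Delta) = (w_1w_2 - \delta z_1z_2 p^{2n}) + (w_1z_2 + w_2z_1 + \tau z_1z_2 p^n)\,p^n\Delta,
\end{equation*}
using $\Delta^2 = \tau\Delta - \delta$ to reintroduce the basis. Since $n\ge 1$, reducing the resulting quotient modulo $p$ collapses the $p^n$ and $p^{2n}$ terms, giving $M_n(\alpha_1\alpha_2)\equiv (w_1z_2 + w_2z_1)(w_1w_2)^{-1} = z_1w_1^{-1} + z_2w_2^{-1} = M_n(\alpha_1) + M_n(\alpha_2)\pmod{p}$, as desired.

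Finally, for the kernel and image: $M_n(w + zp^n\Delta) = 0$ iff $z \equiv 0\pmod{p}$ (since $w^{-1}$ is a unit), and writing $z = pz'$ identifies the element with $w + z'p^{n+1}\Delta$. Because $w\in\cO_K^*$ is unchanged, the characterization of units above (applied now at level $n+1$) shows that this is precisely the condition for $\alpha\in\cO_{n+1}^*$. For surjectivity, given $\bar c\in\cO_K/p\cO_K$, lift to $c\in\cO_K$; then $\alpha = 1 + cp^n\Delta\in\cO_n^*$ has $M_n(\alpha) = \bar c$. The main (but mild) obstacle is step (i): one must be careful in the split case, where ``norm'' and ``conjugate'' must be unwound componentwise, but the conclusion $\alpha\in\cO_n^*\Leftrightarrow w\in\cO_K^*$ holds uniformly, and the remaining arithmetic is immediate.
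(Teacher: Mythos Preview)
Your proof is correct and follows essentially the same approach as the paper: the product computation using $\Delta^2 = \tau\Delta - \delta$ and the reduction modulo $p$ are identical, as are the surjectivity and kernel arguments. The only difference is that you supply a norm argument for the well-definedness claim $w\in\cO_K^*$, which the paper simply asserts in the definition of $M_n$.
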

\begin{proof}
    Let $x_1 + y_1p^n\Delta$ and $x_2 + y_2p^n\Delta$ be units of $\cO_n$. Then we have
    \begin{align*}
        M_n((x_1 + y_1p^n\Delta)(x_2 + y_2p^n\Delta))
        &= M_n(x_1x_2 +  x_1y_2p^n\Delta + y_1x_2p^n\Delta + y_1y_2p^{2n}(\tau\Delta - \delta))\\
        &= M_n((x_1x_2 - y_1y_2p^{2n}\delta) + (x_1y_2 + y_1x_2 + y_1y_2p^n\tau)p^n\Delta)\\
        &= (x_1y_2 + y_1x_2 + y_1y_2p^n\tau)(x_1x_2 - y_1y_2p^{2n}\delta)^{-1} \pmod{p}\\
        &= (x_1y_2 + y_1x_2))(x_1x_2)^{-1}\pmod{p}\\
        &= x_2^{-1}y_2 + x_1^{-1}y_1\pmod{p}\\
        &= M_n(x_1 + y_1p^n\Delta) + M_n(x_2 + y_2p^n\Delta).
    \end{align*}
This proves it is an homomorphism. Furthermore, that is is surjective follows by picking $w = 1$ and varying $z$ in a set of representatives of $\cO_K/p\cO_K$.

The kernel consists of those units with $M_n(x + yp^n\Delta)\equiv 0\pmod{p}$. That is, such that $yx^{-1}\equiv 0\pmod{p}$, which in turn implies $y\equiv 0 \pmod{p}$. This concludes the proof.
\end{proof}

\begin{corollary}\label{cor: units base step}
    Let $n\ge 1$ be an integer and $R(p)\subset \cO_K\subset\cO_L$ a fixed set of representatives of $\cO_K/p\cO_K$.  Then the units
    \begin{equation*}
        u_t^{(n)} := 1 + tp^n\Delta,\, t\in R(p),
    \end{equation*}
    are a set of coset representatives of $\cO_n^*/\cO_{n+1}^*$. In particular, the order of $\cO_n^*/\cO_{n+1}^*$ is $q$ for all $n\geq 1$.
\end{corollary}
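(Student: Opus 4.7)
The plan is to deduce the corollary directly from Proposition \ref{prop: the slopes map} via the first isomorphism theorem. Since that proposition establishes $M_n$ as a surjective homomorphism with kernel $\cO_{n+1}^*$, the induced map $\overline{M}_n : \cO_n^*/\cO_{n+1}^* \to \cO_K/p\cO_K$ is an isomorphism of abelian groups. The second assertion, that $|\cO_n^*/\cO_{n+1}^*| = q$, is then immediate from $|\cO_K/p\cO_K| = q$.

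For the first assertion, I will simply compute $M_n(u_t^{(n)})$ directly. Writing $u_t^{(n)} = 1 + tp^n\Delta$ in the form $w + zp^n\Delta$ with $w=1$ and $z=t$, the definition of $M_n$ gives $M_n(u_t^{(n)}) = t \cdot 1^{-1} \equiv t \pmod{p}$. Since $R(p)$ is a transversal for $\cO_K/p\cO_K$ by hypothesis and $\overline{M}_n$ is a bijection, the cosets $u_t^{(n)}\cO_{n+1}^*$ as $t$ ranges over $R(p)$ are pairwise distinct and exhaust $\cO_n^*/\cO_{n+1}^*$. This gives the desired set of coset representatives.

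The one technical point to check is that each $u_t^{(n)}$ actually lies in $\cO_n^*$ (and not merely in $\cO_n$), so that $M_n$ can legitimately be applied to it. The quickest route is to observe that $u_t^{(n)} \equiv 1 \pmod{p\cO_L}$: in the nonsplit case this places $u_t^{(n)}$ outside the maximal ideal of the local ring $\cO_n$, while in the split case both components are $1$-units in $\cO_K$. Alternatively, one may compute the norm $N_{L/K}(u_t^{(n)}) = 1 + t p^n \tau + t^2 p^{2n} \delta \equiv 1 \pmod{p}$, which is a unit in $\cO_K$, and then check that the inverse $(1 + tp^n \overline{\Delta})/N_{L/K}(u_t^{(n)})$ lies in $\cO_n$ by expanding $\overline{\Delta} = \tau - \Delta$. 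This verification is the only place where the split/nonsplit cases intrude, and is the main---though very mild---obstacle in the proof.
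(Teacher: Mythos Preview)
Your proof is correct and follows the same approach as the paper: both simply apply the First Isomorphism Theorem to the slope map of Proposition~\ref{prop: the slopes map}. Your write-up is more explicit, in particular in verifying that each $u_t^{(n)}$ is actually a unit of $\cO_n$, a point the paper silently absorbs into the surjectivity argument for $M_n$.
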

\begin{proof}
    This is the First Isomorphism theorem applied to Proposition~\ref{prop: the slopes map}.
\end{proof}

When $n=0$, the slope map as stated only makes sense when $L/K$ is a totally ramified extension. By Proposition~\ref{valuesindices}, the quotient $\cO_0^*/\cO_1^*$ cannot inject into $\cO_K/p\cO_K$ when $L$ is unramified, for example.
At this point, we note that the distinction at $n=0$ is where the difference in structures arises. Moreover, an explicit map is not necessary; rather, we simply compute the cardinality of the quotient. To do this, Proposition~\ref{valuesindices} applies at once.

 \begin{proposition}\label{prop: indices n = 0}
     The cardinality of $\cO_0^*/\cO_1^*$ is given by 
     \begin{equation*}
         [\cO_0^*:\cO_1^*] = \begin{cases}
             q & \text{in the ramified case,}\\
             q + 1 & \text{in the unramified case,}\\
             q - 1 & \text{in the split case.}\\
         \end{cases}
     \end{equation*}
 \end{proposition}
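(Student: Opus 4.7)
The plan is essentially to observe that this proposition is the $n=1$ instance of Proposition~\ref{valuesindices}, which has already been stated (with attribution to \cite{MEELZETA}). Setting $n=1$ in each of the three formulas of Proposition~\ref{valuesindices}, we obtain $q^1 = q$ in the ramified case, $(q+1)q^{0} = q+1$ in the unramified case, and $(q-1)q^{0} = q-1$ in the split case, which is precisely the claim. So no new argument is required; one simply specializes the previously cited result.

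If one wanted to give a self-contained proof rather than quoting, the strategy would proceed by different methods in each case. In the ramified case, $\cO_0 = \cO_L$ has a uniformizer $\pi$ with $\pi^2$ an associate of $p$, so $\cO_0^*/\cO_1^*$ can be identified with a quotient inside $\cO_L/\pi^2\cO_L$, leading to a count of $q$. In the unramified case, $\cO_0^* \to (\cO_L/p\cO_L)^*$ has kernel $1 + p\cO_L$, and since the residue field of $L$ is the quadratic extension of $\F_q$, one obtains order $q^2 - 1$ modulo the image of $\cO_K^*$; dividing by $q-1$ yields $q+1$. In the split case $L = K \times K$, so $\cO_0^* = \cO_K^* \times \cO_K^*$, the subgroup $\cO_1^* = \{(a,b) : a \equiv b \pmod{p}\}$, and the quotient is naturally isomorphic to $(\cO_K/p\cO_K)^* = \F_q^*$, giving $q-1$.

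The main (and only) obstacle here is organizational: the proposition is really a compatibility check that singles out the base case $n=0 \to n=1$, which cannot be produced by the slope map $M_n$ of Definition~\ref{dfn: slopmap} because $M_0$ is not well defined outside the ramified case (one cannot reduce $yx^{-1}$ modulo $p$ when $x$ and $y$ both lie in $\cO_L$ with denominators involving $\Delta$). Thus the point of the proposition is to record that, despite the breakdown of the uniform recursion at $n=0$, the right values are still available from the general index calculation of Proposition~\ref{valuesindices}. For this reason the proof is essentially a one-line citation, and the subsequent sections will use these three base values together with the uniform factor $q$ coming from Corollary~\ref{cor: units base step} to build up $[\cO_0^*:\cO_n^*]$ recursively.
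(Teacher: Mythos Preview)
Your proposal is correct and matches the paper's approach exactly: the paper states just before this proposition that ``Proposition~\ref{valuesindices} applies at once,'' and the proposition is recorded without a separate proof environment, i.e., it is simply the specialization $n=1$ of Proposition~\ref{valuesindices}. Your additional sketch of direct arguments in each case is extra but not needed.
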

\begin{remark}
The slope maps are used to describe jumps from $\scR_n$ to $\scR_{n+1}$; however, when $n=0,$ this descends to describing the different ways to leave the basin $\scP$. 
\end{remark}

\begin{definition}
    We extend our previous notation and denote by $u_t^{(0)}$ a fixed set of representatives of $\cO_0^*/\cO_1^*$ as $t$ varies in $R_0(p) := \{0, 1, ..., [\cO_0^*:\cO_1^*]-1\}$. 
\end{definition}

\begin{remark}
    When $L/K$ is ramified, it is sufficient to take $R_0(p) = R(p).$
\end{remark}

\begin{definition}\label{def: general multi index}
 Let $1\le d \le n$. We call a $d-$tuple,
   \begin{equation*}
       \alpha = (\alpha_{n-d}, \cdots, \alpha_{n-1}),
   \end{equation*}
   where $\alpha_i\in R(p)$ for $0 < n-d\le i \le n-1$  and $\alpha_0\in R_0(p)$, a \defbf{unit index} of length $d$ (or simply index if there is no chance for confusion). The set of unit indices of length $d$ is denoted by $I_d(n)$. In particular, $I(n) := I_n(n)$. Associated to each unit index of length $d$, say  $\alpha\in I_d(n)$, we define
    \begin{equation}\label{eq: def unit}
    u_{\alpha} := u_{\alpha_{n-d}}^{(n-d)}u_{\alpha_{n-d+1}}^{(n-d+1)}\cdots u_{\alpha_{n-1}}^{(n-1)}.
    \end{equation}  
\end{definition}

With this at hand we can describe a set of representatives of $\cO_{n-d}^*/\cO_n^*$. 
\begin{theorem}\label{thm: unit representatives low}
    Let $n\ge 0$ be an integer and $0\le d \le n$. A set of representatives of $\cO_{n-d}^*/\cO_{n}^*$ is
    \begin{equation*}
        \{u_{\alpha} \mid \alpha \in I_d(n)\}.
    \end{equation*}
\end{theorem}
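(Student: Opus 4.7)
The plan is induction on $d$, using the chain of unit subgroups $\cO_n^* \subseteq \cO_{n-1}^* \subseteq \cdots \subseteq \cO_{n-d}^*$ and peeling off one quotient at a time. The base case $d = 0$ is trivial under the convention that the empty tuple yields the empty product $u_\emptyset = 1$, which represents the unique coset of the trivial group $\cO_n^*/\cO_n^*$.

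For the inductive step, assuming the result for some $d$ with $d < n$, I take any $w \in \cO_{n-d-1}^*$ and first reduce modulo $\cO_{n-d}^*$. By Corollary \ref{cor: units base step} when $n-d-1 \geq 1$, or by the convention following Proposition \ref{prop: indices n = 0} fixing representatives $u_t^{(0)}$ of $\cO_0^*/\cO_1^*$ when $n-d-1 = 0$, there is a unique $t \in R_{n-d-1}$ with $w \equiv u_t^{(n-d-1)} \pmod{\cO_{n-d}^*}$. Writing $w = u_t^{(n-d-1)}v$ with $v \in \cO_{n-d}^*$ and applying the inductive hypothesis to $v$ yields a unique $\beta \in I_d(n)$ with $v \equiv u_\beta \pmod{\cO_n^*}$. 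The concatenation $\alpha = (t, \beta_{n-d}, \ldots, \beta_{n-1}) \in I_{d+1}(n)$ then satisfies $w \equiv u_\alpha \pmod{\cO_n^*}$.

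Uniqueness follows by peeling the layers in reverse: if $u_\alpha \equiv u_{\alpha'} \pmod{\cO_n^*}$ for $\alpha, \alpha' \in I_{d+1}(n)$, then reducing modulo $\cO_{n-d}^*$ annihilates every factor $u_{\alpha_i}^{(i)}$ with $i \geq n-d$, leaving $u_{\alpha_{n-d-1}}^{(n-d-1)} \equiv u_{\alpha'_{n-d-1}}^{(n-d-1)} \pmod{\cO_{n-d}^*}$, which forces $\alpha_{n-d-1} = \alpha'_{n-d-1}$ by the coset uniqueness at that level. Cancelling and invoking the inductive hypothesis finishes the equality of the remaining entries. As a sanity check, $|I_d(n)|$ equals $q^d$ for $d < n$ and $q^{n-1}[\cO_0^*:\cO_1^*]$ for $d = n$, matching $[\cO_{n-d}^*:\cO_n^*]$ as computed via Proposition \ref{valuesindices} and Corollary \ref{cor: units base step}, so no cosets are missed.

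The main subtlety I anticipate lies at the bottom layer $i = 0$: Proposition \ref{prop: the slopes map} is explicitly stated only for $n \geq 1$, and in the unramified and split cases there is no clean multiplicative description of $u_t^{(0)}$ analogous to $1 + t p^n\Delta$. The argument accommodates this by invoking only the abstract coset-decomposition of $\cO_0^*/\cO_1^*$ at that step, with the correct cardinality furnished by Proposition \ref{prop: indices n = 0}; the explicit formula is never needed there. Everything else is routine bookkeeping about telescoping products in a tower of normal (in fact abelian) subgroups.
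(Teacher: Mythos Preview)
Your proof is correct and follows essentially the same approach as the paper: both arguments exploit the filtration $\cO_n^* \subset \cO_{n-1}^* \subset \cdots$ together with the known coset representatives at each successive quotient, peeling off one layer at a time. Your version is organized as a clean induction on $d$ (peeling from the bottom index and proving existence and uniqueness separately), whereas the paper peels from the top index and argues only injectivity, leaving surjectivity to the implicit cardinality match; these are cosmetic differences rather than a genuinely different route.
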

\begin{proof}
    For notational simplicity, we will present the proof for $d = n$; the other cases are analogous. Let $\alpha, \beta \in I(n)$ be given and suppose $u_{\alpha}$ and $u_{\beta}$ are equivalent modulo $\cO_{n}^*$. Pick $\alpha', \beta'\in I_{n-1}(n)$ such that
    \begin{equation*}
        \alpha = (\alpha', \alpha_{n-1}),\, \beta = (\beta', \beta_{n-1}).
    \end{equation*}
    Unraveling Definition~\ref{def: general multi index} yields that $u_{\alpha} = u_{\alpha'}u_{\alpha_{n-1}}^{(n-1)},  u_{\beta} = u_{\beta'}u_{\beta_{n-1}}^{(n-1)}$. Equivalently, we have
    \begin{equation*}
          u_{\alpha}u_{\beta}^{-1} = u_{\alpha'}u_{\beta'}^{-1}u_{\alpha_{n-1}}^{(n-1)}\left(u_{\beta_{n-1}}^{(n-1)}\right)^{-1} \in \cO_n^*.
    \end{equation*}
    Furthermore, $\cO_{n}^*\subset \cO_{n-1}^*$ and $u_{\alpha_{n-1}}^{(n-1)}\left(u_{\beta_{n-1}}^{(n-1)}\right)^{-1}\in \cO_{n-1}^*$ (because each of the factors is). From these, we conclude that
    \begin{equation*}
        u_{\alpha'}u_{\beta'}^{-1} \in \cO_{n-1}^*.
    \end{equation*}
    That is, $u_{\alpha'}$ and $u_{\beta'}$ are equivalent modulo $\cO_{n-1}^*$. Repeating this process, we conclude that $u_{\alpha_0}^{(0)}$ is equivalent to $u_{\beta_0}^{(0)}$ modulo $\cO_1^*$. It follows that $\alpha = \beta$, completing the proof.
\end{proof}
\subsection{The ramified case}

As of yet, we have treated the three cases simultaneously. We now pick a particular choice of $\Delta$ in each case to specify our computations. We begin with the ramified case.

\begin{proposition}\label{prop: ramvalues of delta}
    In the ramified case $L/K$, we can choose $\Delta = \pi$ a uniformizer of $L$ satisfying the minimal polynomial
    \begin{equation*}
        X^2 - \tau X + \delta = 0,
    \end{equation*}
    with with $\valp(\tau) = t \ge 1$ and $\valp(\delta) = 1$ (i.e. $\Delta$ satisfies an Eisenstein polynomial).
\end{proposition}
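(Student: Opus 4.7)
The plan is to exhibit an explicit uniformizer and verify directly that its minimal polynomial has the claimed Eisenstein shape. First I would invoke the standard structure theorem for totally ramified extensions: since $L/K$ is a ramified quadratic extension of $p$-adic fields, the ramification and inertia degrees satisfy $e = 2$, $f = 1$. In particular the residue extension is trivial, so for any uniformizer $\pi$ of $L$ the set $\{1, \pi\}$ is an $\cO_K$-basis of $\cO_L$, giving $\cO_L = \cO_K[\pi]$. Thus $\pi$ is a legitimate choice for the element $\Delta$ used in the general setup at the start of Section~\ref{sec: Order polynomials for GL(2)  and their computations via ideal types}, and the orders $\cO_n = \cO_K[p^n\pi]$ are precisely those of the main sequence in this ramified case.

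Next, I would write the minimal polynomial of $\pi$ over $K$ as $X^2 - \tau X + \delta$ with $\tau = \pi + \sigma(\pi)$ and $\delta = \pi \sigma(\pi) = \Norm_{L/K}(\pi)$, where $\sigma$ is the nontrivial element of $\Gal(L/K)$. Here I use that $L/K$ is automatically Galois: it is degree $2$ over a characteristic zero field, hence separable, hence Galois. Since $\sigma$ preserves the valuation on $L$ it sends uniformizers to uniformizers, so $\valpi(\sigma(\pi)) = 1$. This gives $\valpi(\delta) = 2$, and translating through $e = 2$ yields $\valp(\delta) = 1$, which is the Eisenstein condition on the constant term.

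For the coefficient $\tau$, I would simply apply the ultrametric inequality: both $\pi$ and $\sigma(\pi)$ have $\pi$-valuation at least $1$, so $\valpi(\tau) \geq 1$. Combined with $\tau \in \cO_K$ (as $\tau$ is the trace of an integral element), this gives $\valp(\tau) \geq 1$. Setting $t := \valp(\tau)$ then furnishes the required inequality $t \geq 1$, completing the verification of both valuation conditions.

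There is no genuine obstacle here; the entire argument reduces to the standard facts that in a totally ramified quadratic extension $\cO_L$ is generated as an $\cO_K$-algebra by a uniformizer, that the nontrivial Galois automorphism permutes uniformizers, and that the norm of a uniformizer has $p$-adic valuation equal to its $\pi$-adic valuation divided by $e$. The only subtlety worth flagging in the exposition is to be explicit that Galois acts, which is why the trace and norm identities for $\tau$ and $\delta$ make sense, and to remark that $t$ may be arbitrarily large (or even infinite, if the characteristic situation permits), since the proposition only asserts a lower bound and the exact value of $t$ depends on the specific extension.
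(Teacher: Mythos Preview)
Your argument is correct. The paper does not actually prove this proposition; it simply refers the reader to Lang's \emph{Algebraic Number Theory} (Chapter~II, Section~5) for the existence of such a uniformizer and the theory of Eisenstein polynomials. What you have written is essentially the unpacking of that citation: in a totally ramified extension a uniformizer generates $\cO_L$ over $\cO_K$, and the coefficients of its minimal polynomial are the trace and norm, whose valuations one reads off from the fact that the Galois action preserves $\valpi$. The only step worth making slightly more explicit is the passage from $\valpi(\tau)\ge 1$ to $\valp(\tau)\ge 1$: since $\tau\in K$ one has $\valpi(\tau)=2\valp(\tau)$, so $\valpi(\tau)\ge 1$ combined with integrality of $\valp(\tau)$ forces $\valp(\tau)\ge 1$. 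Your closing caveat about $t$ possibly being infinite (i.e.\ $\tau=0$) is apt and consistent with how the paper uses the proposition downstream, where only $\valp(\tau)\ge 1$ is ever needed.
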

\begin{proof}
    For a proof of the existence of such uniformizer as well as for a discussion on Eisenstein polynomials see \cite[Chapter II, Section 5]{LangANT}.
\end{proof}

We notice that Proposition~\ref{prop: the slopes map} applies, irrespective of our choosing $\Delta$. Thus, we set
\begin{equation*}
    u_t^{(0)} = 1 + t\pi, t\in R(p).
\end{equation*}
This choice simplifies computation. We emphasize we can only make this choice in the \textit{ramified} case. Let us now begin locating the ideals.

We begin by understanding the high ideals. Fix a positive integer $n$ and let $\omega$ be a high type; we recall this means $\omega \ge 2n$. Let $I\subset\cO_n$ be a high ideal with type $\omega$. Thus, all the generators $\alpha$ of $I$ satisfy
\begin{equation*}
    \valpi(\alpha) = \omega.
\end{equation*}
In particular, we can write $\alpha = \pi^{\omega}u$, for some unit $u\in\cO_0$ (notice the unit is not necessarily in $\cO_n)$. We get
\begin{equation*}
    I = \alpha\cO_n = \pi^{\omega}u\cO_n.
\end{equation*}
If we write $\omega = 2x + \delta$, with $\delta = 0, 1$ according to whether $\omega$ is even or odd, then
\begin{equation*}
    I = \alpha\cO_n = \pi^{\omega}u\cO_n = p^x\pi^{\delta}u\cO_n \sim \pi^{\delta}u\cO_n,
\end{equation*}
where $\sim$ represents equivalence of lattices in the building construction (i.e. homothetic lattices are equivalent). Furthermore, if $u_1$ and $u_2$ are equivalent modulo $\cO_n^*$, then we have
\begin{equation*}
    u_1\cO_n = u_2\cO_n.
\end{equation*} 
This leads to the following result.
\begin{proposition}\label{prop: high list of unit vertices}
    Let $u_1, u_2,..., u_l$ be a set of coset representatives of $\cO_0^*/\cO_n^*$. The $\cO_K$-lattices
    \begin{equation*}
        u_1\cO_n,\cdots, u_l\cO_n, u_1\pi\cO_n,\cdots, u_l\pi\cO_n,
    \end{equation*}
    represent pairwise distinct vertices of the building of $\SL(2, K)$.
\end{proposition}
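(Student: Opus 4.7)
The plan is to reduce the claim to a pair of valuation identities that cannot hold. Recall that vertices of the building of $\SL(2,K)$ are homothety classes of $\cO_K$-lattices, so two lattices represent the same vertex if and only if they differ by a factor in $K^*$. I will use the inclusion $\cO_K^* \subset \cO_n^*$ (so that $c\cO_n = p^{\valp(c)}\cO_n$ for any $c \in K^*$) together with the $\pi$-adic valuation on $L^*$; recall from Proposition \ref{prop: ramvalues of delta} that $\valpi(p) = e = 2$ while $\valpi(\pi) = 1$.

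First I would handle the same-row comparisons. Suppose $u_i\cO_n = c\, u_j\cO_n$ for some $c \in K^*$. Writing $c = p^k v$ with $v \in \cO_K^* \subset \cO_n^*$, this reduces to $u_i u_j^{-1} \in p^k\cO_n^*$. Applying $\valpi$ gives $0 = 2k$, so $k = 0$ and $u_i u_j^{-1} \in \cO_n^*$. Since $u_1,\dots,u_l$ are distinct coset representatives of $\cO_0^*/\cO_n^*$, this forces $i = j$. The same computation, with $u_i\pi$ and $u_j\pi$ in place of $u_i$ and $u_j$ (the extra $\pi$'s cancel), disposes of the analogous identification within the second row.

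Next I would treat the cross-row comparison, which is where the real content lies. If $u_i\cO_n = c\, u_j\pi\cO_n$ with $c \in K^*$, the same reduction yields
\begin{equation*}
u_i u_j^{-1}\pi^{-1} \in p^k\cO_n^*
\end{equation*}
for some integer $k$. Applying $\valpi$ now gives $-1 = 2k$, which is absurd. This parity obstruction, enabled by the ramification index $e = 2$, is the key engine separating the two rows.

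The hard part will only be the bookkeeping of the valuations and the containment $\cO_K^* \subset \cO_n^*$; once these are in hand, no further ingredient is required. In particular, the entire argument is driven by the Eisenstein structure of $\Delta = \pi$ recorded in Proposition \ref{prop: ramvalues of delta}, together with the definition of the $u_i$ as representatives of $\cO_0^*/\cO_n^*$.
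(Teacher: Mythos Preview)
Your proof is correct and follows essentially the same approach as the paper's: reduce the homothety $u_i\cO_n = c\,u_j\cO_n$ (or its $\pi$-twisted variant) to a statement of the form $u_iu_j^{-1}\in p^k\cO_n^*$ or $u_iu_j^{-1}\pi^{-1}\in p^k\cO_n^*$, and then use that $\valpi(K^*)=2\mathbb{Z}$ in the ramified case to force $k=0$ in the first situation and a parity contradiction in the second. Your packaging is a bit more streamlined than the paper's, which first argues $\lambda\in\cO_K$ by containment and then extracts the parity obstruction in two separate steps, but the underlying mechanism is identical.
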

\begin{proof}
    Suppose that $u_k\cO_n$ and $u_j\cO_n$ represent the same vertex in the building. By definition, there exists $\lambda\in K^*$ such that
    \begin{equation*}
        u_k\cO_n = \lambda u_j\cO_n.
    \end{equation*}
    We conclude that $\lambda\cO_n = u_j^{-1}u_k\cO_n\subset \cO_0$. In particular, $\lambda\in \cO_0$, which is to say, $\lambda\in \cO_K^*= \cO_0 \cap K^*$.

    Since $\lambda$ and $u_j^{-1}u_k$ are equivalent modulo $\cO_n,$ there exists an $o \in \cO_n$ such that $u_j^{-1}u_k = \lambda o \in \cO_0^*.$ Noting that $\cO_K^* \subset \cO_n^*,$ we find that $\lambda, o \in \cO_n^*.$ In particular,
    $$u_j^{-1}u_k \in \cO_n^*.$$
    As $u_j, u_k$ are coset representatives for $\cO_0^*/\cO_n^*$, we find they must be equal, showing that $u_1\cO_n,\cdots$, $u_l\cO_n$ are pairwise different. The same argument \say{cancelling $\pi$}, shows that
    $
        \pi u_1\cO_n,\cdots, \pi u_l\cO_n
    $
    are also pairwise different. 
    
    To conclude the proof we must now show $\pi u_j\cO_n$ is not equivalent to $ u_k\cO_n$ for any choice of $u_j, u_k$ (possibly equal).
    If they were equivalent, once more we can find $\lambda\in K^*$ with
    \begin{equation*}
        \pi u_i\cO_n = \lambda u_k\cO_n.
    \end{equation*}
    Just as before, we conclude $\lambda\in\cO_K$. We also have that there must be an element $o\in\cO_n$ with
    \begin{equation*}
        u_i\pi = \lambda u_k o.
    \end{equation*}
    Taking valuations  we get
    \begin{equation*}
        1 = \valpi(\lambda) + \valpi(o) \ge \valpi(\lambda)\ge 0
    \end{equation*}
    However, $\lambda\in \cO_K$ and thus its valuation with respect to $\pi$ is even (because of ramification). Since $\val_\pi(\lambda)\leq 1$, we conclude $\valpi(\lambda) = 0$; hence, $\lambda\in\cO_K^*$.

    The equality $\pi u_i\cO_n = \lambda u_k\cO_n$ also implies the existence of $t\in\cO_n$ with
    \begin{equation*}
        \pi u_i t = \lambda u_k,
    \end{equation*}
    which implies
    \begin{equation*}
        \pi t = \lambda u_ku_i^{-1}\in \cO_0^{*}.
    \end{equation*}
    This is impossible because the left hand side is not a unit. This contradiction concludes the proof.
\end{proof}
\begin{remark}
    We know the explicit value of $l$. Propositions \ref{valuesindices} and \ref{prop: indices n = 0} imply $ [\cO_0^*:\cO_n^*] = q^n.$
\end{remark} 

We have thus shown the following. 
\begin{corollary}\label{cor: ideals and vertices}
   Each high ideal of $\cO_n$ is equivalent to exactly one of the vertices 
   \begin{equation*}
         u_1\cO_n,\cdots, u_l\cO_n, u_1\pi\cO_n,\cdots, u_l\pi\cO_n,
  \end{equation*}
  where $u_1, u_2,..., u_l$ be a set of coset representatives of $\cO_0^*/\cO_n^*$. If $\varepsilon(I)$ is even, then $I$ is equivalent to one of
  \begin{equation*}
        u_1\cO_n,\cdots, u_l\cO_n;
\end{equation*}
    whereas is $\varepsilon(I)$ is odd, then $I$ is equivalent to one of
    \begin{equation*}
        \pi u_1\cO_n,\cdots, \pi u_l\cO_n.
\end{equation*}
\end{corollary}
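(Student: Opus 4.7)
The plan is to package together the discussion already carried out before Proposition \ref{prop: high list of unit vertices} with the distinctness result proven in that proposition. The main point is that the former produced a surjection of the set of high ideals onto the displayed list of $2l$ vertices, while the latter shows that these vertices are pairwise inequivalent in the building. Together these give the existence-and-uniqueness statement of the corollary.

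Concretely, I would first start with an arbitrary high ideal $I\subseteq\cO_n$ of type $\omega\geq 2n$ and pick a generator $\alpha$, so that $\val_\pi(\alpha)=\omega$. Writing $\omega=2x+\delta$ with $\delta\in\{0,1\}$ and using that $L/K$ is totally ramified (so $p$ and $\pi^2$ differ by a unit of $\cO_0^*$), I obtain $\alpha=p^x\pi^\delta u$ for some $u\in\cO_0^*$. Since $p^x\in K^*$, homothety gives
\begin{equation*}
I=\alpha\cO_n\;\sim\;\pi^\delta u\cO_n
\end{equation*}
as vertices in the building of $\SL(2,K)$.

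Next, since $u_1,\ldots,u_l$ is a set of coset representatives for $\cO_0^*/\cO_n^*$, there is a unique $j$ such that $u\equiv u_j\pmod{\cO_n^*}$, and consequently $u\cO_n=u_j\cO_n$ as $\cO_K$-lattices. Therefore $I\sim\pi^\delta u_j\cO_n$, which is one of the vertices listed in the statement. The parity of $\delta=\omega\bmod 2$ matches $\varepsilon(I)$, giving the even/odd dichotomy. Uniqueness of the vertex in the list is then immediate from Proposition \ref{prop: high list of unit vertices}, which asserts that the $2l$ vertices $u_j\cO_n$ and $u_j\pi\cO_n$ are pairwise distinct.

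I do not expect a substantial obstacle here, as the corollary is really a repackaging of information already produced. The one point to handle carefully is the reduction $\pi^\omega u\sim\pi^\delta u'$: the factor coming from rewriting $\pi^{2x}$ in terms of $p^x$ introduces an extra unit of $\cO_0^*$, but this can be absorbed into $u$ without affecting the parity of the exponent of $\pi$, so the parity dichotomy is preserved. Once this bookkeeping is done, the surjectivity onto the list and the distinctness from Proposition \ref{prop: high list of unit vertices} combine to give the claim.
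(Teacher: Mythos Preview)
Your proposal is correct and follows essentially the same route as the paper: the paper's own ``proof'' is simply the sentence ``We have thus shown the following,'' relying entirely on the computation $I=\pi^{\omega}u\cO_n = p^x\pi^{\delta}u\cO_n \sim \pi^{\delta}u\cO_n$ carried out just before Proposition~\ref{prop: high list of unit vertices}, together with the distinctness established there. Your write-up is in fact slightly more careful than the paper's, since you make explicit the absorption of the unit coming from $\pi^{2x}=(\text{unit})\cdot p^x$, which the paper suppresses.
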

In order to describe the location of the vertices $u_1\pi\cO_n,\cdots, u_l\pi\cO_n$ we must first find $\pi\cO_n$. To this end we prove the following result.

\begin{proposition}\label{prop: the other lattices description}
    We have the equality
    \begin{equation*}
        \pi\cO_n = p^{n+1}\cO_{-(n+1)} = \{x + yp^{-(n+1)}\pi \mid x, y \in\cO_K\}.
    \end{equation*}
    In particular, $\pi\cO_n$ and $\cO_{-(n+1)}$ represent the same vertex in the building and $\cO_0$ and $\pi\cO_0$ are adjacent.
\end{proposition}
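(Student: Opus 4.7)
The approach is direct computation using the Eisenstein relation $\pi^2 = \tau\pi - \delta$ provided by Proposition~\ref{prop: ramvalues of delta}. The plan is to first unfold $\pi\cO_n$ using this quadratic relation, then perform a change of variables to recognize the result as $p^{n+1}\cO_{-(n+1)}$, and finally deduce the vertex identification and the adjacency claim as lattice-theoretic corollaries.

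Concretely, I would write $\delta = p\epsilon$ with $\epsilon \in \cO_K^*$ (valid since $\valp(\delta) = 1$) and compute
\begin{equation*}
    \pi \cO_n = \{\,x\pi + yp^n\pi^2 \mid x, y \in \cO_K\,\} = \{\,(x + yp^n\tau)\pi - yp^{n+1}\epsilon \mid x, y \in \cO_K\,\}.
\end{equation*}
The map $(x, y) \mapsto (X, Y) := (x + yp^n\tau,\, -\epsilon y)$ is an $\cO_K$-linear automorphism of $\cO_K^{\,2}$: its inverse sends $(X, Y)$ to $(X + p^n\tau\epsilon^{-1}Y,\, -\epsilon^{-1}Y)$, both of whose coordinates lie in $\cO_K$ because $\epsilon \in \cO_K^*$. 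Substituting, I obtain
\begin{equation*}
    \pi\cO_n = \{\,X\pi + Yp^{n+1} \mid X, Y \in \cO_K\,\} = p^{n+1}\bigl(\cO_K + p^{-(n+1)}\pi\,\cO_K\bigr) = p^{n+1}\cO_{-(n+1)},
\end{equation*}
which is the desired identity.

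For the vertex claim, since $\pi\cO_n$ and $\cO_{-(n+1)}$ differ by the scalar $p^{n+1} \in K^*$, they are homothetic and hence represent the same vertex in the building of $\SL(2,K)$. For the adjacency of $\cO_0$ and $\pi\cO_0$, specializing $n = 0$ above gives $\pi\cO_0 = p\cO_{-1}$, so it suffices to show that $\cO_0$ and $\cO_{-1}$ are neighbors. Directly, $\cO_0 = \cO_K + \pi\cO_K$ and $\cO_{-1} = \cO_K + p^{-1}\pi\cO_K$, and the chain
\begin{equation*}
    p\cO_{-1} = p\cO_K + \pi\cO_K \;\subsetneq\; \cO_K + \pi\cO_K = \cO_0 \;\subsetneq\; \cO_K + p^{-1}\pi\cO_K = \cO_{-1}
\end{equation*}
has each successive quotient isomorphic to $\cO_K/p\cO_K$, of order $q$, which is the definition of adjacency in the building.

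No step in this plan is truly obstructive; the only subtlety is to keep track of the Eisenstein hypothesis $\valp(\delta) = 1$, which is exactly what guarantees that $yp^n\delta = yp^{n+1}\epsilon$ with $\epsilon$ a \emph{unit}, so that the change of variables $(x,y)\mapsto(X,Y)$ preserves $\cO_K^{\,2}$ and yields a lattice in the main sequence. Without this condition the computation would not cleanly identify $\pi\cO_n$ with $p^{n+1}\cO_{-(n+1)}$, which is precisely why the Eisenstein choice of $\pi$ in Proposition~\ref{prop: ramvalues of delta} is essential at this stage.
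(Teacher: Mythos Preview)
Your proof is correct and follows essentially the same approach as the paper: both expand $\pi(x+yp^n\pi)$ via the Eisenstein relation $\pi^2=\tau\pi-\delta$ with $\delta=p\cdot(\text{unit})$, and then exhibit an $\cO_K$-linear change of variables (your $(X,Y)=(x+yp^n\tau,-\epsilon y)$ is exactly the paper's $(b,a)=(x+yp^n\tau,-\delta_0 y)$) to identify the result with $p^{n+1}\cO_{-(n+1)}$. You go slightly further than the paper by explicitly verifying the adjacency of $\cO_0$ and $\cO_{-1}$ via the lattice chain, whereas the paper leaves that as an immediate consequence.
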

\begin{proof}
     We must prove that the equality 
    \begin{equation*}
        \pi(x + yp^n\pi) = p^{n + 1}\left(a + \dfrac{b\pi}{p^{n+1}}\right)
    \end{equation*}
    determines a correspondence of pairs $(a,b)$ and $(x,y)$ over $\cO_K.$ That is, given a pair $(a,b)\in \cO_K^2$ one may solve uniquely for $(x,y)\in \cO_K^2$ and vice versa. Expanding the equality and comparing similar powers of $p$ we get
    \begin{align*}
        -\delta yp^n &= ap^{n+1},\\
        x + yp^n\tau &= b.
    \end{align*}
    Recall that $\delta = p\delta_0$ for some $\delta_0\in\cO_K^*$, the system becomes
    \begin{align*}
        -\delta_0 y&= a,\\
        x + yp^n\tau &= b.
    \end{align*}
    From this we see that $(a, b)$ and $(x, y)$ determine each other in $\cO_K^2$.
\end{proof}
To proceed we need to understand the action of multiplying in $L$ by a unit $u_{\alpha}$ that we have constructed before. To this end, consider $1 + tp^n\pi$ with $t\in R(p)$ with $n\ge 0$. We have
\begin{align*}
    (1 + tp^n\pi)(x + y\pi) 
    &= x + y\pi + xtp^n\pi + typ^n\pi^2\\
    &= x + y\pi + xtp^n\pi + typ^n(\tau\pi - \delta))\\
    &= (x - \delta typ^n) + (y + xtp^n + ytp^n\tau)\pi.
\end{align*}
The basis elements are $1$ and $\pi$, for which the above computations reduce to
\begin{align*}
    (1 + tp^n\pi)\cdot 1    &= 1 + tp^n\pi,\\
    (1 + tp^n\pi)\cdot \pi  &= - \delta tp^n + (1 + tp^n\tau)\pi.
\end{align*}
 Thus, as a matrix, the above element corresponds to
 \begin{equation*}
     \begin{pmatrix}
         1 & - \delta tp^n\\
          tp^n & 1 + tp^n\tau
     \end{pmatrix}.
 \end{equation*}
 We shall call this matrix also $u_{t}^{(n)}$. It should always be clear from the context whether we are considering it as a unit or as a matrix. Now with this computation at hand, we can prove the following result.

 \begin{proposition}\label{prop: unit action in the building}
     Let $\alpha\in I(n)$ and let $u_{\alpha}$ denote both the unit, as defined in Equation \eqref{eq: def unit} above as well as the matrix corresponding to its action in $K\times K$. Then $u_{\alpha}$ fixes the vertices $\cO_0$ and $\pi\cO_0$.
 \end{proposition}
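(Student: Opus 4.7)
My plan is to reduce this proposition to the essentially tautological fact that multiplication by a unit in a commutative ring preserves the ring and any ideal; the only real content is the translation between ring-theoretic and matrix-theoretic language.

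First I would observe that each factor in the product $u_\alpha = u_{\alpha_0}^{(0)} u_{\alpha_1}^{(1)} \cdots u_{\alpha_{n-1}}^{(n-1)}$ lies in $\cO_L^* = \cO_0^*$. For the factors with index $k \ge 1$, this is immediate from the definition $u_{\alpha_k}^{(k)} = 1 + \alpha_k p^k \pi \in 1 + p\cO_L \subset \cO_L^*$. For the factor $u_{\alpha_0}^{(0)}$, in the ramified case our choice of representatives (made right after Proposition~\ref{prop: indices n = 0}) puts it in $\cO_0^*$ by construction. Since $\cO_0^*$ is a group, the product $u_\alpha$ itself belongs to $\cO_0^*$.

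Next I would recall the translation between multiplication in $L$ and the matrix action on $K \times K$. Under the linear isomorphism $L \to K \times K$ sending $x + y\pi$ to $(x,y)$, multiplication by $u_\alpha \in L$ becomes a $K$-linear endomorphism of $K \times K$ whose matrix is precisely the $u_\alpha$ of the statement. Consequently, for any $\cO_K$-submodule $M \subseteq L$, the image of $M$ under the matrix action is exactly the subset $u_\alpha M \subseteq L$.

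The conclusion is then immediate: since $u_\alpha \in \cO_0^*$, we have $u_\alpha \cdot \cO_0 = \cO_0$ as subsets of $L$ (not merely up to homothety), because multiplication by a unit permutes the ring. Similarly, because $L$ is commutative,
\begin{equation*}
    u_\alpha \cdot (\pi \cO_0) \;=\; \pi \cdot (u_\alpha \cO_0) \;=\; \pi \cO_0.
\end{equation*}
Both lattices are therefore fixed on the nose by the matrix $u_\alpha$, and in particular the homothety classes they represent, i.e.\ the vertices $\cO_0$ and $\pi\cO_0$ of the building of $\SL(2,K)$, are fixed. There is no substantive obstacle; the entire argument rests on identifying multiplication by $u_\alpha$ on $L$ with the matrix action on $K\times K$ and then applying the trivial identity ``unit $\cdot$ ideal $=$ ideal.''
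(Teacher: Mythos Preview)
Your proof is correct and follows essentially the same approach as the paper: both argue that $u_\alpha\in\cO_0^*$ forces $u_\alpha\cO_0=\cO_0$, and then use commutativity of $L$ to get $u_\alpha(\pi\cO_0)=\pi\cO_0$. The only cosmetic difference is that the paper first writes out the explicit $2\times 2$ matrix of each factor $u^{(m)}_t$ and checks directly that it lies in $\GL(2,\cO_K)$, whereas you bypass the matrix computation and invoke the ring-theoretic tautology ``unit $\cdot$ ring $=$ ring'' directly; this is arguably cleaner and is exactly the same underlying idea.
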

\begin{proof}
    By construction we have that
    \begin{equation*}
        u_{\alpha} = u_{\alpha_0}^{(0)}u_{\alpha_1}^{(1)}\cdots u_{\alpha_{n-1}}^{(n-1)},
    \end{equation*}
    and we have computed above that each of the matrices on the right is of the form
    \begin{equation*}
        \begin{pmatrix}
         1 & - \delta tp^m\\
          tp^m & 1 + tp^m\tau
     \end{pmatrix},
    \end{equation*}
    for some $0\le m \le n-1$ and $t\in R(p)$. Thus, it is enough to prove the result for the matrices of this type. 
    
    Because this matrix represents a unit, there is an inverse of it, which shows it belongs to $\GL(2, \cO_K)$; equivalently, we see its determinant is a unit and its entries integral. This shows it fixes $\cO_0$. Furthermore, because $u_{\alpha}$ and $\pi$ commute (as elements of $L)$ then they commute as matrices too. Thus
    \begin{equation*}
        u_{\alpha}\left(\pi\cO_0\right) = u_{\alpha}\pi\cO_0 = \pi u_{\alpha}\cO_0 = \pi\cO_0.
    \end{equation*}
\end{proof}

We can now prove the following result; recall that the vertices $u_\alpha\cO_n$ and $u_\alpha\pi\cO_n$ are located in the standard apartment.

\begin{theorem}\label{thm: vertices at distance n1}
    The set of vertices $\{u_{\alpha} \cO_n \mid \alpha\in I(n)\}$ corresponds to the vertices at distance $n$ from $\cO_0$ and whose path connecting it to $\cO_0$ does not go through $\pi\cO_0$. Analogously, the set of vertices $\{u_{\alpha} \pi\cO_n \mid \alpha\in I(n)\}$ corresponds to the vertices at distance $n$ from $\pi\cO_0$ and whose path connecting it to $\pi\cO_0$ does not go through $\cO_0$.
\end{theorem}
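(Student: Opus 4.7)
The plan is to pin down where $\cO_n$ sits in the standard apartment, transport it to every claimed vertex via the isometric action of the units $u_\alpha$, and then close the argument by a cardinality count; the second assertion will follow by the obvious symmetry.

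The first step is to verify that $\cO_n$ itself lies at distance exactly $n$ from $\cO_0$ along the half-apartment pointing away from $\pi\cO_0$. Under the basis $\{1,\pi\}$ chosen in Proposition \ref{prop: ramvalues of delta}, $\cO_n = \cO_K \oplus p^n\cO_K \subset \cO_0 = \cO_K \oplus \cO_K$, so $d(\cO_0,\cO_n) = n$ and the geodesic realizing this distance inside the standard apartment is $\cO_0,\cO_1,\ldots,\cO_n$. By Proposition \ref{prop: the other lattices description}, $\pi\cO_0$ is homothetic to $\cO_{-1}$ and hence lies on the opposite half-apartment, so this geodesic avoids $\pi\cO_0$.

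Next, I invoke Proposition \ref{prop: unit action in the building} to regard each $u_\alpha$ as a tree isometry fixing both $\cO_0$ and $\pi\cO_0$. Applying this isometry to the geodesic $\cO_0 \to \cO_n$ produces a length-$n$ geodesic from $\cO_0$ to $u_\alpha\cO_n$; since the original avoids $\pi\cO_0$ and $u_\alpha$ fixes $\pi\cO_0$, so does the image. Hence every $u_\alpha\cO_n$ lies in the set described in the theorem.

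To promote this inclusion to equality I compare cardinalities. Theorem \ref{thm: unit representatives low} with $d=n$, combined with Propositions \ref{valuesindices} and \ref{prop: indices n = 0}, shows that $\{u_\alpha : \alpha \in I(n)\}$ is a set of $q^n$ distinct coset representatives for $\cO_0^*/\cO_n^*$, while Proposition \ref{prop: high list of unit vertices} guarantees the associated vertices $u_\alpha\cO_n$ are pairwise distinct. On the building side, the tree of $\SL(2,K)$ is $(q+1)$-regular, so after deleting the edge $\{\cO_0,\pi\cO_0\}$ the rooted subtree at $\cO_0$ contributes exactly $q \cdot q^{n-1} = q^n$ vertices at depth $n$. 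The two counts agree, forcing the two sets to coincide. The analogous statement for $\{u_\alpha\pi\cO_n\}$ follows by the same three-step argument with the roles of $\cO_0$ and $\pi\cO_0$ swapped, using that $u_\alpha$ commutes with $\pi$ in $L$ to carry the transport step across the edge. The main delicate point in executing this plan is to ensure compatibility between the multiplicative action of $u_\alpha$ on lattices of $L$ and its matrix action on $K^2$ after the basis identification — implicit in Proposition \ref{prop: unit action in the building} but essential in order to legitimately speak of transporting geodesics of the tree by $u_\alpha$.
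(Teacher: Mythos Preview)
Your proposal is correct and follows essentially the same approach as the paper: locate $\cO_n$ in the standard apartment, transport the geodesic $\cO_0\to\cO_n$ by the isometric action of $u_\alpha$ (using that both $\cO_0$ and $\pi\cO_0$ are fixed), and close by the cardinality count $|I(n)|=q^n=$ number of vertices at depth $n$ in the subtree rooted at $\cO_0$ away from $\pi\cO_0$. The only minor difference is in the second assertion: the paper dispatches it by applying $\pi$ as a global tree isometry and observing that $\pi$ swaps $\cO_0$ and $\pi\cO_0$ (since $\pi^2\cO_0\sim\cO_0$), whereas you propose to re-run the three steps with roles swapped; both are valid and amount to the same symmetry.
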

\begin{proof}
    The path joining $\cO_0$ and $\cO_n$ is given precisely by the main sequence of orders. Acting on the building with $u_{\alpha}$ sends this path to the path whose endpoints are $u_{\alpha}\cO_0 = \cO_0$ and $u_{\alpha}\cO_n$. In particular, we see that the distance is preserved. Furthermore, it is not possible that this path goes through $\pi\cO_0$ because this vertex is also fixed and it does not belong to the main sequence. 

    Recall that the tree of $\SL(2, K)$ is homogeneous of degree $q + 1$. Thus, starting from $\cO_0$ there are $q^n$ vertices at distance $n$ from it that do not go through $\pi\cO_0$. We thus conclude that
    \begin{equation*}
        \{u_{\alpha} \cO_n \mid \alpha\in I(n)\}
    \end{equation*}
    correspond to \textit{all} of the vertices at distance $n$ that avoid $\pi\cO_n$. Indeed, both sets have cardinality $q^n$ and the correspondence is injective (and hence surjective too). 

    We claim that the result regarding $\{u_{\alpha} \pi\cO_n \mid \alpha\in I(n)\}$ follows from the above results by applying $\pi$ to the building. Indeed, the only fact that we must notice is that $\pi$ flips $\cO_0$ and $\pi\cO_{0}$:
    \begin{equation*}
     \pi\cdot(\pi\cO_0) = \pi^2\cO_0 = pu\cO_0 = p\cO_0 \sim \cO_0,
    \end{equation*}
    where we have used that $\pi^2 = up$ for some unit $u\in\cO_0$.
    \end{proof}
    We immediately have the following result.
    \begin{corollary}\label{cor: even and odd type land here}
    Let $n\ge 0$ be an integer. The high ideals of $\cO_n$ of even type are equivalent to vertices in $\{u_{\alpha} \cO_n \mid \alpha\in I(n)\}$, while those of odd type are equivalent to vertices in $\{u_{\alpha}\pi \cO_n \mid \alpha\in I(n)\}$. 
\end{corollary}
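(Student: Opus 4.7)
The plan is to deduce the corollary directly by combining the two main results of this section. Specifically, Corollary \ref{cor: ideals and vertices} tells us that, once we fix any complete set of coset representatives $u_1,\ldots,u_l$ of $\cO_0^*/\cO_n^*$, every high ideal of $\cO_n$ is equivalent to exactly one vertex from the list $\{u_j\cO_n\}_{j=1}^l$ (when the type is even) or from $\{u_j\pi\cO_n\}_{j=1}^l$ (when the type is odd). Meanwhile, Theorem \ref{thm: unit representatives low}, applied with $d=n$, supplies precisely such a set of representatives, namely $\{u_\alpha \mid \alpha \in I(n)\}$, since $I_n(n) = I(n)$ and $\cO_{n-d}^*/\cO_n^* = \cO_0^*/\cO_n^*$.

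First I would dispatch the even-type case: substituting the concrete list of representatives $\{u_\alpha\mid \alpha\in I(n)\}$ from Theorem \ref{thm: unit representatives low} into the first conclusion of Corollary \ref{cor: ideals and vertices} immediately identifies the vertices corresponding to high ideals of even type as $\{u_\alpha\cO_n \mid \alpha\in I(n)\}$. For the odd-type case, Corollary \ref{cor: ideals and vertices} gives that each such ideal is equivalent to some $\pi u_\alpha\cO_n$ with $\alpha\in I(n)$. Since in the ramified case $L$ is a field, $\pi$ commutes with every $u_\alpha\in \cO_0^*$, and consequently
\begin{equation*}
    \pi u_\alpha\cO_n \;=\; u_\alpha\pi\cO_n,
\end{equation*}
yielding the claimed description of the odd-type ideals.

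The main obstacle is not computational but organizational: one must ensure that distinct indices $\alpha\in I(n)$ actually produce distinct vertices of the building, rather than merely distinct cosets of units. This distinctness is exactly the content of Proposition \ref{prop: high list of unit vertices}, applied to the particular set of representatives provided by Theorem \ref{thm: unit representatives low}. The cardinalities also match: Propositions \ref{valuesindices} and \ref{prop: indices n = 0} give $[\cO_0^*:\cO_n^*] = q^n$, and $|I(n)| = q^n$ by construction (using $|R(p)| = q$ at each slot and $|R_0(p)| = q$ in the ramified case). Thus the corollary is essentially a restatement of Corollary \ref{cor: ideals and vertices} in the explicit coordinates that will be needed for the distance computations and comparison with the impacted building in the next section.
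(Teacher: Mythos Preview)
Your proof is correct and follows essentially the same route as the paper: invoke Corollary~\ref{cor: ideals and vertices} for the even/odd classification of high ideals in terms of an arbitrary set of coset representatives, then specialize to the explicit representatives $\{u_\alpha \mid \alpha\in I(n)\}$. The only cosmetic difference is that the paper cites Theorem~\ref{thm: vertices at distance n1} (which already works in the $u_\alpha$ notation) for this specialization, whereas you go directly through Theorem~\ref{thm: unit representatives low} with $d=n$; your route is arguably more transparent, and your remark on the commutativity of $\pi$ and $u_\alpha$ cleanly resolves the $\pi u_\alpha\cO_n$ versus $u_\alpha\pi\cO_n$ ordering.
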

\begin{proof}
    Such ideals are classified in Corollary~\ref{cor: ideals and vertices} and the association is given by Theorem~\ref{thm: vertices at distance n1}.

\end{proof}

Now that we understand where the high ideals land, let us locate the low ideals. Recall that a principal ideal $I\subseteq \cO_n$ is a \textit{low ideal} if its type $\omega$ satisfies $\omega \le 2n - 1$. Furthermore, by Theorem \ref{thm:Type situation} that the low ideals $I$ have types $0, 2, 4, ..., 2(n - 1)$. 

Let $I$ be a principal low ideal of type $2d$. Furthermore, let $x + yp^n\pi$ be a generator of $I$. One can verify (see, for example \cite[Propositions 4.12, Proposition 4.14]{MEELZETA}) that 
$
    \valp(x) = d.
$
Then
\begin{equation*}
    I = (x + yp^n\pi)\cO_n = (p^du + yp^n\pi)\cO_n = p^d(u + yp^{n-d}\pi)\cO_n \sim (u + yp^{n-d}\pi)\cO_n.
\end{equation*}
Notice that in the above, $u + yp^{n-d}\pi\in\cO_{n-d}^*$.  We realize that the vertices that represent the low ideals are
\begin{equation*}
    u_{\alpha}\cO_n, \; \alpha\in I_d(n),
\end{equation*}
which is to say,
\begin{equation*}
    u_{\alpha_{n-d}}^{(n-d)}u_{\alpha_{n-d+1}}^{(n-d+1)}\cdots u_{\alpha_{n-1}}^{(n-1)}\cO_n, \; \alpha\in I_d(n).
\end{equation*}
We now have the following result.
\begin{theorem}\label{thm: vertices at distance n}
    The set of vertices $\{u_{\alpha} \cO_n \mid \alpha\in I_d(n)\}$ correspond to the vertices at distance $d$ from $\cO_{n-d}$ and whose path connecting it to $\cO_{n-d}$ does not go through $\cO_{n-d-1}$. 
\end{theorem}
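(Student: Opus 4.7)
The plan is to mirror the proof of Theorem~\ref{thm: vertices at distance n1}, with $\cO_{n-d}$ now playing the role previously played by $\cO_0$ and $\cO_{n-d-1}$ replacing $\pi\cO_0$. The first step is to verify that each $u_\alpha$ with $\alpha\in I_d(n)$ fixes both $\cO_{n-d}$ and $\cO_{n-d-1}$ as vertices of the building. Every factor $u_{\alpha_j}^{(j)}$ appearing in $u_\alpha$ satisfies $j\ge n-d$, so it lies in $\cO_j^*\subseteq\cO_{n-d}^*$, whence $u_\alpha\in\cO_{n-d}^*$. Because $\cO_{n-d}\subseteq\cO_{n-d-1}$, and the inverse of a unit of $\cO_{n-d}$ already belongs to $\cO_{n-d}\subseteq\cO_{n-d-1}$, we also have $u_\alpha\in\cO_{n-d-1}^*$, so multiplication by $u_\alpha$ fixes both lattices.

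Next, I would transport the main-sequence geodesic $\cO_{n-d},\cO_{n-d+1},\ldots,\cO_n$ under the tree automorphism induced by $u_\alpha$. The image is a geodesic of the same length $d$ from $\cO_{n-d}$ to $u_\alpha\cO_n$, giving the required distance. To rule out that the image geodesic meets $\cO_{n-d-1}$, suppose $u_\alpha\cO_j=\cO_{n-d-1}$ for some $n-d<j\le n$; since the geodesic from $\cO_{n-d}$ to $\cO_{n-d-1}$ has length one, we must have $j=n-d+1$, and hence $u_\alpha\cO_{n-d+1}=\cO_{n-d-1}$. This contradicts injectivity of the action of $u_\alpha$ on vertices, since $u_\alpha$ already fixes $\cO_{n-d-1}$.

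To upgrade this to a set-theoretic identification, I still need injectivity of $\alpha\mapsto u_\alpha\cO_n$ together with matching cardinalities. Injectivity follows by imitating Proposition~\ref{prop: high list of unit vertices}: from an equality $u_\alpha\cO_n=\lambda u_\beta\cO_n$ one extracts $\lambda\in\cO_K^*\subseteq\cO_n^*$, hence $u_\beta^{-1}u_\alpha\in\cO_n^*$, and Theorem~\ref{thm: unit representatives low} forces $\alpha=\beta$. For the cardinality count, deleting the edge from $\cO_{n-d}$ to $\cO_{n-d-1}$ leaves a rooted subtree with $q$ neighbors at every step, hence $q^d$ vertices at depth $d$; in the ramified case $|I_d(n)|=q^d$ as well (with $|R_0(p)|=q$ absorbing the special index $\alpha_0$ when $d=n$), closing the argument.

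The main obstacle is precisely the observation that $u_\alpha$ fixes $\cO_{n-d-1}$ in addition to $\cO_{n-d}$; without this extra fixed vertex, one cannot pin down the direction in which the image geodesic leaves $\cO_{n-d}$. Once this is in hand, the geodesic transport and the injectivity plus counting argument are direct adaptations of the high-ideal case.
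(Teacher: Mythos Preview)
Your proposal is correct and follows essentially the same approach as the paper: fix both $\cO_{n-d}$ and $\cO_{n-d-1}$ under $u_\alpha$, transport the geodesic $\cO_{n-d},\ldots,\cO_n$, and conclude by an injectivity-plus-cardinality count. The only cosmetic difference is that the paper obtains the fixing of $\cO_{n-d-1}$ by invoking that each $u_{\alpha_i}^{(i)}$ fixes $\cO_0$ (hence the whole segment $\cO_0,\ldots,\cO_{n-d}$ is fixed), whereas you argue directly via $\cO_{n-d}^*\subseteq\cO_{n-d-1}^*$; both routes are equally valid.
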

\begin{proof}
    Notice that all the units
    \begin{equation*}
        u_{\alpha_{n-d}}^{(n-d)}, u_{\alpha_{n-d+1}}^{(n-d+1)}, \cdots, u_{\alpha_{n-1}}^{(n-1)}
    \end{equation*}
    belong to $\cO_{n-d}^*$. As a consequence,
    \begin{equation*}
        u_{\alpha_i
        ^{(i)}}\cO_{n-d} = \cO_{n-d}, \; n - d \le i \le n - 1.
    \end{equation*}
We have seen that $u_{\alpha_i^{(i)}}$ stabilizes $\cO_{0}$. Thus, under the action of $u_{\alpha}$ the whole path from $\cO_0$ to $\cO_{n-d}$ is fixed. On the other hand, the path from $\cO_{n-d}$ to $\cO_{n}$ goes under the same element to the path from $\cO_{n-d}$ to $u_{\alpha}\cO_{n}$. However, this path cannot touch $\cO_{n-d-1}$ because this vertex is fixed. 

One concludes that the vertices that can be reached are those at length $d$ from $\cO_{n-d}$, as the distance is preserved, but that avoid $\cO_{n-d-1}$. All such vertices are attained by a unique unit action, as the cardinalities of the sets of coset representatives and vertices are equal. 
\end{proof}

Almost all of our discussion is summarized in the following result.

\begin{theorem}\label{thm: Ideals Ram Case}
    Let $n\ge 0$. The (principal rank $2$) ideals of $\cO_n$ are equivalent to the vertices at distance $n$ from the set $\{\cO_0, \pi\cO_n\}$.
\end{theorem}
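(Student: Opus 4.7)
The plan is to show that the principal rank-$2$ ideals of $\cO_n$ correspond exactly to the vertices in $\scR_n$, i.e., to the vertices at distance $n$ from the basin edge $\{\cO_0,\pi\cO_0\}$, by combining the type classification of Theorem~\ref{thm:Type situation} with the explicit tree locations established in Theorems~\ref{thm: vertices at distance n1} and~\ref{thm: vertices at distance n}. The argument has two halves: a containment showing every principal ideal lands in $\scR_n$, and a counting step showing that nothing in $\scR_n$ is missed.

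For the containment, I would split the principal ideals into the three families guaranteed by Theorem~\ref{thm:Type situation}: high ideals of even type, high ideals of odd type, and low ideals of type $2d$ with $0\le d\le n-1$. For a high even ideal, Corollary~\ref{cor: ideals and vertices} combined with Theorem~\ref{thm: vertices at distance n1} places its vertex at distance $n$ from $\cO_0$ along a path that avoids $\pi\cO_0$; in a tree this forces the distance to $\pi\cO_0$ to be $n+1$, so the distance from the basin is exactly $n$. For high odd ideals the same argument applies after interchanging the roles of $\cO_0$ and $\pi\cO_0$. For a low ideal of type $2d$, Theorem~\ref{thm: vertices at distance n} places its vertex at distance $d$ from $\cO_{n-d}$ along a path avoiding $\cO_{n-d-1}$; concatenating with the main-sequence segment $\cO_{n-d}\to\cO_{n-d-1}\to\cdots\to\cO_0$ yields a geodesic of length $n$ from the vertex to $\cO_0$, and since this geodesic never visits $\pi\cO_0$, the distance to $\pi\cO_0$ equals $n+1$. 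In every case, the vertex lies in $\scR_n$.

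For the reverse containment, I would use a counting argument together with Lemma~\ref{L: countInRamifiedCase}, which gives $|\scR_n|=2q^n$. Theorem~\ref{thm: vertices at distance n1} already produces $q^n$ distinct vertices from $\{u_\alpha\cO_n:\alpha\in I(n)\}$, all lying on the $\cO_0$-side of the basin, and another $q^n$ distinct vertices from $\{u_\alpha\pi\cO_n:\alpha\in I(n)\}$, all lying on the $\pi\cO_0$-side. These two collections are disjoint because the basin edge separates the tree into two halves; hence principal ideals already realize $2q^n$ distinct vertices in $\scR_n$, and by equality of cardinalities they exhaust $\scR_n$.

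The heavy machinery is all in the prior results of this section, so the remaining work is essentially bookkeeping. The one delicate point is verifying for the low ideals that the realized vertex is truly closer to $\cO_0$ than to $\pi\cO_0$, rather than the reverse. This follows directly from the "avoids $\cO_{n-d-1}$" clause of Theorem~\ref{thm: vertices at distance n}: any geodesic from the vertex to $\pi\cO_0$ would have to route through $\cO_0$ and hence through $\cO_{n-d-1}$, so the unique geodesic to $\cO_0$ and the $+1$ edge to $\pi\cO_0$ give the correct distances.
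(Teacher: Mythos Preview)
Your proposal is correct and follows essentially the same approach as the paper, which presents this theorem as a summary of the preceding results (Corollary~\ref{cor: ideals and vertices} and Theorems~\ref{thm: vertices at distance n1} and~\ref{thm: vertices at distance n}); your explicit containment-plus-counting organization simply makes that implicit synthesis precise. One small redundancy worth noting: the low-ideal case in your containment step is not needed, since those vertices already occur among the high-ideal vertices $\{u_\alpha\cO_n:\alpha\in I(n)\}$ covered by Theorem~\ref{thm: vertices at distance n1}.
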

\begin{remark}
    The above statement is independent of the type of $I.$ A vertex can represent ideals of both high and low type, or only of high type. So, our work is yet to be complete. The description of what types are represented by a single vertex will be answered in Theorem \ref{thm:structure of types at x}.
\end{remark}

\subsection{The unramified case }

The analysis of the remaining two cases follows the same structure as the ramified case. A common theme of our work is that much of the theory developed overlaps between cases; in what follows proofs that follow such a pattern will be omitted.

\begin{proposition}\label{prop: unramvalues of delta}
    If $L/K$ is an unramified quadratic extension, we can pick $\Delta = \sqrt{\epsilon}$,
    where $\epsilon$ is a nonsquare integral unit in $\cO_K^*$.
\end{proposition}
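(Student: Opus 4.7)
The plan is to produce $\epsilon$ explicitly via Hensel's lemma and then identify $K(\sqrt{\epsilon})$ with $L$ using the uniqueness of the unramified quadratic extension. First, I would recall that $L/K$ unramified of degree $2$ means the residue extension $\cO_L/p\cO_L$ over $\cO_K/p\cO_K$ is $\mathbb{F}_{q^2}/\mathbb{F}_q$. Assuming (as is customary in this setting) that the residue characteristic is odd, pick a nonsquare $\bar{\epsilon}\in \mathbb{F}_q^*$; such an element exists since the squaring map on $\mathbb{F}_q^*$ has image of index $2$. Lift it to any $\epsilon\in\cO_K^*$ reducing to $\bar{\epsilon}$. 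The lift $\epsilon$ is still a nonsquare in $K^*$: if $\epsilon=\alpha^2$ with $\alpha\in K^*$, then $\alpha\in\cO_K^*$ (by comparing valuations) and reducing mod $p$ would make $\bar{\epsilon}$ a square.

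Next, I would analyze the ring $\cO_K[\sqrt{\epsilon}]$. The minimal polynomial $X^2-\epsilon$ of $\sqrt{\epsilon}$ over $K$ reduces mod $p$ to $X^2-\bar{\epsilon}$, which is irreducible over $\mathbb{F}_q$ because $\bar{\epsilon}$ is a nonsquare. Hence
\begin{equation*}
\cO_K[\sqrt{\epsilon}]/p\cO_K[\sqrt{\epsilon}] \;\cong\; \mathbb{F}_q[X]/(X^2-\bar{\epsilon}) \;\cong\; \mathbb{F}_{q^2},
\end{equation*}
which is a field. This implies that $p$ remains prime in $\cO_K[\sqrt{\epsilon}]$, so $\cO_K[\sqrt{\epsilon}]$ is a discrete valuation ring with uniformizer $p$ and residue field $\mathbb{F}_{q^2}$. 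In particular, $\cO_K[\sqrt{\epsilon}]$ is integrally closed and equals the ring of integers of $K(\sqrt{\epsilon})$, and the extension $K(\sqrt{\epsilon})/K$ is unramified of degree $2$.

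Finally, I would invoke the standard fact that a nonarchimedean local field has a unique unramified extension of each finite degree (see, e.g., Lang, \emph{Algebraic Number Theory}, Chapter II). Consequently $L = K(\sqrt{\epsilon})$, and setting $\Delta=\sqrt{\epsilon}$ gives $\cO_L=\cO_K[\Delta]$, as required. The main obstacle is the case of residue characteristic $2$, where the squaring map on $\mathbb{F}_q$ is surjective and no nonsquare residual unit exists; in that case one must either choose $\Delta$ as a root of a different irreducible quadratic of unit discriminant (e.g. via a Teichmüller representative generating $\mathbb{F}_{q^2}$ multiplicatively) or handle the case separately. Since the paper's treatment of the ramified case cites Lang for an analogous existence statement, I expect the authors likewise defer this subtlety to a standard reference, or tacitly restrict to odd residue characteristic throughout.
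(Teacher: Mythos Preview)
Your argument is correct and is the standard proof of this fact in odd residue characteristic. However, the paper gives no proof whatsoever for this proposition: it states the result and moves on immediately, treating it as a well-known fact about unramified quadratic extensions. (Compare the parallel ramified-case Proposition, whose ``proof'' is simply a citation to Lang.) Your closing expectation---that the authors defer the matter to a standard reference or tacitly restrict to odd residue characteristic---is exactly what happens, except that here they do not even include the citation.
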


We once again first locate the high ideals. Fix $\omega \geq n$ so that if an ideal $I\subset \cO_n$ is of (high) type $\omega$, then 
\begin{equation*}
    I = up^{\omega}\cO_n \sim u\cO_n,
\end{equation*}
where $u\in\cO_0^*$. We once again have the following.

\begin{proposition}\label{prop: unramhigh list of unit vertices}
    Let $u_1, u_2,..., u_l$ be a set of coset representatives of $\cO_0^*/\cO_n^*$. Then the $\cO_K$-lattices
    \begin{equation*}
        u_1\cO_{n}, u_2\cO_n,\cdots, u_l\cO_n
    \end{equation*}
    represent pairwise distinct vertices of the building of $\SL(2, K)$.
\end{proposition}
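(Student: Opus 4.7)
The plan is to adapt the first half of the proof of Proposition~\ref{prop: high list of unit vertices} (the ramified case) almost verbatim. The second half of that proof, which ruled out equivalences of the form $\pi u_j\cO_n \sim u_k\cO_n$, simply does not arise here: the unramified basin $\scP_U$ is a single vertex rather than an edge, so there is only one family of lattices $u_i\cO_n$ to separate from each other.

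First I would suppose for contradiction that $u_j\cO_n$ and $u_k\cO_n$ represent the same vertex of the building. By definition of the equivalence relation on lattices, this produces a scalar $\lambda \in K^*$ with
\begin{equation*}
    u_k\cO_n = \lambda\, u_j\cO_n.
\end{equation*}
Rearranging yields $\lambda \cO_n = u_j^{-1}u_k\, \cO_n \subseteq \cO_0$, and evaluating at $1 \in \cO_n$ forces $\lambda \in \cO_0 \cap K^* = \cO_K^*$.

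Next I would exploit the fact that $\lambda$ and $u_j^{-1}u_k$ are equivalent modulo $\cO_n$: there exists $o \in \cO_n$ with $u_j^{-1}u_k = \lambda o$. Since $u_j^{-1}u_k \in \cO_0^*$ and $\lambda \in \cO_K^* \subseteq \cO_n^*$, the factor $o$ must itself be a unit in $\cO_n^*$, hence $u_j^{-1}u_k \in \cO_n^*$. But $u_j, u_k$ were chosen as distinct coset representatives of $\cO_0^*/\cO_n^*$, so this forces $u_j = u_k$, completing the argument.

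There is no real obstacle: the ring-theoretic ingredients used here ($\cO_0 \cap K^* = \cO_K^*$ and $\cO_K^* \subseteq \cO_n^*$) hold independently of the ramification behavior of $L/K$, so the unramified case is actually strictly easier than the ramified one. The only point worth double-checking is that in the unramified setting we truly only need the pairwise distinctness of the $u_i\cO_n$; this is indeed enough because, unlike the ramified case, the high ideals of $\cO_n$ of even and odd valuation will both be represented by lattices of the single form $u\cO_n$ (up to homothety by a power of $p$), with no need for a second family of vertices involving a uniformizer of $L$.
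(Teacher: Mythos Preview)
Your proposal is correct and is precisely what the paper does: its proof of this proposition consists of the single sentence ``This is proven analogously to Proposition~\ref{prop: high list of unit vertices},'' and you have written out exactly that analogous argument, correctly observing that the second family $\pi u_i\cO_n$ is absent in the unramified setting.
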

\begin{proof}
    This is proven analogously to Proposition \ref{prop: high list of unit vertices}.
\end{proof}

A basis for $\cO_L$ is $\{1, \sqrt{\epsilon}\}$; hence, for $n\ge 1$, we may consider 
\begin{align*}
    (1 + tp^n\sqrt{\epsilon}) &= 1 + tp^n\sqrt{\epsilon}\\
    (1 + tp^n\sqrt{\epsilon})\sqrt{\epsilon} &= tp^n\epsilon + \sqrt{\epsilon}.
\end{align*}
Using this, we may encode this action as the matrix \begin{equation}\label{matrixActionUnramified}
    u_t^{(n)}=\begin{pmatrix}
        1 & tp^n\epsilon \\
        tp^n & 1
    \end{pmatrix}.
\end{equation}
We can also consider the action by $u_t^{(0)}$ for $t\in R_0(p)$ as some matrix, which we do not specify.

 \begin{proposition}\label{prop: unramunit action in the building}
     Let $\alpha\in I(n)$ and let $u_{\alpha}$ denote both the unit, as defined in equation \eqref{eq: def unit} above, as well as the matrix corresponding to its action in $K\times K$. Then $u_{\alpha}$ stabilizes the vertex $\cO_0$.
 \end{proposition}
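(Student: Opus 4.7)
The plan is to mimic the structure of the proof of Proposition on unit action in the building from the ramified case: factor $u_\alpha$ via Theorem on unit representatives and show that each individual factor $u_{\alpha_i}^{(i)}$ stabilizes $\cO_0$, since stabilizers are closed under composition. The split into two subcases, $i\geq 1$ and $i=0$, parallels the split between slope maps and the base-step index computation developed earlier in this section.

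For $i\geq 1$, I would invoke the explicit matrix in Equation \eqref{matrixActionUnramified}. Its entries all lie in $\cO_K$, and its determinant is $1-t^2 p^{2i}\epsilon$. Since $p^{2i}\epsilon$ has strictly positive $p$-adic valuation, this determinant is a $1$-unit in $\cO_K^*$. Hence $u_t^{(i)}\in\GL(2,\cO_K)$, which is the stabilizer of the vertex $\cO_0$ in the Bruhat--Tits tree of $\SL(2,K)$. This is the direct analogue of the corresponding step in the ramified case and is just a matrix computation.

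For $i=0$ no matrix was written down explicitly (as the text remarks), so here I would use a basis-free argument. By construction $u_{\alpha_0}^{(0)}\in\cO_0^*=\cO_L^*$, so multiplication by $u_{\alpha_0}^{(0)}$ is an $\cO_L$-module automorphism of $\cO_L=\cO_0$; in particular it is an $\cO_K$-module automorphism, so under the basis $\{1,\sqrt{\epsilon}\}$ it corresponds to a matrix in $\GL(2,\cO_K)$. Hence $u_{\alpha_0}^{(0)}\cO_0=\cO_0$.

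Combining the two cases, the product $u_\alpha=u_{\alpha_0}^{(0)}u_{\alpha_1}^{(1)}\cdots u_{\alpha_{n-1}}^{(n-1)}$ stabilizes $\cO_0$, which is the claim. I do not anticipate any real obstacle; the only mildly delicate point is the $i=0$ case, where we cannot just read off a matrix and must instead argue abstractly that an $\cO_L^*$-element acts as an element of $\GL(2,\cO_K)$ on $\cO_L$. Note also that, unlike the ramified case, there is no analogue of the auxiliary fixed vertex $\pi\cO_0$ to worry about, which actually makes this proof shorter than its ramified counterpart.
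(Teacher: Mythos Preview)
Your proposal is correct and follows essentially the same approach as the paper: verify that each factor $u_{\alpha_i}^{(i)}$ lies in $\GL(2,\cO_K)$ by checking integral entries and unit determinant for $i\ge 1$, and handle $i=0$ abstractly via the fact that any element of $\cO_L^*$ acts on $\cO_L$ as an $\cO_K$-linear automorphism. The paper's proof is considerably terser, but the content is the same; your version simply fills in the determinant computation and spells out the basis-free argument at $i=0$.
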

\begin{proof}
    Every involved matrix has integral entries and unit determinant, thus it represents an element of $\GL(2, \cO_K)$. This also holds for $n = 0$, even if we do not know its matrix. The result follows.
\end{proof}

We now turn our attention to the low types. Theorem \ref{thm:Type situation} guarantees that the low ideals $I$ have type $0, 1,\ldots, n-1$. Let us choose $0\leq d < n$ and an ideal of type $d.$ Just as in the ramified case, if $x+yp^n\sqrt{\epsilon}$ is a generator of $I$, then $\valp(x) = d$. Thus $x=up^d,$ where $u\in \mathcal{O}_K^*$. From this it follows that
\begin{equation*}
    I = (x+yp^n\sqrt{\epsilon})\cO_n 
    = p^n(u+yp^{n-d}\sqrt{\epsilon})\cO_n 
    \sim (u+yp^{n-d}\sqrt{\epsilon})\cO_n,
\end{equation*}
with $u+yp^{n-d}\sqrt{\epsilon} \in \cO_{n-d}^*$. We thus have the following pair of results.

\begin{theorem}\label{thm: unramvertices at distance n} 
    Given $u_1,u_2\in\cO_{n-d}^*$, we have that $u_1\cO_{n} \sim u_2\cO_n$ if and only if $u_1u_2^{-1} \in \cO_n^*.$ In particular, if $u_1,...,u_l$ is a set of representatives of $\cO_{n-d}^*/\cO_n^*$, then the $\cO_K$-lattices 
    \begin{equation*}
        u_1\cO_{n}, u_2\cO_n,\cdots, u_l\cO_n
    \end{equation*}
    represent pairwise disjoint vertices of the building of $\SL(2, K)$.
\end{theorem}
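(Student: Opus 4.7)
The plan is to prove the biconditional directly and then extract the second assertion as an immediate corollary. The forward direction is essentially definitional: if $u_1 u_2^{-1} \in \cO_n^*$ then the $\cO_K$-modules $u_1\cO_n = (u_1 u_2^{-1})u_2 \cO_n$ and $u_2\cO_n$ coincide as subsets of $L$, so they certainly represent the same vertex of the building.

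For the converse, I would unravel the definition of $\sim$: assume there exists $\lambda \in K^*$ with $u_1\cO_n = \lambda u_2\cO_n$, equivalently $(u_1 u_2^{-1})\cO_n = \lambda \cO_n$. Since $u_1, u_2 \in \cO_{n-d}^* \subseteq \cO_0^* = \cO_L^*$, we have $\val_L(u_1 u_2^{-1}) = 0$ and hence $\val_L(\lambda) = 0$. The unramified hypothesis now enters decisively: because $L/K$ is unramified, the valuation $\val_L$ restricted to $K$ coincides with $\valp$, so $\valp(\lambda) = 0$, i.e.\ $\lambda \in \cO_K^* \subseteq \cO_n^*$. Consequently $\lambda \cO_n = \cO_n$, so $(u_1 u_2^{-1})\cO_n = \cO_n$, which forces $u_1 u_2^{-1} \in \cO_n^*$.

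The ``in particular'' statement then follows at once: distinct coset representatives $u_i$ of $\cO_{n-d}^*/\cO_n^*$ satisfy $u_i u_j^{-1} \notin \cO_n^*$ for $i \neq j$, so by the biconditional the lattices $u_i \cO_n$ are pairwise inequivalent and therefore represent pairwise distinct vertices. The main subtlety in the whole argument, and the point where it diverges from its ramified analogue, is the passage from $\val_L(\lambda) = 0$ to $\lambda \in \cO_K^*$. That step is automatic here because $L/K$ is unramified, but in Proposition \ref{prop: high list of unit vertices} the same deduction required a finer analysis separating even and odd $\pi$-valuations; this is why the present theorem admits a substantially shorter proof than its ramified counterpart and no auxiliary vertex class (such as the $u_\alpha \pi \cO_n$ orbit) is needed here.
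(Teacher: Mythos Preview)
Your argument is correct and is essentially the approach the paper intends: the paper states this theorem without proof, relying on the reader to adapt the ramified analogue (Proposition~\ref{prop: high list of unit vertices}), and your proof is precisely that adaptation, carried out cleanly via the $L$-valuation. One small point worth making explicit is the step ``hence $\val_L(\lambda)=0$'': from $(u_1u_2^{-1})\cO_n=\lambda\cO_n$ you get both $u_1u_2^{-1}\in\lambda\cO_n$ and $\lambda\in(u_1u_2^{-1})\cO_n$, and since $\cO_n\subseteq\cO_L$ these two containments force $\val_L(\lambda)=\val_L(u_1u_2^{-1})=0$.
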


\begin{theorem}\label{thm: low ideals UNRAM}
    The set of vertices $\{u_\alpha\cO_n: \alpha\in I_d(n)\}$ are exactly those at distance $d$ from the vertex $\cO_{n-d}$, excluding those for who the geodesic connecting these two vertices intersects the vertex $\cO_{n-d-1}$.
\end{theorem}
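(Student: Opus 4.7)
The plan is to mirror the argument used for Theorem~\ref{thm: vertices at distance n} in the ramified case, adapted to the unramified setting where the Bruhat--Tits building of $\SL(2,K)$ remains a $(q+1)$-regular tree. The structure has four steps: stabilize $\cO_{n-d}$, extend this to pointwise fixation of the geodesic $\cO_0,\ldots,\cO_{n-d}$, use isometry to transport the path $\cO_{n-d},\ldots,\cO_n$, and finish by comparing cardinalities.

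First I would observe that for $\alpha\in I_d(n)$ and every index $n-d\le i\le n-1$, the factor $u_{\alpha_i}^{(i)}=1+\alpha_i p^i\sqrt{\epsilon}$ lies in $\cO_{n-d}^{*}$, since $p^i\sqrt{\epsilon}\in p^{n-d}\sqrt{\epsilon}\,\cO_K$. Hence $u_\alpha\in\cO_{n-d}^{*}$, so $u_\alpha\cO_{n-d}=\cO_{n-d}$. Next, Proposition~\ref{prop: unramunit action in the building} gives that $u_\alpha$ also fixes $\cO_0$. Because the building is a tree, any automorphism fixing two vertices fixes the unique geodesic between them pointwise; applied here this means each $\cO_j$ with $0\le j\le n-d$ is fixed by $u_\alpha$, and in particular so is $\cO_{n-d-1}$.

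With this in hand, the path $\cO_{n-d},\cO_{n-d+1},\ldots,\cO_n$ is a geodesic of length $d$ from $\cO_{n-d}$ to $\cO_n$ that does not touch $\cO_{n-d-1}$. Its image under the isometry $u_\alpha$ is a geodesic of the same length from $u_\alpha\cO_{n-d}=\cO_{n-d}$ to $u_\alpha\cO_n$. I would then argue that this image geodesic cannot meet $\cO_{n-d-1}$: if it did, applying $u_\alpha^{-1}$ (which also fixes $\cO_{n-d-1}$ by Step~2) would place $\cO_{n-d-1}$ on the original geodesic, a contradiction. Hence every $u_\alpha\cO_n$ is indeed a vertex at distance $d$ from $\cO_{n-d}$ whose approach avoids $\cO_{n-d-1}$.

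To finish, I would close the loop by counting. By Theorem~\ref{thm: unramvertices at distance n}, the map $\alpha\mapsto u_\alpha\cO_n$ is injective, since distinct representatives of $\cO_{n-d}^{*}/\cO_n^{*}$ give distinct vertices. On the combinatorial side, the tree being $(q+1)$-regular implies that the number of vertices at distance $d$ from $\cO_{n-d}$ whose geodesic avoids $\cO_{n-d-1}$ is exactly $q\cdot q^{d-1}=q^d$; and because $d<n$ forces $n-d>0$, every entry of a unit index ranges over $R(p)$, so $|I_d(n)|=q^d$. Injectivity plus equality of cardinalities gives the desired bijection. The main obstacle I anticipate is verifying Step~3 cleanly: one must invoke both that $u_\alpha$ is a tree isometry (so it sends geodesics to geodesics) and that the fixed vertex $\cO_{n-d-1}$ sits on the ``wrong side'' of $\cO_{n-d}$; after that the cardinality bookkeeping, which is delicate only because of the $R(p)$ vs.\ $R_0(p)$ distinction at the $0$-th slot, completes the argument.
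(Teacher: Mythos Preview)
Your proposal is correct and follows essentially the same approach as the paper, which simply says ``One may reproduce the proof of Theorem~\ref{thm: vertices at distance n}.'' Your version spells out the four steps of that argument (stabilization of $\cO_{n-d}$, pointwise fixation of the geodesic to $\cO_0$, transport of the path via isometry, and the cardinality match) with a bit more care than the ramified original, including the observation that $d<n$ forces $n-d>0$ so that only $R(p)$ enters and $|I_d(n)|=q^d$; this is exactly the intended argument.
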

\begin{proof}
    One may reproduce the proof of Theorem \ref{thm: vertices at distance n}.
\end{proof}

Our discussion leads to the following theorem in this case. Once more, we emphasize that this description forgets the information associated with types.

\begin{theorem}\label{thm: Ideals Unram Case}
    Let $n\ge 0$. The (principal rank $2$) ideals of $\cO_n$ are equivalent to the vertices at distance $n$ from the set $\{\cO_0\}$.
\end{theorem}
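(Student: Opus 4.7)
The plan is to mirror the ramified proof of Theorem \ref{thm: Ideals Ram Case}, adapted to the fact that in the unramified case the basin is the single vertex $\cO_0$, so $\scR_n$ consists exactly of the vertices at distance $n$ from $\cO_0$. I would establish both inclusions: that every (principal, rank $2$) ideal of $\cO_n$ is equivalent to some vertex at distance $n$ from $\cO_0$, and conversely that every such vertex arises from an ideal.

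For the forward direction, I would split the ideals into high and low. If $I$ is a high ideal, the factorization argument preceding Proposition \ref{prop: unramhigh list of unit vertices} writes $I \sim u\cO_n$ with $u \in \cO_0^*$, and Theorem \ref{thm: unit representatives low} (with $d=n$) lets one replace $u$ by $u_\alpha$ for some $\alpha \in I(n)$. Since Proposition \ref{prop: unramunit action in the building} gives $u_\alpha \in \GL(2,\cO_K)$, this element stabilizes $\cO_0$ and acts isometrically on the tree, so $d(u_\alpha\cO_n, \cO_0) = d(\cO_n, \cO_0) = n$. If instead $I$ is a low ideal of type $d<n$, the computation immediately preceding Theorem \ref{thm: low ideals UNRAM}, combined with Theorem \ref{thm: unit representatives low}, shows $I \sim u_\alpha \cO_n$ for some $\alpha \in I_d(n)$. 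By Theorem \ref{thm: low ideals UNRAM} this vertex lies at distance $d$ from $\cO_{n-d}$ on a geodesic avoiding $\cO_{n-d-1}$; since $\cO_{n-d-1}$ is the neighbor of $\cO_{n-d}$ along the main-sequence geodesic toward $\cO_0$, the geodesic from $u_\alpha \cO_n$ to $\cO_0$ is forced through $\cO_{n-d}$, giving total distance $d + (n-d) = n$. Thus every ideal lands in $\scR_n$.

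For the converse, I would argue by counting. The building $\scT_q$ is $(q+1)$-regular, so $|\scR_n| = (q+1)q^{n-1}$ for $n\ge 1$. Proposition \ref{prop: unramhigh list of unit vertices} says the vertices $u_\alpha \cO_n$, $\alpha \in I(n)$, are pairwise distinct, while combining Proposition \ref{valuesindices} with Proposition \ref{prop: indices n = 0} yields $|I(n)| = [\cO_0^*:\cO_n^*] = (q+1)q^{n-1}$. The two cardinalities coincide, so already the high-type ideals of a single fixed type exhaust $\scR_n$, and every vertex at distance $n$ from $\cO_0$ is represented by some ideal. The case $n = 0$ is immediate since $\scR_0 = \{\cO_0\}$ and $\cO_L$ is a DVR, so every rank $2$ ideal of $\cO_0$ is homothetic to $\cO_0$.

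The main obstacle is verifying the distance claim in the low case: one has to confirm that the $u_\alpha$ with $\alpha \in I_d(n)$, built from the layers $\cO_{n-d}^* \supset \cO_{n-d+1}^* \supset \cdots \supset \cO_n^*$, act on the tree so as to fix the initial segment $\cO_0, \cO_1, \ldots, \cO_{n-d}$ of the main sequence while moving $\cO_n$ off the subtree through $\cO_{n-d-1}$. This geometric statement is what translates the algebraic unit filtration into the radial layer structure of $\scR_n$, and it is the key input that makes the counting argument on the high side match the combinatorics of the tree.
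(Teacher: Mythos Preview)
Your proposal is correct and follows essentially the same approach as the paper: the paper offers no separate proof of this theorem, stating only that ``our discussion leads to the following theorem,'' and your argument is precisely the natural elaboration of that discussion (Propositions \ref{prop: unramhigh list of unit vertices} and \ref{prop: unramunit action in the building}, Theorem \ref{thm: low ideals UNRAM}, together with the cardinality match via Proposition \ref{valuesindices}). The counting step you use for the converse is exactly the device employed in the ramified analogue, Theorem \ref{thm: vertices at distance n1}.
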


\subsection{The split case}

When considering the split case, we will discuss high and low ideals simultaneously. Let us begin as usual, by setting $\Delta$.

\begin{proposition}\label{prop: splitvalues of delta}
    In the split case $L = K\times K$ we can pick $\Delta = (1, p)$. 
\end{proposition}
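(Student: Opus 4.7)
The plan is to verify that $\Delta = (1, p)$ satisfies the defining requirement for the main sequence of orders, namely that $\cO_L = \cO_K[\Delta]$ (and hence $L = K(\Delta)$). There are two things to check: first, that $\Delta$ generates $L$ as a $K$-algebra, and second, that its $\cO_K$-algebra span is exactly the integral closure $\cO_L = \cO_K \times \cO_K$.

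For the first step, I would compute the minimal polynomial of $\Delta$ by observing that $(1,p)$ satisfies $\Delta^2 - (p+1)\Delta + p = 0$, and that the roots $1$ and $p$ of $X^2 - (p+1)X + p = (X-1)(X-p)$ are distinct in $K$. The Chinese Remainder Theorem then gives
\begin{equation*}
    K[\Delta] \cong K[X]/((X-1)(X-p)) \cong K \times K = L,
\end{equation*}
so indeed $L = K(\Delta)$. This also matches the data $\tau = p+1$, $\delta = p$ used in Section~\ref{sec: Unit distribution and natural appearance of Impacted Buildings}.

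For the second step, because $\Delta$ satisfies a monic quadratic with coefficients in $\cO_K$, the $\cO_K$-algebra $\cO_K[\Delta]$ is the $\cO_K$-module spanned by $1 = (1,1)$ and $\Delta = (1,p)$. A generic element therefore has the form
\begin{equation*}
    a + b\Delta = (a+b,\, a+bp),\qquad a, b\in\cO_K.
\end{equation*}
To show this span equals $\cO_K \times \cO_K$, given any target $(x, y)\in \cO_K\times \cO_K$ I would solve the linear system $a+b = x$, $a+bp = y$. Subtracting yields $b(p-1) = y - x$, and since $p-1$ is a unit in $\cO_K$ (its reduction modulo the maximal ideal is $-1$), one obtains $b = (y-x)/(p-1) \in \cO_K$ and hence $a = x - b \in \cO_K$. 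This exhibits $(x,y)$ as an element of $\cO_K[\Delta]$, completing the verification.

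There is essentially no obstacle here: the whole argument is a one-line linear algebra computation over $\cO_K$. The only sensitive point is the unit nature of $p-1$, which is precisely what makes the two components of $\Delta$ distinguishable modulo the maximal ideal of $\cO_K$. This observation guides the choice $\Delta = (1, p)$ over alternatives such as $(p, p^2)$: one needs the difference of the two coordinates of $\Delta$ to be a unit, so that the change-of-basis matrix from $\{1, \Delta\}$ to $\{(1,0), (0,1)\}$ is invertible over $\cO_K$.
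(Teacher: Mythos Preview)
Your proof is correct and follows essentially the same approach as the paper: both identify the minimal polynomial $X^2 - (p+1)X + p$ and then exploit that $p-1\in\cO_K^*$ to show $\{1,\Delta\}$ is an $\cO_K$-basis of $\cO_K\times\cO_K$. The paper phrases the second step as the determinant of the change-of-basis matrix $\begin{pmatrix}1&1\\1&p\end{pmatrix}$ being a unit, which is exactly the observation you make in your final paragraph; your explicit solution of the linear system is just the unpacking of that matrix inversion.
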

\begin{proof}
    We notice that $X^2 - (p+1)X + p = 0$ has roots $1$ and $p$; moreover, the determinant of the matrix
    $$\begin{pmatrix}
        1 & 1 \\ 1 & p
    \end{pmatrix}$$ lies in $\cO_K^*$. Hence, we conclude that it generates $\cO_K$, concluding our proof.
\end{proof}

Notice that in this case, a basis of $\cO_n$ is $(1, 1)$ and $p^n(1, p)$. We define for each integer $m$
\begin{equation*}
    \widetilde{\cO_m} = (1, 0)\cdot\cO_k \oplus (0, p^m)\cdot\cO_k.
\end{equation*}

\begin{proposition}\label{prop: unit action split case}
    Let $n\ge 0$. Then we have
    \begin{enumerate}
        \item The unit $u_t^{(n)}$ fixes the vertices $\cO_0$ for all $n\ge 0$.
        \item The unit $u_{\alpha}\in I_d(n)$ fixes the vertex $\cO_{n-d}$.
        \item The unit $u_t^{(n)}$ fixes the vertices $\widetilde{\cO_m}$ for all $m$.
        \item The unit $u_{\alpha}\in I_d(n)$ fixes $\widetilde{\cO_m}$ for all $m$.
    \end{enumerate}
\end{proposition}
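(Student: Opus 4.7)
The plan is to observe that all four statements reduce to the single principle that multiplication by a unit of an order $\cO$ preserves $\cO$ as a lattice, hence fixes the corresponding vertex of the building of $\SL(2, K)$. Indeed, any $\lambda \in L^{*}$ acts as a $K$-linear automorphism of $L$, and $\lambda \cO = \cO$ precisely when $\lambda \in \cO^{*}$ (since $\lambda^{-1} \cO \subseteq \cO$ as well). So the task reduces to certifying the appropriate membership in each case.

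For parts (1) and (2), this principle applies directly. We have $\cO_n^{*} \subseteq \cO_0^{*}$, because any unit of $\cO_n$ has its inverse in $\cO_n \subseteq \cO_0$; hence multiplication by $u_t^{(n)}$ preserves $\cO_0$, giving (1). For (2), each factor $u_{\alpha_i}^{(i)}$ of $u_\alpha$ satisfies $u_{\alpha_i}^{(i)} \in \cO_i^{*} \subseteq \cO_{n-d}^{*}$ because $i \geq n-d$ forces $\cO_i \subseteq \cO_{n-d}$; hence the product $u_\alpha$ lies in $\cO_{n-d}^{*}$ and therefore fixes $\cO_{n-d}$.

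For parts (3) and (4), the natural move is to work in the algebra coordinates on $L = K \times K$ determined by our choice $\Delta = (1, p)$. Under this identification, $u_t^{(n)} = 1 + tp^n \Delta$ has algebra coordinates $(1 + tp^n,\, 1 + tp^{n+1})$. For $n \geq 1$ both entries are visibly in $\cO_K^{*}$, and for $n = 0$ the chosen representatives of $\cO_0^{*}/\cO_1^{*}$ already lie in $\cO_0^{*} = \cO_K^{*} \times \cO_K^{*}$, so the same property holds uniformly. Consequently any $u_\alpha$, being a product of such units, also has both algebra coordinates in $\cO_K^{*}$. Since multiplication in $L = K \times K$ is coordinate-wise and $\widetilde{\cO_m} = \cO_K \times p^m \cO_K$ is the product of $\cO_K$-lattices in the two factors, multiplication by $u_t^{(n)}$ or $u_\alpha$ sends each factor to itself and hence preserves $\widetilde{\cO_m}$, yielding (3) and (4).

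No substantive obstacle arises; the argument is essentially bookkeeping across the two coordinate systems on $L$. The only point requiring a little care is the $n = 0$ case, where no explicit matrix formula for $u_t^{(0)}$ is written down, but the needed property (both algebra coordinates lying in $\cO_K^{*}$) is automatic from membership in $\cO_0^{*}$.
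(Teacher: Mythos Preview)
Your proof is correct. For parts (1) and (2) your argument coincides with the paper's: both note that $u_t^{(n)} \in \cO_n^* \subseteq \cO_0^*$ and that each factor of $u_\alpha$ lies in $\cO_{n-d}^*$, so the relevant lattice is preserved.

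For parts (3) and (4) the two arguments take slightly different routes. The paper writes $\widetilde{\cO}_m = (1, p^m)\cdot \cO_0$ and uses the commutativity of $L$ to compute
\[
u_t^{(n)} \cdot (1, p^m)\cdot \cO_0 = (1, p^m)\cdot u_t^{(n)}\cdot\cO_0 = (1, p^m)\cdot\cO_0,
\]
thereby reducing (3) to (1). You instead work directly in the algebra coordinates of $L = K \times K$, observing that $u_t^{(n)}$ has both components in $\cO_K^*$ and that $\widetilde{\cO}_m = \cO_K \times p^m\cO_K$ is a product of $\cO_K$-lattices, so the coordinate-wise action by a pair of units visibly preserves it. Both are short and elementary; the paper's commutativity trick reduces the new claim to an old one and would transfer to other commutative algebras without a product decomposition, while your approach exploits the split structure explicitly and handles (3) without appeal to (1). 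Your careful treatment of the $n=0$ case (where $u_t^{(0)}$ is not given by the explicit formula) via $\cO_0^* = \cO_K^* \times \cO_K^*$ is also correct.
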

\begin{proof}
    By definition, $u_t^{(n)}\cO_n^*\subset \cO_0^*$. Thus as a matrix $u_l^{(n)}$ has integral entries and unit determinant. Thus it lies in $\GL(2, \cO_K)$ and hence fixes $\cO_0$. This proves (1).

    By definition, we have that 
    \begin{equation}
    u_{\alpha} = u_{\alpha_{n-d}}^{(n-d)}u_{\alpha_{n-d+1}}^{(n-d+1)}\cdots u_{\alpha_{n-1}}^{(n-1)}.
    \end{equation}
    Each of the units on the right belongs to $\cO_{n-d}^*$. Thus they fix it as a lattice and thus as a vertex. This proves (2).

    To prove (3), notice that
    \begin{equation*}
        \widetilde{\cO}_m = (1, p^m)\cdot\cO_0.
    \end{equation*}
    As a consequence,
    \begin{equation*}
        u_t^{(n)}\cdot \widetilde{\cO}_m =  u_t^{(n)}\cdot (1, p^m)\cdot\cO_0 = (1, p^m)\cdot u_t^{(n)}\cdot\cO_0  = (1, p^m)\cdot\cO_0 = \widetilde{\cO}_m.
    \end{equation*}
    This proves (3) and, because the units in $I_d(n)$ are compositions of the units covered by (3), we also have proven (4).
\end{proof}

We now locate the ideals in the building. Recall that our ideals are always principal and have rank $2$ as $\cO_K$-modules. If $I$ is such an ideal and $\alpha = (\alpha_1, \alpha_2)$ one of its generators, we have its type is given by
\begin{equation*}
    \varepsilon(I) = (\valp(\alpha_1), \valp(\alpha_2)).
\end{equation*}
Theorem~\ref{thm:Type situation} determines if $I$ is of high type or low type.

We compute
\begin{equation*}
    I = (\alpha_1, \alpha_2)\cO_n = (p^{m_1}u_1, p^{m_2}u_2)\cO_n = (1, p^{m_2-m_1}u_2u_1^{-1})p^{m_1}u_1\cO_n \sim (1, p^{m_2-m_1}u_2u_1^{-1})\cO_n,
\end{equation*}
where $m_i = \valp(\alpha_i), i = 1, 2$ and $u_1, u_2$ are units in $\cO_K^*$. If we denote $m = m_2 - m_1$, this becomes
\begin{equation*}
    I \sim (1, u_2u_1^{-1})\cdot (1, p^m)\cdot\cO_n.
\end{equation*}
However, $(1, u_2u_1^{-1})$ is equivalent to exactly one of the units in $I(n)$. Denote this unit by $u$; then, we have
\begin{equation*}
     (1, u_2u_1^{-1})\cdot\cO_n. = u\cdot \cO_n,
\end{equation*}
because their difference is in $\cO_n^*$ which fixes $\cO_n$. 
\begin{proposition}
    All (principal, rank 2) ideals $I\subseteq\cO_n$ are equivalent to a vertex given by
    \begin{equation*}
        u_{\alpha}\cdot (1, p^m)\cdot \cO_n,
    \end{equation*}
    for some $\alpha\in I(n)$ and $m\in\mathbb{Z}$. It represents a low ideal if we can take $\alpha\in I_d(n)$ for some $1\le d \le n-1$ and $m = 0$.
\end{proposition}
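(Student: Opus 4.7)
The first assertion is essentially already established by the display chain immediately preceding the statement, so my plan is to organize that argument and then settle the low-ideal refinement. For a generator $(\alpha_1,\alpha_2)$ of $I$, write $\alpha_i = p^{m_i} u_i$ with $u_i\in\cO_K^*$, set $m = m_2 - m_1$, and factor out the nonzero scalar $p^{m_1}u_1$ to obtain
\begin{equation*}
I \;\sim\; (1,\,u_2 u_1^{-1})\cdot(1,p^m)\,\cO_n.
\end{equation*}
Since $(1,u_2 u_1^{-1})\in\cO_0^*$, Theorem~\ref{thm: unit representatives low} applied with $d=n$ produces $\alpha\in I(n)$ and $v\in\cO_n^*$ with $(1,u_2 u_1^{-1}) = u_\alpha v$. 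Because $v\in\cO_n^*$ stabilizes the lattice $\cO_n$ (and commutes with $(1,p^m)$ inside the commutative algebra $L$), we arrive at $I\sim u_\alpha(1,p^m)\cO_n$, which is the asserted form.

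For the low-ideal claim, suppose $I$ is low. By Theorem~\ref{thm:Type situation}, the low types actually realized in the split case are precisely $(d,d)$ with $0\le d\le n-1$; in particular $m_1 = m_2 = d$, which forces $m = 0$. The point still requiring argument is that the representative can be chosen in the smaller index set $I_d(n)$ rather than only in $I(n)$. For this I would use that the generator $p^d(u_1,u_2)$ must lie in $\cO_n$. Recalling that elements of $\cO_n$ are precisely pairs $(c_1,c_2)\in\cO_K\times\cO_K$ with $c_1\equiv c_2\pmod{p^n}$ (which follows from $\Delta=(1,p)$), the inclusion $p^d(u_1,u_2)\in\cO_n$ is equivalent to the congruence $u_1\equiv u_2\pmod{p^{n-d}}$, and hence to $(1,u_2u_1^{-1})\in\cO_{n-d}^*$. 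Now apply Theorem~\ref{thm: unit representatives low} to $\cO_{n-d}^*/\cO_n^*$ to extract a unique $\alpha\in I_d(n)$ representing the coset of $(1,u_2u_1^{-1})$, completing the proof.

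I expect the only nontrivial step to be the congruence observation that forces $(1,u_2u_1^{-1})\in\cO_{n-d}^*$. Everything else is bookkeeping: the homothety quotient that strips off $p^{m_1}u_1$, the commutation of $(1,p^m)$ with units, and the bijection between cosets of $\cO_{n-d}^*/\cO_n^*$ and elements of $I_d(n)$ provided by Theorem~\ref{thm: unit representatives low}. Once one recognizes that \emph{integrality of the generator inside $\cO_n$} is exactly what upgrades the representative from $I(n)$ to $I_d(n)$, the proposition becomes a direct translation between the unit-coset formalism developed earlier in this section and the type stratification of Theorem~\ref{thm:Type situation}.
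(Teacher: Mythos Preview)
Your proof is correct and follows essentially the same route as the paper's. The paper's argument is terse: it simply asserts that being a low ideal \say{guarantees that $u_2u_1^{-1}\in\cO_{n-d}^*$} and then invokes the representatives in $I_d(n)$. You supply the missing justification by observing that $\cO_n$ consists exactly of pairs $(c_1,c_2)$ with $c_1\equiv c_2\pmod{p^n}$, so integrality of the generator $p^d(u_1,u_2)$ in $\cO_n$ forces $u_1\equiv u_2\pmod{p^{n-d}}$ and hence $(1,u_2u_1^{-1})\in\cO_{n-d}^*$. This is precisely the step the paper elides, and your explanation is the natural one.
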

\begin{proof}
    It remains to prove the result about low ideals. Indeed, if $I$ is a low ideal, then $m_1 = m_2$, which proves $m = 0$. Furthermore, being a low ideal guarantees that $u_2u_1^{-1}\in\cO_{n-d}^*$ for some $1\le d \le n-1$; and thus, we can use the indices $I_d(n)$ (or, pick some continuous chain of units, starting from the left, for the index in $I(n)$ to be $1$).
\end{proof}

To conclude our description, we give the following theorem.

\begin{theorem}\label{thm: location of ideals split case}
    The set of vertices
    \begin{equation*}
        \{u_{\alpha}\cdot (1, p^m)\cdot \cO_n:\, \alpha\in I(n),\, m\in \Z\},
    \end{equation*}
    represents, exactly once, the set of vertices of the building at distance exactly $n$ from the apartment with vertices
    \begin{equation*}
        \widetilde{\cO}_m, m\in\mathbb{Z}.
    \end{equation*}
    Furthermore, the set of vertices
    \begin{equation*}
        u_{\alpha} \cdot \cO_n,
    \end{equation*}
     for some $\alpha\in I_d(n)$ with $1\le d \le n-1$ represents, exactly once, the set of vertices of the building at distance $d$ from $\cO_{n-d}$ and whose geodesic to this point doesn't go through $\cO_{n-d-1}$. Only these latter vertices can represent low ideals.
\end{theorem}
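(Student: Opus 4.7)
The plan is to prove both parts by first locating $\cO_n$ explicitly as a lattice in $K\times K$, then applying two commuting isometries (multiplication by $(1,p^m)$ and action by $u_\alpha$) to generate the claimed vertices, and finally using the unit-representative theory of Theorem~\ref{thm: unit representatives low} for injectivity. With $\Delta=(1,p)$ and the diagonal embedding $\cO_K\hookrightarrow L$, row-reducing the basis $(1,1),(p^n,p^{n+1})$ of $\cO_n$ gives $\cO_n=\cO_K(1,1)+\cO_K(0,p^n)$, equivalently $\cO_n=\{(x,y)\in\cO_K^2:y\equiv x\pmod{p^n}\}$. Writing this basis in terms of $\widetilde{\cO}_k=\cO_K(1,0)+\cO_K(0,p^k)$ and applying the elementary divisor theorem (after rescaling by $p^{k-m}$ when $k>m$) shows that $\cO_n$ is at distance $n+|k|$ from $\widetilde{\cO}_k$; in particular $\cO_n$ sits at distance $n$ from the apartment with projection $\widetilde{\cO}_0$. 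Multiplication by $(1,p^m)\in L^*$ is an isometry of the tree that sends $\widetilde{\cO}_k$ to $\widetilde{\cO}_{k+m}$, shifting the apartment by $m$; consequently $(1,p^m)\cO_n$ lies at distance $n$ from the apartment with projection $\widetilde{\cO}_m$. By Proposition~\ref{prop: unit action split case}(3) and (4), each $u_\alpha$ with $\alpha\in I(n)$ fixes every apartment vertex, and thus the apartment pointwise; hence $u_\alpha(1,p^m)\cO_n$ is again at distance $n$ with projection $\widetilde{\cO}_m$.

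For injectivity of the first map, suppose $u_\alpha(1,p^m)\cO_n\sim u_\beta(1,p^{m'})\cO_n$ as vertices. Matching projections forces $m=m'$, and the commutativity of $L$ allows us to cancel $(1,p^m)$, yielding $u_\beta^{-1}u_\alpha\cO_n\sim\cO_n$. A short calculation using the explicit description of $\cO_n$ shows this holds iff $u_\beta^{-1}u_\alpha\in\cO_n^*$, which by Theorem~\ref{thm: unit representatives low} forces $\alpha=\beta$. Since the tree is $(q+1)$-regular and the apartment is a line, each apartment vertex contributes $q-1$ transverse directions, giving $(q-1)q^{n-1}$ vertices at distance $n$ with a fixed projection; by Proposition~\ref{prop: indices n = 0} this equals $|I(n)|$, so the map is a bijection and the first claim follows.

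For the second claim, fix $d\in\{1,\ldots,n-1\}$ and $\alpha\in I_d(n)$. By Proposition~\ref{prop: unit action split case}(2), $u_\alpha\in\cO_{n-d}^*$ fixes $\cO_{n-d}$; since $\cO_{n-d}^*\subset\cO_{n-d-1}^*$, it also fixes $\cO_{n-d-1}$, and in fact the whole initial segment $\cO_0,\cO_1,\ldots,\cO_{n-d}$ of the main sequence. The path $\cO_0-\cO_1-\cdots-\cO_n$ is a geodesic of length $n$ in the tree (by iterated application of the elementary divisor argument from the first paragraph), so $u_\alpha$ maps the tail $\cO_{n-d},\ldots,\cO_n$ to a geodesic from $\cO_{n-d}$ to $u_\alpha\cO_n$ of length $d$; this new geodesic cannot pass through $\cO_{n-d-1}$ because that vertex is fixed by $u_\alpha$ and lies off the original segment. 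Injectivity proceeds exactly as in the first part, now invoking $|I_d(n)|=q^d$; counting vertices at distance $d$ from $\cO_{n-d}$ avoiding $\cO_{n-d-1}$ yields $q\cdot q^{d-1}=q^d$ (one initial choice of $q$ neighbors of $\cO_{n-d}$, then $q$ new neighbors at each subsequent step), completing the bijection. The main obstacle is the careful combinatorial bookkeeping in the first paragraph: identifying $\cO_n$ as a non-diagonal sublattice of $\cO_K^2$ (distinct from the apartment vertex $\widetilde{\cO}_n$) and disentangling the two isometric actions by $(1,p^m)$ and $u_\alpha$ whose combined orbits exhaust the vertices at distance $n$ from the apartment.
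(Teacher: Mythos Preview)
Your argument is correct and follows essentially the same strategy as the paper: use Proposition~\ref{prop: unit action split case} to see that each $u_\alpha$ fixes the apartment pointwise, translate by $(1,p^m)$ along the apartment, and finish by a counting argument. Your treatment is in fact more complete than the paper's in two places: you pin down the position of $\cO_n$ relative to the apartment by an explicit elementary-divisor computation (the paper asserts the analogous fact but does not justify it), and you give a clean injectivity argument via matching projections and cancelling $(1,p^m)$, whereas the paper simply declares the sets for different $m$ disjoint without proof. One minor slip: in your elementary-divisor sentence you wrote ``rescaling by $p^{k-m}$ when $k>m$'' but the running index is $k$ (there is no $m$ yet); you presumably meant the case $k>0$ versus $k<0$.
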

\begin{proof}
   Let $\Gamma$ be the geodesic containing $\cO_0$ and $\cO_n$. Notice that $u_\alpha$ stabilizes $\cO_0,$ so that $d(\cO_0, u_\alpha\cO_n)=n$. Moreover, $u_\alpha$ stabilizer $\widetilde{\cO}_m$, for all $m$; thus we conclude that $\Gamma$ and $\widetilde{\cO}_m$ only share a vertex when $m=0.$ It follows that
\begin{equation*}
    \{u_{\alpha}\cO_0 \mid \alpha\in I(n)\},
\end{equation*}
consists of the vertices at distance exactly $n$ from $\cO_0$ except from $\widetilde{\cO}_n$ and $\widetilde{\cO}_{-n}$. 
Now, noting that $(1,p^m)\cdot \cO_0 = \widetilde{\cO}_m$ and $u_\alpha\widetilde{\cO}_m = \widetilde{\cO}_m$, we find that 
\begin{equation}\label{eq: displacement of varying m}
    (1, p^m)\cdot  \{u_{\alpha}\cO_0 \mid \alpha\in I(n)\}
\end{equation}
consists of all those vertices at distance exactly $n$ from $\widetilde{\cO}_m$ except from $\widetilde{\cO}_{m + n}$ and $\widetilde{\cO}_{m-n}$. 
Moreover, notice that the sets in \eqref{eq: displacement of varying m} are disjoint; the first statement follows as each $\widetilde{\cO}_m$ appears in such a set. 

The second claim of the theorem follows similarly. This time there is no $m$, thus the vertices are located among those at distance $n$ from $\cO_0$. However, as our index is taken in $I_d(n)$, we have that $u_{\alpha}$ fixes $\cO_0, \cO_1,..., \cO_{n-d}$ by Proposition~\ref{prop: unit action split case}. As a consequence, the geoedesic towards $\cO_{n-d}$ has length $d$. This geodesic avoids $\cO_{n-d-1}$ because this vertex is fixed by $u_{\alpha}$. We have already seen above that these vertices admit low ideal representatives. This concludes the proof.
\end{proof}

Finally, our (almost complete) summary of the situation is the following.

\begin{theorem}\label{thm: Ideals split Case}
    Let $n\ge 0$. The (principal rank $2$) ideals of $\cO_n$ are equivalent to the vertices at distance $n$ from the set $\{\widetilde{\cO}_m : m\in\mathbb{Z}\}$.
\end{theorem}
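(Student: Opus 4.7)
The plan is to derive Theorem \ref{thm: Ideals split Case} as a clean packaging of the proposition and theorem that immediately precede it, together with a short homothety observation. The two directions of the equivalence are handled separately.

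First, I would quote the proposition just above the theorem: every principal rank $2$ ideal $I \subseteq \cO_n$ is equivalent, as a vertex of the building of $\SL(2, K)$, to one of the form $u_\alpha \cdot (1, p^m) \cdot \cO_n$ for some $\alpha \in I(n)$ and $m \in \Z$. Then I would invoke Theorem \ref{thm: location of ideals split case}, which identifies this family of vertices bijectively with those lying at distance exactly $n$ from the apartment $\{\widetilde{\cO}_m : m \in \Z\}$. Composing the two statements yields the forward direction: every principal rank $2$ ideal of $\cO_n$ is equivalent to a vertex at distance $n$ from the apartment.

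For the converse, I would check that every vertex at distance $n$ from the apartment really does represent a principal rank $2$ ideal of $\cO_n$. By Theorem \ref{thm: location of ideals split case} such a vertex is the homothety class of $\lambda \cO_n$ with $\lambda := u_\alpha \cdot (1, p^m) \in L^*$. The element $\lambda$ need not itself lie in $\cO_n$, but the vertex class is unchanged by multiplication by any $\mu \in K^*$. Choosing $\mu = p^k$ with $k$ sufficiently large forces $\mu \lambda \in \cO_n$, so $(\mu \lambda)\cO_n$ is a genuine principal rank $2$ ideal of $\cO_n$ representing the same vertex.

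The main obstacle is not in this final packaging step; it has already been dispatched inside Theorem \ref{thm: location of ideals split case}. That proof leans crucially on two facts from Proposition \ref{prop: unit action split case}: each $u_\alpha$ fixes the split apartment $\{\widetilde{\cO}_m\}$ pointwise, and the translation $(1, p^m)$ acts on this apartment as the shift $\widetilde{\cO}_k \mapsto \widetilde{\cO}_{k+m}$. Together these force the family $\{u_\alpha \cdot (1, p^m) \cdot \cO_n\}$ to sweep out, without repetition, exactly the vertices at distance $n$ from the apartment. The present theorem is then the \say{arithmetic-free} repackaging of that analysis, phrased in the form needed for the comparison with the impacted-building picture in the next section.
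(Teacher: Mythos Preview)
Your proposal is correct and matches the paper's approach: the paper states this theorem explicitly as the ``summary of the situation'' immediately following the unnamed proposition and Theorem~\ref{thm: location of ideals split case}, without giving a separate proof, so the argument is indeed just the packaging you describe. Your explicit homothety observation for the converse direction (multiplying by a large power of $p$ to land inside $\cO_n$) is a detail the paper leaves implicit but is exactly what is needed.
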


\subsection{The appearance of the Impacted Buildings}

Up to this point, we have completely described which vertices of the building are represented by the low and high ideals of a given fixed order $\cO_n$. In the three cases we have found that there is a subset of vertices, represented by some of the ideals of $\cO_n$, which can be connected by actions of \say{unit paths} on the building to the vertices of an specific set of vertices. We recall that in Definition \ref{def: impacted buildings cases defs} we have introduced three impacted buildings  $\scB_R$, $\scB_U$ and $\scB_S$. Finally, we can provide the first connection between the two perspectives. 

\begin{theorem}\label{thm: ideals in the building}
    Let $L/K$ be a reduced $K$-algebra of dimension $2$. It is either ramified, unramified or split. If $\scB$ is the corresponding impacted building, that is $\scB_R$, $\scB_U$ or $\scB_S$ respectively, then a vertex $v$ of $\scB$ represents a (principal, rank $2$) ideal of $\cO_n$ if and only if $v\in\scR_n$.
\end{theorem}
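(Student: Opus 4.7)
The approach is to combine the three case-specific location results (Theorems \ref{thm: Ideals Ram Case}, \ref{thm: Ideals Unram Case}, and \ref{thm: Ideals split Case}) with a purely combinatorial description of the layers $\scR_n$. The first step is to establish by induction on $n$ the geometric identity
\[
    \scP_n = \{v \in V(\scB) : d(v, \scP) \le n\}, \qquad \scR_n = \{v \in V(\scB) : d(v, \scP) = n\}.
\]
The base case $n=0$ is tautological. For the induction step, the containment $\scP_n \supseteq \{v : d(v,\scP) \le n\}$ is witnessed by the fact that in a tree a geodesic of length $\le n$ from $\scP$ to $v$ passes through a vertex at distance $\le n-1$ from $\scP$, placing that penultimate vertex in $\scP_{n-1}$ by induction; the reverse inclusion is immediate from the defining condition \eqref{eqn: defining nth basin} and the triangle inequality.

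The second step is to identify the abstract basins of Definition \ref{def: impacted buildings cases defs} with the concrete subcomplexes of the Bruhat-Tits tree of $\SL(2,K)$ appearing in the location theorems: in the unramified case $\scP_U$ is the single vertex $\{\cO_0\}$; in the ramified case $\scP_R$ is the edge $\{\cO_0, \pi\cO_0\}$, which is indeed a chamber by Proposition \ref{prop: the other lattices description}; and in the split case $\scP_S$ is the apartment $\{\widetilde{\cO}_m : m \in \Z\}$. Under these identifications the underlying homogeneous tree has degree $q+1$, so the impacted building $(\scT_q, \scP)$ of Definition \ref{def: impacted buildings cases defs} sits naturally inside the building of $\SL(2, K)$.

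With these two preparatory steps, the theorem follows by invoking the appropriate location result in each case: Theorems \ref{thm: Ideals Ram Case}, \ref{thm: Ideals Unram Case}, and \ref{thm: Ideals split Case} each assert that the (principal, rank $2$) ideals of $\cO_n$ are represented by exactly the vertices at distance $n$ from the relevant subcomplex, which by the first step is precisely $\scR_n$. No real obstacle remains: the genuine work, namely the construction of the bijection between ideal classes and vertices at the correct distance via the unit-path machinery, was carried out in the previous subsections. The present theorem is a unification statement that repackages those case-by-case results in the common language of impacted buildings, and its proof is therefore essentially a matter of matching notation.
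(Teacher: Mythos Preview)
Your proposal is correct and follows essentially the same approach as the paper: both reduce the statement to the three case-specific location results (Theorems \ref{thm: Ideals Ram Case}, \ref{thm: Ideals Unram Case}, and \ref{thm: Ideals split Case}). The paper's proof is a bare two-sentence reference to those theorems, whereas you additionally make explicit the identification of $\scR_n$ with the distance-$n$ set and of the abstract basins with the concrete subcomplexes $\{\cO_0\}$, $\{\cO_0,\pi\cO_0\}$, $\{\widetilde{\cO}_m\}$ --- details the paper leaves implicit but which are needed for the logic to actually close.
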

\begin{proof}
    This has been shown in a case-by-case basis. If $L/K$ is ramified, this theorem specializes to Theorem~\ref{thm: Ideals Ram Case}, if $L/K$ is unramified, this theorem specializes to Theorem~\ref{thm: Ideals Unram Case}, and if $L/K$ is split, this theorem specializes to Theorem~\ref{thm: Ideals split Case}.

\end{proof}

\section{Discussion and proof of the main theorem}\label{sec: Discussion and proof of the main theorem}

So far, we have characterized which exact vertices in the building represent ideals from $\cO_n$, for any given $n$, for all our cases of $L/K$. However, a single vertex can represent several principal ideals with different types. We must describe with precision which types are represented at each vertex in order to be able to make explicit computations.

\subsection{Types represented by a vertex}

We begin with a general definition, although we use it only in our three quadratic cases.

\begin{definition}\label{def: ramification vector}
    Let $L = L_1\times\cdots\times L_g$ be reduced finite dimensional $K$-algebra. Thus, each of $L_1,\cdots,L_g$ are field extensions of $K$ and we let $e_i$ be the ramification degree of $L_i/K$. We shall call the vector
    \begin{equation*}
        \vec{e} = \vec{e}(L/K) := (e_1,..., e_g)
    \end{equation*}
    the \defbf{translation vector} of the extension $L/K$. We also define 
    \begin{equation*}
        \vec{f} = \vec{f}(L/K) := (f_1,..., f_g)
    \end{equation*}
    and call it the \defbf{inertia vector} of the extension $L/K$.
\end{definition}
\begin{remark}
    The choice for the adjective \say{translation}, as opposed to, say, \say{ramification}, will become clear presently.
\end{remark}

In particular, for our three quadratic cases we have
    \begin{equation*}
        \vec{e}(L/K) = \begin{cases}
            1 &  \mbox{if } L/K \mbox{ is an unramified field extension},\\
            2 &  \mbox{if } L/K \mbox{ is a ramified field extension},\\
            (1, 1) & \mbox{if } L = K\times K.\\
        \end{cases}
    \end{equation*}
    and
    \begin{equation*}
        \vec{f}(L/K) = \begin{cases}
            2 &  \mbox{if } L/K \mbox{ is an unramified field extension},\\
            1 &  \mbox{if } L/K \mbox{ is a ramified field extension},\\
            (1, 1) & \mbox{if } L = K\times K.\\
        \end{cases}
    \end{equation*}

\begin{definition}
    Let $x\in\scR_n$ be a vertex. We define 
    \begin{equation*}
        \varepsilon(x) = \{\varepsilon(I) \mid I\sim x, I\subseteq\cO_n, I \mbox{ principal }\};
    \end{equation*}
    that is, $\varepsilon(x)$ is the set of types from principal ideals of $\cO_n$ represented by $x$. 
\end{definition}

We begin by describing the structure of $\varepsilon(x)$. 
\begin{lemma}\label{lem: congruence of types}
    Let $I$  and $J$ two (principal, rank 2) ideals of $\cO_n$ with $I\sim J$. Then,
    \begin{equation*}
        \varepsilon(I) \equiv \varepsilon(J) \pmod{\vec{e}}.
    \end{equation*}
    To be precise, what we mean here is that the difference $\varepsilon(I) - \varepsilon(J)$ is an integral multiple of the vector $\vec{e}$.  
\end{lemma}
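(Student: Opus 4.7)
The plan is to translate the hypothesis $I\sim J$ into an equation between generators, and then read off the difference of types as the valuation of a scalar in $K^*$, whose value is automatically a multiple of $\vec{e}$.

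First, unwind the definition of equivalence in the building: two rank-$2$ $\cO_K$-lattices in $L$ represent the same vertex precisely when they are homothetic, i.e.\ $I = \lambda J$ for some $\lambda \in K^*$. Choose generators $\alpha$ of $I$ and $\beta$ of $J$. From $\alpha\cO_n = \lambda\beta\cO_n$ one gets $\alpha = \lambda\beta o_1$ and $\lambda\beta = \alpha o_2$ with $o_1,o_2 \in \cO_n$, forcing $o_1 o_2 = 1$ and hence $u := o_1 \in \cO_n^*$. Therefore
\begin{equation*}
    \alpha = \lambda\, \beta\, u, \qquad \lambda \in K^*,\ u \in \cO_n^*.
\end{equation*}

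Next, apply the appropriate valuation, exploiting that units contribute $0$ and that the type is independent of the choice of generator. In the nonsplit case, taking $\valpi$ gives
\begin{equation*}
    \varepsilon(I) - \varepsilon(J) = \valpi(\alpha) - \valpi(\beta) = \valpi(\lambda) + \valpi(u) = \valpi(\lambda).
\end{equation*}
Since $\lambda\in K^*$ and $L/K$ has ramification degree $e$, we have $\valpi(\lambda) = e\,\valp(\lambda)$, which is an integer multiple of $\vec{e} = e$. In the split case, $\lambda$ embeds diagonally as $(\lambda,\lambda) \in K\times K = L$, and writing $\alpha = (\alpha_1,\alpha_2)$, $\beta=(\beta_1,\beta_2)$, $u=(u_1,u_2)$ with $u_1,u_2\in\cO_K^*$, the relation $\alpha = \lambda\beta u$ componentwise yields
\begin{equation*}
    \valp(\alpha_i) - \valp(\beta_i) = \valp(\lambda), \qquad i=1,2.
\end{equation*}
Hence $\varepsilon(I) - \varepsilon(J) = \valp(\lambda)\,(1,1) = \valp(\lambda)\,\vec{e}$, again an integer multiple of $\vec{e}$.

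There is essentially no obstacle here beyond bookkeeping: the only subtlety is identifying the correct embedding $K^* \hookrightarrow L^*$ when taking the valuation (through the ramification index in the nonsplit case, diagonally in the split case), which is exactly why $\vec{e}$ is called the \emph{translation} vector in Definition~\ref{def: ramification vector}: moving between equivalent ideals translates the type by a $\Z$-multiple of $\vec{e}$. Independence from the particular choice of generators $\alpha,\beta$ is immediate from the corresponding statement in Definition~\ref{def: type of an element}.
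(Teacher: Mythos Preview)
Your proof is correct and follows essentially the same approach as the paper: both translate $I\sim J$ into $I=\lambda J$ with $\lambda\in K^*$, then observe that the difference of types equals the valuation vector of $\lambda$, which is an integer multiple of $\vec{e}$ by the ramification relation (nonsplit) or the diagonal embedding (split). Your version is slightly more explicit in extracting a unit $u\in\cO_n^*$ relating the chosen generators, whereas the paper phrases this as $R(I)=\lambda\cdot R(J)$, but the content is identical.
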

\begin{proof}
    Since $I$ and $J$ are equivalent in the tree, there exists $\lambda\in K^*$ such that
    \begin{equation*}
        I = \lambda J.
    \end{equation*}
    Thus, the representatives of $I$ (as a principal ideal) are shifts of those of $J$ by $\lambda$. That is,
    \begin{equation*}
        R(I) = \lambda\cdot R(J).
    \end{equation*}
    By definition of the type of an ideal, it is the constant vector of valuations of its generators (as an ideal). In particular, taking valuations entrywise, we find
    \begin{equation*}
        \varepsilon(I) = (\val_{L_i}(\lambda))_{i = 1,..., g} + \varepsilon(J).
    \end{equation*}
    Here $g = 1$ in the field extension case and $g = 2$ in the split case. Notice that in each case $(\val_{L_i}(\lambda))_{i = 1,..., g}$ is an integer multiple of $\vec{e}$. Indeed, this vector is
    \begin{equation*}
        (\val_{L_i}(\lambda))_{i = 1,..., g} = \begin{cases}
            \valp(\lambda) &  \mbox{if } L/K \mbox{ is an unramified field extension},\\
            \valpi(\lambda)\ &  \mbox{if } L/K \mbox{ is a ramified field extension},\\
            (\valp(\lambda), \valp(\lambda)) & \mbox{if } L = K\times K.\\
        \end{cases}
    \end{equation*}
    We only need to notice that in the ramified case, $\valpi(\lambda)$ is even because $\lambda\in K^*$ and that, in the split case, we obtain $(\valp(\lambda), \valp(\lambda))$ because $\lambda$ acts by entrywise multiplication. This concludes the proof.
\end{proof}
The following fact follows easily from what we have done in the above proof; we isolate it due to its relevance.
\begin{lemma}\label{lem: equal type an vertex implies equal}
    Let $I$  and $J$ two (principal, rank 2) ideals of $\cO_n$ with $I\sim J$. Then $I = J$ if and only if $\varepsilon(I) = \varepsilon(J)$
\end{lemma}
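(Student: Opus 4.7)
The plan is to leverage directly the setup established in the proof of Lemma~\ref{lem: congruence of types} rather than starting from scratch. The forward implication is immediate: the type of a principal ideal is well defined (independent of the choice of generator), so $I = J$ certainly forces $\varepsilon(I) = \varepsilon(J)$. Hence the real content lies in the converse, and this is where I would spend essentially all the effort.

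For the converse, I would begin by recalling, as in the previous lemma, that $I \sim J$ in the building means precisely that there exists $\lambda \in K^*$ with $I = \lambda J$. The previous proof showed the identity
\begin{equation*}
    \varepsilon(I) - \varepsilon(J) = (\val_{L_i}(\lambda))_{i = 1,\ldots, g},
\end{equation*}
and this is the key formula to reuse. The assumption $\varepsilon(I) = \varepsilon(J)$ then forces the vector on the right to vanish, i.e. $\val_{L_i}(\lambda) = 0$ for each $i$.

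Next I would translate this vanishing into the statement $\lambda \in \cO_K^*$. In the unramified and ramified field cases, $\val_{L}(\lambda) = 0$ means $\lambda \in \cO_L^*$, and since $\lambda \in K^*$ already we get $\lambda \in \cO_L^* \cap K^* = \cO_K^*$. In the split case $\lambda$ is embedded diagonally, so $\val_p(\lambda) = 0$ in each factor is equivalent to $\lambda \in \cO_K^*$. In every case, $\lambda \in \cO_K^* \subseteq \cO_n^*$, hence $\lambda$ (together with $\lambda^{-1}$) stabilizes the ideal $J$, giving $\lambda J = J$ and therefore $I = J$.

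I do not expect a genuine obstacle here: the lemma is essentially the sharp form of the congruence already proved, and the argument amounts to observing that the only ambiguity $\lambda \in K^*$ allowed by the homothety relation $I = \lambda J$ is killed as soon as the type difference $(\val_{L_i}(\lambda))_i$ is zero. The only place requiring mild care is the split case, where one must keep track of the fact that both components of $\lambda$ have the same valuation because $\lambda$ comes from $K^*$; this is precisely the same observation used in the earlier lemma, so no new ingredient is needed.
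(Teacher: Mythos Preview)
Your proposal is correct and follows essentially the same approach as the paper: both reduce to the observation that, via the relation $\varepsilon(I)-\varepsilon(J) = (\val_{L_i}(\lambda))_i$ from the proof of Lemma~\ref{lem: congruence of types}, equality of types forces $\lambda \in \cO_K^*$, whence $I = \lambda J = J$. The paper's version is just terser, omitting the case-by-case verification that $\val_{L_i}(\lambda)=0$ for all $i$ implies $\lambda \in \cO_K^*$, which you spell out explicitly.
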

\begin{proof}
    Only the backwards implication needs to be proven. Indeed, if $\varepsilon(I) = \varepsilon(J)$, then by the proof of Lemma \ref{lem: congruence of types}, we have that $\lambda$ is a unit in $\cO_K^*$. Since multiplication by units does not change the ideal, we conclude that
    $
        I = \lambda J = J,
    $ as desired
\end{proof}

\begin{definition}
    Let $I$ and $J$ be two (principal, rank 2) ideals of $\cO_n$. We say $I\le J$ if and only if \begin{itemize}
        \item $I\sim J,$ and 
        \item $\varepsilon(J) - \varepsilon(I)$ has nonnegative entries
    \end{itemize}
\end{definition}

\begin{proposition}\label{prop: source type}
    Let $I$ and $J$ be two (principal, rank $2$) ideals of $\cO_n$. The following statements hold. \begin{description}
        \item[(a)] The relation $\le$ defines a partial order on the set of (principal, rank 2) ideals of $\cO_n$.
        \item[(b)] Let $x\in\scR_n$ be a vertex. When restricted to the ideals $I\sim x$, $\leq$ becomes a total order.
        \item[(c)] Let $x\in\scR_n$ be a vertex. Then $\leq$ has a unique minimum element.
    \end{description}
\end{proposition}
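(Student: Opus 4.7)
The proof is mostly a direct exploitation of the two preceding lemmas, together with the explicit description of the translation vector $\vec{e}$ in our three cases, where every entry of $\vec{e}$ is a positive integer. My plan is to handle the three parts in the order stated, as each builds cleanly on the previous.

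For part (a), I will verify the three axioms of a partial order on the set of (principal, rank $2$) ideals of $\cO_n$. Reflexivity is immediate, since $I\sim I$ and $\varepsilon(I)-\varepsilon(I)=0$ has nonnegative entries. For transitivity, if $I\le J$ and $J\le K$, then $I\sim J\sim K$ (equivalence of vertices in the tree is transitive), and the difference $\varepsilon(K)-\varepsilon(I)=(\varepsilon(K)-\varepsilon(J))+(\varepsilon(J)-\varepsilon(I))$ is a sum of vectors with nonnegative entries. The only nontrivial axiom is antisymmetry: if $I\le J$ and $J\le I$, then $\varepsilon(I)-\varepsilon(J)$ has both nonnegative and nonpositive entries, forcing $\varepsilon(I)=\varepsilon(J)$. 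This is precisely the hypothesis of Lemma~\ref{lem: equal type an vertex implies equal}, which yields $I=J$.

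For part (b), the key input is Lemma~\ref{lem: congruence of types}: when $I\sim J$, the difference $\varepsilon(J)-\varepsilon(I)$ is an integer multiple of the translation vector $\vec{e}$. Inspecting the three cases listed after Definition~\ref{def: ramification vector}, one sees $\vec{e}\in\{(1),(2),(1,1)\}$, so in every case all entries of $\vec{e}$ are strictly positive. Consequently, writing $\varepsilon(J)-\varepsilon(I)=k\vec{e}$ for some $k\in\Z$, the sign of $k$ decides comparability: if $k\ge 0$ then $I\le J$, and if $k\le 0$ then $J\le I$. Together with part (a), this gives a total order on the ideals equivalent to $x$.

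For part (c), I will use the fact that types take values in $\Z_{\ge 0}$ in the nonsplit case and in $\Z_{\ge 0}^2$ in the split case, since every generator of an ideal lies in $\cO_L$. Fix any $I_0\sim x$ with type $\varepsilon_0$. By part (b) together with Lemma~\ref{lem: congruence of types}, every ideal $J\sim x$ has type of the form $\varepsilon_0+k\vec{e}$ for some $k\in\Z$, and comparison under $\le$ corresponds to comparison of the integers $k$. The set of such $k$ for which $\varepsilon_0+k\vec{e}$ is actually realised as $\varepsilon(J)$ for some $J\sim x$ is bounded below, because the entries of $\varepsilon_0+k\vec{e}$ must remain nonnegative while the entries of $\vec{e}$ are strictly positive. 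The well-ordering of $\Z$ then produces a smallest admissible $k$, whose associated ideal is the unique minimum under $\le$; uniqueness follows from Lemma~\ref{lem: equal type an vertex implies equal} as in part (a). The only step that requires care is checking that every candidate type $\varepsilon_0+k\vec{e}$ can indeed be refined to an actual ideal when needed, but for the minimum this is automatic: it is exhibited by any representative of the smallest achievable $k$. I expect no serious obstacle; the most delicate point is simply the bookkeeping in part (c) to confirm that boundedness below of $\varepsilon$ in $\Z_{\ge 0}^{g}$ translates correctly into boundedness below of the integer parameter $k$.
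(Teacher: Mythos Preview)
Your proposal is correct and follows essentially the same approach as the paper's proof: reflexivity and transitivity are immediate, antisymmetry comes from Lemma~\ref{lem: equal type an vertex implies equal}, totality on a fixed vertex comes from Lemma~\ref{lem: congruence of types} together with the positivity of the entries of $\vec{e}$, and existence of a minimum follows by embedding the types at $x$ into $\Z$ via $\varepsilon_0+k\vec{e}\mapsto k$ and using that nonnegativity of types bounds $k$ below. Your treatment is slightly more explicit than the paper's (e.g., spelling out transitivity and the well-ordering step), but the logic is identical.
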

\begin{proof}
    That these conditions constitute a reflexive and transitive relation is straightforward. Lemma~\ref{lem: equal type an vertex implies equal} implies it is antisymmetric. This proves (a)

    If we now restrict to those ideals that are equivalent to a fixed vertex, then, by Lemma \ref{lem: congruence of types} and its proof, we conclude we can always compare them. Indeed, its difference is an integer multiple of $\vec{e}$, which is a vector with positive entries. Thus one of 
    \begin{equation*}
         \varepsilon(J) - \varepsilon(I),\,  \varepsilon(I) - \varepsilon(J),
    \end{equation*}
    has nonnegative entries. This proves $(b)$.

    Finally, to prove that a minimum exist, we notice that given any ideal $I\sim x$, the types of the other ideals that are also equivalent to $x$ lie in the set
    \begin{equation}\label{eq: potential types}
        \{\varepsilon(I) + k\vec{e},\, k\in\mathbb{Z}\},
    \end{equation}
    by Lemma \ref{lem: congruence of types}. However, because the entries of ideal types are nonnegative, the actual ideal types that do appear in \eqref{eq: potential types} have a lower bound. 
    
    Furthermore, by definition, the sign of $k$ determines whether these types are smaller or bigger (or equal if $k = 0$). In particular,
    \begin{equation*}
        \varepsilon(I) + k\vec{e} \longrightarrow k,
    \end{equation*}
    embeds this total order into the one of $\mathbb{Z}$. Since there is a lower bound, $(c)$ follows.
\end{proof}

With the above results, we are now allowed to give the following definition and structural result.

\begin{definition}\label{def: source}
    Let $x\in\scR_n$. The minimum type of the total order proven in part (b) and (c) of Proposition \ref{prop: source type} will be called the \defbf{type source of vertex $x$} or simply \defbf{the source at $x$}. We will denote it by $\omega(x)$.
\end{definition}
\begin{theorem}\label{thm:structure of types at x}
    Let $x\in\scR_n$ be a vertex with source $\omega(x)$, then
    \begin{equation*}
        \varepsilon(x) = \{\omega(x) + k\vec{e} \mid k = 0, 1, 2, ...\}.
    \end{equation*}
    In other words, if $\omega(x)$ is the \say{minimum} type represented by the vertex $x$, then $x$ represents ideals of any type \say{bigger} than $\omega(x)$ under $\leq$.
\end{theorem}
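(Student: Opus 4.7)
The plan is to prove the set equality by two inclusions, both of which flow naturally from what has been established in Lemma \ref{lem: congruence of types} and Proposition \ref{prop: source type}.

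For the inclusion $\varepsilon(x) \subseteq \{\omega(x) + k\vec{e} \mid k \ge 0\}$, fix any principal ideal $I \subseteq \cO_n$ with $I \sim x$, and let $I_0$ denote a fixed principal ideal realizing the source, i.e.\ with $\varepsilon(I_0) = \omega(x)$. Lemma \ref{lem: congruence of types} applied to $I$ and $I_0$ yields $\varepsilon(I) - \omega(x) = k\vec{e}$ for some integer $k$. Because $\omega(x)$ is the minimum element of the total order $\le$ on ideals equivalent to $x$ (Proposition \ref{prop: source type}(b,c)), we must have $k \ge 0$.

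For the reverse inclusion, the strategy is to construct, for each $k \ge 0$, a concrete principal ideal whose type is $\omega(x) + k\vec{e}$. The natural candidate is $I_k := p^k I_0$. This is indeed a principal ideal of $\cO_n$ with generator $p^k\alpha_0$, where $\alpha_0$ generates $I_0$; and $I_k \sim I_0 \sim x$, since $I_k$ and $I_0$ are homothetic lattices and hence represent the same vertex in the building. The content of the argument is to check that $\varepsilon(I_k) = \omega(x) + k\vec{e}$ directly from the definitions: the valuation vector of $p$ in $L$ equals $\vec{e}$ by definition of the translation vector, namely $\val_\pi(p) = 2$ in the ramified case, $\val_p(p) = 1$ in the unramified case, and $(\val_p(p), \val_p(p)) = (1,1)$ in the split case. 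Additivity of valuation then gives $\varepsilon(p^k \alpha_0) = k\vec{e} + \varepsilon(\alpha_0) = \omega(x) + k\vec{e}$.

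The expected main obstacle is essentially just bookkeeping: confirming that the chosen scaling element $p$ produces exactly the shift $\vec{e}$ uniformly across all three cases. The apparent asymmetry between the ramified case (where one might worry about using $\pi$ instead of $p$) and the other two is precisely what motivated the definition of $\vec{e}$ via ramification degrees: multiplying by $p$ always shifts types by $\vec{e}$, which is why the theorem can be stated uniformly. Once this is noted, both inclusions close up and the set identity follows.
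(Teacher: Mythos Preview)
Your proposal is correct and follows essentially the same approach as the paper's proof: both establish $\supseteq$ by multiplying a source ideal by successive powers of $p$ (noting this shifts the type by $\vec{e}$), and both obtain $\subseteq$ from Lemma~\ref{lem: congruence of types} together with the minimality of $\omega(x)$. Your write-up is slightly more explicit in verifying case-by-case that the valuation vector of $p$ equals $\vec{e}$, but the argument is the same.
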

\begin{proof}
    Let $I$ be an ideal that achieves the minimum at $x$. Notice that multiplication by $p$ produces another principal ideal, also represented by $x$, and with
    \begin{equation*}
        \varepsilon(pI) = \vec{e} + \varepsilon(I) = \vec{e} + \omega(x).
    \end{equation*}
    With this we see that by successive multiplication by $p$, we can produce all types in $\{\omega(x) + k\vec{e} \mid k = 0, 1, 2, ...\}$. Thus
    \begin{equation*}
        \varepsilon(x) \supseteq \{\omega(x) + k\vec{e} \mid k = 0, 1, 2, ...\}.
    \end{equation*}
    On the other hand, Lemma \ref{lem: congruence of types} and Definition~\ref{def: source}, prove 
    \begin{equation*}
        \varepsilon(x) \subseteq \{\omega(x) + k\vec{e} \mid k = 0, 1, 2, ...\},
    \end{equation*}
    concluding the proof.
\end{proof}

\subsection{The equality of zeta functions}

In order to make our final result easy to explain we need one last definition, which is justified by our work in the previous section.

\begin{definition}\label{def: coordinates}
    Let $I\subseteq\cO_n$ be a (principal, rank 2) ideal. We define its \defbf{vertex-type coordinates} as the pair $(x, \varepsilon(I))$ where $x\in\scR_n$ is the vertex such that $I\sim x$.
\end{definition}

The equivalence of generating functions we desire to prove essentially follows from reading the set of coordinates in two different ways. However, before we can do this, we must connect the source at $x$ with some property of $x$ as a vertex of the impacted building. The next lemma gives this connection.

\begin{lemma}\label{lem: contribution is distance}
    Let $x\in\scR_n$, then
    \begin{equation}\label{eq: contribution is distance}
        d(x, \cO_n) = c(\omega(x)),
    \end{equation}
    where $c(\omega(x))$ is the type \textit{contribution} of $\omega(x)$.
\end{lemma}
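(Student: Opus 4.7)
The plan is to prove this lemma by case analysis on the three extensions $L/K$. In each case, Theorems~\ref{thm: Ideals Ram Case},~\ref{thm: Ideals Unram Case}, and~\ref{thm: Ideals split Case} write $x = g\cdot\cO_n$ explicitly, with $g$ of the form $u_\alpha$, $u_\alpha\pi$, or $u_\alpha\cdot(1,p^m)$ (where $\alpha\in I(n)$ and $m\in\Z$). Both the distance $d(x,\cO_n)$ and the source $\omega(x)$ can be extracted directly from $g$: the former through the \emph{fixing depth} of $u_\alpha$, i.e.\ the integer $d$ characterized by $n-d$ being the position of the first nonzero entry of $\alpha$ (with $d=0$ if $\alpha=0$), and the latter through the smallest $k\ge 0$ for which $p^k g\in\cO_n$.

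For the geodesic side, I would first observe that $u_\alpha\in\cO_{n-d}^*\setminus\cO_{n-d+1}^*$, so $u_\alpha$ fixes the way-out vertices $\cO_0,\ldots,\cO_{n-d}$ but not $\cO_{n-d+1}$, and by commutativity it fixes $\pi\cO_j$ for $j\le n-d$ in the ramified case and every $\widetilde{\cO_m}$ in the split case (Proposition~\ref{prop: unit action split case}). Tracking where the geodesic from $x$ rejoins the way out, or the basin apartment, then yields $d(x,\cO_n)=2d$ in the ramified-even and unramified cases; $d(x,\cO_n)=2n+1$ in the ramified-odd case, because the geodesic must cross the basin edge $\cO_0\leftrightarrow\pi\cO_0$; and in the split case $d(x,\cO_n)=2d$ if $m=0$, but $d(x,\cO_n)=2n+|m|$ if $m\ne 0$, since the unique tree geodesic must traverse the basin apartment from $\widetilde{\cO_0}$ to $\widetilde{\cO_m}$.

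For the source side, a direct $\valp$ calculation using $\cO_n=\cO_K\oplus p^n\Delta\cO_K$ and the expansion $u_\alpha=a+bp^{n-d}\Delta$ with $a\in\cO_K^*$ determines the minimum $k$: namely $k=d$ in the ramified-even, unramified, and split ($m=0$) subcases; $k=n$ in the ramified-odd subcase (after accounting for the extra $\pi$); and $k=n$ for $m>0$ or $k=n+|m|$ for $m<0$ in the split subcase with $m\ne 0$. The corresponding sources are $2d$, $d$, $(d,d)$, $2n+1$, $(n,n+m)$, and $(n+|m|,n)$; applying $c$ (equal to $\omega$ ramified, $2\omega$ unramified, and $\omega_1+\omega_2$ split) reproduces the geodesic lengths exactly. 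The main obstacle will be the split case with $m\ne 0$: one must rule out any shortcut from leading zeros in $\alpha$, which on the arithmetic side reduces to verifying $\valp(u_{\alpha,1}\sigma_1-u_{\alpha,2}p^m\sigma_2)=\min(0,m)$ for every $\sigma\in\cO_n^*$, and on the geometric side to the fact that the branches of the tree off $\widetilde{\cO_0}$ and $\widetilde{\cO_m}$ are disjoint whenever $m\ne 0$.
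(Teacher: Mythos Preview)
Your proposal is correct and follows essentially the same route as the paper's proof: both arguments parametrize $x\in\scR_n$ via the descriptions in Theorems~\ref{thm: vertices at distance n1}, \ref{thm: vertices at distance n}, \ref{thm: low ideals UNRAM}, and \ref{thm: location of ideals split case}, compute the geodesic $d(x,\cO_n)$ by tracking how the path passes through the basin, and compute $\omega(x)$ by finding the minimal generator in $\cO_n$. The only organizational difference is that the paper partitions vertices according to whether the source $\omega(x)$ is a low type or a high type, whereas you partition by the shape of $g$ together with the fixing depth $d$ of $u_\alpha$; your parameter $d$ then runs uniformly from $0$ to $n$ and absorbs the paper's low/high dichotomy (the cases $d<n$ versus $d=n$ in your ramified-even, unramified, and split $m=0$ subcases correspond exactly to the paper's low-source and threshold-high-source cases). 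Your explicit determination of the minimal $k$ with $p^kg\in\cO_n$ is a direct computation equivalent to the paper's more structural observation that one cannot subtract $\vec{e}$ from $\omega(x)$ without violating the ideal threshold. The concern you flag in the split $m\neq 0$ subcase is real but resolves exactly as you indicate: the fixing depth of $u_\alpha$ does not shorten the geodesic because the way out (a branch off $\widetilde{\cO_0}$) and the branch off $\widetilde{\cO_m}$ are disjoint in the tree, and on the arithmetic side the valuation identity $\val_p(u_{\alpha,1}\sigma_1-u_{\alpha,2}p^m\sigma_2)=\min(0,m)$ holds because $\sigma_1,\sigma_2\in\cO_K^*$ for any $\sigma\in\cO_n^*$.
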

\begin{proof}
   We have seen that for each case all the vertices in $\scR_n$ represent high ideals. However, not all of them represent low ideals. Fortunately, the latter have been described in Theorem \ref{thm: vertices at distance n}, Theorem \ref{thm: low ideals UNRAM}, and Theorem \ref{thm: location of ideals split case}. This description gives that reaching this vertex $x$ from $\cO_n$ requires to go first to $\cO_{n-d}$ and from there to $x$. 
   
   Let $x$ be a vertex that represents low ideals and let $I$ be a low ideal represented by it with the lowest contribution; thus, $\omega(x) = \omega(I)$. Furthermore, we know this $I$ is unique. Concretely,
   \begin{equation*}
       I \sim u_{\alpha}\cdot\cO_n = x,
   \end{equation*}
   for some $\alpha\in I_d(n)$ with $d$ minimal, and in particular, smaller than $n$. In this situation, the two paths mentioned in the above paragraph are disjoint (otherwise we could decrease $d$ because the paths would also share $\cO_{n-(d-1)}$). Consequently,
   \begin{equation*}
       d(V, \cO_n) = d(x, \cO_{n-d}) + d(\cO_{n-d}, \cO_n) = d + d = 2d.
   \end{equation*}
    However, $I$ is constructed in a very specific way from this information. Concretely,
    \begin{equation*}
        I = {p}^{d}u_{\alpha}\cO_{n}
    \end{equation*}
    Notice that this ideal is indeed a principal ideal \textit{of $\cO_n$} because $u_{\alpha}\in\cO_{n-d}^*$ and thus the corresponding power of $p$ by which it gets multiplied guarantees that the \say{representative} is an element of $\cO_n$. We now compute the type contribution of this ideal. We have
    \begin{equation*}
        \omega(x) = \omega(I) = \begin{cases}
            f(de) & \mbox{in the nonsplit case,}\\
            d + d & \mbox{in the split case.}
        \end{cases}
    \end{equation*}
    Using that $ef = 2$ we get that in all cases
    \begin{equation*}
        \omega(x) = 2d = d(x, \cO_n),
    \end{equation*}
    as desired. This settles the situation where $x$ represents low ideals. 

    Let us now take $x$ to be a vertex that \textit{does not} represent low ideals. Let $I$ be an ideal, necessarily high, that is represented by $x$ and with the lowest possible type contribution. We know that
    \begin{equation}\label{eqn: formula for I}
        I = u_{\alpha}{\pi}^{\varepsilon(I)}\cO_n,  \;\;\alpha\in I(n)
    \end{equation}
   where in the split case ${\pi}^{t_n + \varepsilon(I)} = (p^{n + \alpha_1}, p^{n + \alpha_2})$. 
   
   In all these cases, the geodesic from $\cO_n$ to $x$ goes through $\cO_0$. Specifically, this geodesic has three parts: one from $\cO_n$ to $\cO_0$, another from $\cO_0$ to ${\pi}^{\varepsilon(I)}\cO_0$, and a final one from ${\pi}^{\varepsilon(I)}\cO_0$ to $x$. The first and last segments both have length $n$. The middle segment occurs entirely within the basin and its length is determined by ${\pi}^{\varepsilon(I)}$. As we are dealing with high ideals, the type has to be \say{above} the ideal threshold. 
   
   In the field extension case this means $\varepsilon(I)\ge ne$, while in the split case it means each entry is at least $n$. Writing this as $\varepsilon(I) = t_n + (m_1, m_2)$ in the split case and as $\varepsilon(I) = t_n + m$ otherwise, we see
   \begin{equation}\label{eqn: formula for omega vertex}
       \omega(x) = \omega(I) = \begin{cases}
           f(ne + m) \\
           n + m_1 + n + m_2 
       \end{cases}
       = 2n + \begin{cases}
           fm & \mbox{in the nonsplit case}\\
            m_1 + m_2 & \mbox{in the split case}.
       \end{cases}
   \end{equation}
   To conclude we must use the fact that $I$ produces the minimal type at the vertex $x$. Notice that equation \eqref{eqn: formula for I} for $I$ represents the same vertex whenever we can \say{factor powers of $p$} from $\pi^{m}$ or $\pi^{(m_1, m_2)}$. For example, in the unramified case, if $m \neq 0$, then as vertices
   \begin{equation}
        u_{\alpha}p^n\cdot{p}^{m}\cO_n \sim u_{\alpha}p^n\cdot{p}^{m-1}\cO_n,
    \end{equation}
    but as ideals the right hand one has smallest type. This contradicts the minimality of the type. Hence, we necessarily have $m = 0$.

    In general, by Proposition \ref{prop: source type}, we know that any displacement in the type by $\vec{e}$ produces the same vertex. In particular, we must be unable to subtract $\vec{e}$. This means in the ramified case that $m \in \{0,1\}$, in the unramified case that $m = 0$, and in the split case that $m \in \{ (0, m_2), (m_1, 0)\}$.

    In the unramified case, where $m = 0$, equation \eqref{eqn: formula for omega vertex} implies
    \begin{equation*}
        \omega(x) = 2n = d(\cO_n, \cO_0) + d(\cO_0, x).
    \end{equation*}
    In the ramified case, the geodesic from $\cO_n$ to $x$ has length
    \begin{equation*}
        d(x, \cO_n) = d(\cO_n, \cO_0) + d(\cO_0, {\pi}^{\varepsilon(I)}\cO_0) + d({\pi}^{\varepsilon(I)}\cO_0, x) = 2n + d(\cO_0, \pi^m\cO_0).
    \end{equation*}
    We finally notice that $d(\cO_0, \pi^m\cO_0) \in\{0,1\}$ according to whether $m$ is even or odd. The latter condition is equivalent to whether $I$ is of even or odd type, which by comparing values yields
    \begin{equation*}
        d(x, \cO_n)  = \omega(x).
    \end{equation*}
    Finally, we deal with the split case. The arguments are the same as above. The term \say{+ 2n} in the contribution matches the distances $d(\cO_n, \cO_0)$ and $d({\pi}^{\varepsilon(I)}\cO_0, x)$. Thus, we only need to explain the term $d(\cO_0, {\pi}^{\varepsilon(I)}\cO_0)$. In our current case, since one of $m_1$ or $m_2$ is zero, what we have is that ${\pi}^{\varepsilon(I)}\cO_0 = \widetilde{\cO_{-m_1}}$ or $\widetilde{\cO_{m_2}}$, according to whether $m_1\neq 0$ or $m_2\neq 0$, respectively. Thus
    \begin{equation*}
        d(\cO_0, {\pi}^{\varepsilon(I)}\cO_0) = m_i.
    \end{equation*}
    We see this matches the missing term from the sum giving $\omega(x)$. Thus, once more
    \begin{equation*}
        d(x, \cO_n)  = \omega(x).
    \end{equation*}
    This concludes the proof.
\end{proof}

We finally prove our main result.

\begin{theorem}\label{thm: principal zeta = vertex zeta}
    Let $L/K$ be one of the above considered extensions (i.e. unramified, ramified, split extensions) and $\scB$ the corresponding impacted building (i.e. $\scB_U, \scB_R$, $\scB_S$). Then we have the equality
    \begin{equation}\label{eq: principal zeta = vertex zeta}
        \zeta_{\cO_n}^P(s) = \zeta_{\scB, \cO_n}(q^{-s}),
    \end{equation}
    where $\zeta_{\scB, \cO_n}(X)$ is the generating function from the vertex associated to $\cO_n$ for the corresponding impacted building. 
\end{theorem}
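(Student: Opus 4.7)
The plan is to rewrite both sides as sums over the vertices of $\scR_n$ and then match term by term. On the ideal side, Theorem~\ref{thm: ideals in the building} tells us that every principal rank $2$ ideal of $\cO_n$ is equivalent to a unique vertex $x\in\scR_n$, so we may partition
\begin{equation*}
    \zeta_{\cO_n}^P(s) = \displaystyle\sum_{x\in\scR_n}\displaystyle\sum_{I\sim x}\dfrac{1}{[\cO_n:I]^s}.
\end{equation*}
Fix $x\in\scR_n$. By Theorem~\ref{thm:structure of types at x}, the set $\varepsilon(x)$ of types of principal ideals equivalent to $x$ is exactly $\{\omega(x)+k\vec{e}: k\ge 0\}$, and by Lemma~\ref{lem: equal type an vertex implies equal} each such type corresponds to a unique ideal. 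Combining this with \eqref{eq: index value contribution} rewrites the inner sum as $\sum_{k\ge 0}q^{-s\,c(\omega(x)+k\vec{e})}$.

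Next, I would perform the (case-by-case but one-line) computation that the type contribution shifts linearly along multiples of $\vec{e}$:
\begin{equation*}
    c(\omega(x)+k\vec{e}) = c(\omega(x))+2k.
\end{equation*}
In the nonsplit case this is $f(\omega+ke)-f\omega = fek = 2k$ because $ef=2$; in the split case it is $(\omega_1+k)+(\omega_2+k)-(\omega_1+\omega_2)=2k$. Feeding this into the inner geometric series and invoking Lemma~\ref{lem: contribution is distance} to replace $c(\omega(x))$ by $d(x,\cO_n)$, we obtain
\begin{equation*}
    \zeta_{\cO_n}^P(s) = \displaystyle\sum_{x\in\scR_n} q^{-s\,d(x,\cO_n)}\displaystyle\sum_{k\ge 0}q^{-2sk} = \dfrac{1}{1-q^{-2s}}\displaystyle\sum_{x\in\scR_n}q^{-s\,d(x,\cO_n)}.
\end{equation*}

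On the building side, Proposition~\ref{prop: geodesic non geodesic} gives $\zeta_{\cO_n}(X) = \widetilde{\zeta}_{\cO_n}(X)/(1-X^2)$, and by the proof of that proposition $\widetilde{\zeta}_{\cO_n}(X)=\sum_{x\in\scR_n}X^{d(x,\cO_n)}$. Specializing $X=q^{-s}$ yields exactly the same expression, so the two sides agree and \eqref{eq: principal zeta = vertex zeta} follows. The main obstacle here is not any step of the present proof, which is essentially a matching of two indexings; the real work has already been absorbed into Theorem~\ref{thm: ideals in the building}, Theorem~\ref{thm:structure of types at x} and, most crucially, Lemma~\ref{lem: contribution is distance}, which is what makes the translation between \emph{arithmetic} contributions of types and \emph{combinatorial} distances in the impacted building possible.
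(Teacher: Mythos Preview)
Your proposal is correct and follows essentially the same approach as the paper. The paper frames the argument as computing a double sum $\sum_{(x,\omega)\in VT} q^{-c(\omega)s}$ in two different orders (over vertices first, then over types first), whereas you start directly from $\zeta_{\cO_n}^P(s)$, partition over vertices, and then match with the building side via Proposition~\ref{prop: geodesic non geodesic}; but the computational content---the bijection between ideals at $x$ and $\{\omega(x)+k\vec{e}:k\ge 0\}$, the shift $c(\omega+k\vec{e})=c(\omega)+2k$, and the use of Lemma~\ref{lem: contribution is distance}---is identical.
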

\begin{proof}
    We begin by noticing that a given \textit{potential} vertex/type-coordinate $(x, \omega)$ singles out at most one ideal. Indeed, Lemma \ref{lem: equal type an vertex implies equal} implies this ideal is unique if it exists. We conclude that summing over (principal, rank 2) ideals is the same as summing over vertex-type coordinates. Let us denote by $VT$ the set vertex type coordinates that do occur. 

    Our proof consists on rewriting in two ways the sum
    \begin{equation*}
        \displaystyle\sum_{(x, \omega)\in VT} q^{-c(\omega)s}.
    \end{equation*}
    This is an integral over a discrete set where the point $(x, \omega)$ has  measure $q^{-c(\omega)s}$ for a fixed positive $q$ and a fixed positive real $s$ large enough such that both series converge. If we can prove the equality in the set of real numbers, the functions are equal also for the complex values where both are defined due to uniqueness of analytic continuation. In particular, we will perform iterated integration in any order. 

    Let us first integrate over the vertex set. For each $x\in\scR_n$ the inner integral will be over the coordinates of the the set of ideals equivalent to that vertex. Theorem \ref{thm:structure of types at x} shows that these coordinates are
    \begin{equation*}
        (x, \omega(x)), (x, \omega(x) + \vec{e}), (x, \omega(x) + 2\vec{e}),...
    \end{equation*}
    In turn, the ideals specified by these coordinates contribute
    \begin{equation*}
        c(\omega(x)), c(\omega(x)) + 2, c(\omega(x)) + 4, \cdots
    \end{equation*}
    Indeed, the type contribution for a given possible type $\omega$ is
    \begin{equation*}
        c(\omega_1,...,\omega_g) = f_1\omega_1 + ... + f_g\omega_g,
    \end{equation*}
    where $f_1,..., f_g$ are the inertia degrees of the extensions $L_1,..., L_g$. In particular, because $e_if_i = [L_i: K]$, we have
    \begin{equation*}
        c(\vec{e}) = f_1e_1 + \cdots + f_ge_g = 2.
    \end{equation*}
    As we can see, because the type contribution is an additive function, our claim follows. Finally, using Lemma \ref{lem: contribution is distance}, we get these contributions are
    \begin{equation*}
        d, d + 2, d + 4, \cdots
    \end{equation*}
    where $d = d(x, \cO_n)$. We thus conclude that the iterated integral, obtained by integrating first over the vertex set, is
    \begin{equation}\label{eq: integrating first vertex}
        \displaystyle\sum_{x\in\scR_n}\left( q^{-d(x, \cO_n)} + q^{-(d(x, \cO_n) + 2)} + q^{-(d(x, \cO_n) + 4)} + \cdots\right).
    \end{equation}
    The generating function from the vertex $\cO_n$ is defined as
    \begin{equation*}
        \displaystyle\sum_{d = 0}^{\infty}\left\{x\in \scR_{n} \mid x \mbox{ can be reached by a path of length } d \mbox{ from } \cO_n  \right\}X^d
    \end{equation*}
    If $x$ can be reached by a path of length $d$ then by adding an \say{unnecessary} jump to a neighboring vertex and back (just as in Proposition~\ref{prop: geodesic non geodesic}), we see it can also be reached with a path of length $d + 2$. Thus, if $x\in\scR_n$ is a vertex at exact distance $d$ from $\cO_n$ then its contribution to the right hand side is
    \begin{equation*}
        X^d + X^{d + 2} + X^{d + 4} + \dots...,
    \end{equation*}
    which upon evaluation at $q^{-s}$ becomes the value \eqref{eq: integrating first vertex}. We conclude that
    \begin{equation*}
        \displaystyle\sum_{(x, \omega)\in VT} q^{-c(\omega)} = \zeta_{\scB, \cO_n}(q^{-s}).
    \end{equation*}
    We will now integrate first over the types. Doing so transforms the integral into
    \begin{equation*}
        \displaystyle\sum_{\omega}\displaystyle\sum_{\varepsilon(I)=\omega}q^{-c(\varepsilon(I))s}.
    \end{equation*}
    We know that the index $[\cO_n:I]$ only depends on the type. Concretely, if $c(\omega)$ is the \textit{type contribution}, then
    \begin{equation*}
        [\cO_n:I] = q^{-c(\varepsilon(I))}.
    \end{equation*}
    That is,
    \begin{equation*}
        \displaystyle\sum_{\omega}\displaystyle\sum_{\varepsilon(I)=\omega}q^{-c(\varepsilon(I))s} =\displaystyle\sum_{\omega}\displaystyle\sum_{\varepsilon(I)=\omega}\dfrac{1}{[\cO_n: I]^s}
    \end{equation*}
    As we have said, summing over $VT$ is the same as summing over the ideals, we conclude that the above integral is
    \begin{equation*}
        \displaystyle\sum_{I}\dfrac{1}{[\cO_n: I]^s}.
    \end{equation*}
    We can do this because we have eliminated the dependence on the type from the integrand. We recognize this as $\zeta_{\cO_n}^P(s)$. This concludes the proof.
\end{proof}



\bibliographystyle{acm}
\bibliography{Bibliography}

\end{document}